\titleformat*{\section}{\large\bfseries}
\newtheorem{theorem}{Theorem}
\newtheorem{proposition}[theorem]{Proposition}
\newtheorem{lemma}[theorem]{Lemma}
\newtheorem{corollary}[theorem]{Corollary}
\newtheorem{definition}[theorem]{Definition}
\newtheorem{assumptions}[theorem]{Assumptions}
\newtheorem{example}[theorem]{Example}
\newtheorem{remark}[theorem]{Remark}
\begin{document}

\title{\Large Measure-transmission metric and stability of structured population models}
\author{G.~Jamr\'oz\\
\small
Institute of Mathematics, Polish Academy of Sciences, \'Sniadeckich 8, 00-956 Warszawa,\\ \small Institute of Applied Mathematics and Mechanics, University of Warsaw, Banacha 2, 02-097 Warszawa\\
\small e-mail: jamroz@mimuw.edu.pl}
\maketitle

\begin{abstract}
In [Gwiazda, Jamr\'oz, Marciniak-Czochra 2012] a framework for studying cell differentiation processes based on measure-valued solutions of transport equations was introduced. Under application of the so-called measure-transmission conditions it enabled to describe processes involving both discrete and continuous transitions. This framework, however, admits solutions which lack continuity with respect to initial data. In this paper, we modify the framework from  [Gwiazda, Jamr\'oz, Marciniak-Czochra 2012] by replacing the flat metric, known also as bounded Lipschitz distance, by a new Wasserstein-type metric. We prove, that the new metric provides stability of solutions with respect to perturbations of initial data while preserving their continuity in time. The stability result is important for numerical applications.

\end{abstract}

\noindent \textbf{Keywords:} transport equation, measure-valued solutions, metrics on measures, structured population models, cell differentiation, stability\\
\noindent \textbf{AMS MSC 2010 classification:} 28A33, 35F16, 35F31, 92D25

\numberwithin{equation}{section}

\section{Introduction}
\label{Sec_intro}
Cell differentiation process is a biological phenomenon, in which immature cells of living organisms give rise to more mature, i.e. more specialized, ones, see e.g. \cite{GILBERT}. In humans, this process takes place primarily during gestation, childhood and adolescence. During these initial stages of human development  a fertilized egg cell, called zygote, divides and differentiates multiple times, giving eventually rise to mature cells of blood, muscles, skin, brain etc. In some tissues, the process of cell differentiation persists during adulthood. 

For instance, neural stem cells or neural progenitors, which reside in the part of brain called hippocampus, can differentiate (Fig. \ref{Fig_neurogenesis}) to become eventually mature neurons, which has implications for human memory, see e.g. \cite{Neurogenesis, Kriegstein}.
\begin{figure}[htbp]
\begin{center}
\includegraphics[width=10cm]{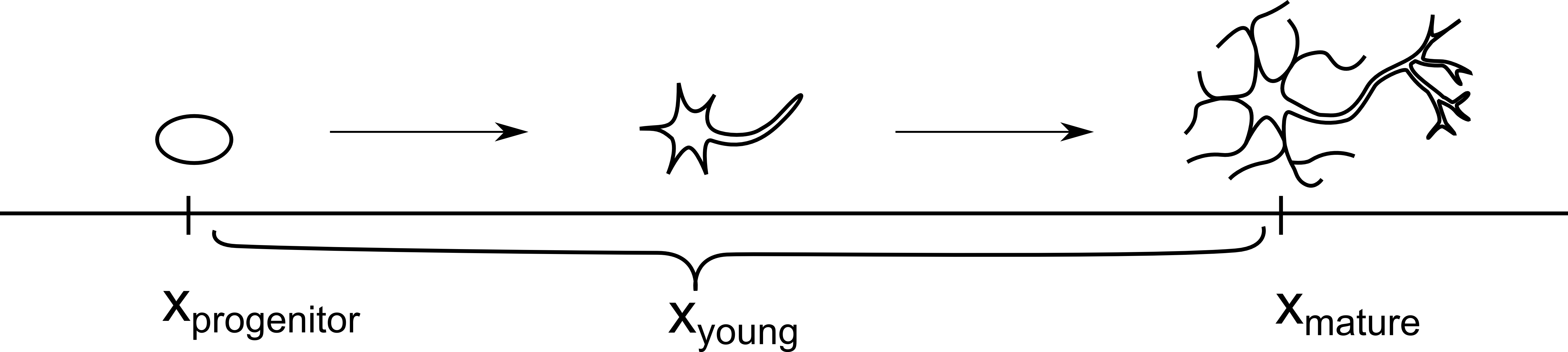}
\caption{Schematic drawing of process of differentiation of neurons in hippocampus. From the discrete state of neural progenitor a cell differentiates to become a young neuron. This continuous phase lasts around four weeks and consists in migration and morphological maturation. Finally, the young neuron reaches the discrete state of maturity.
 }
\label{Fig_neurogenesis}
\end{center}
\end{figure}

Various mathematical models, focusing on different aspects of the process of cell differentiation, and using various mathematical structures, have been proposed in scientific literature. They include modeling differentiation switches via Markov chains or systems of ordinary differential equations (see \cite{GotoKaneko,VillaniSerra,Bodaker}), modeling the inherent stochasticity via branching processes (see e.g. \cite{Corey, Sehl, MacMc}), modeling delays via delay differential equations (see \cite{Adimy0,Arino, Kim} and references therein), modeling spatial dynamics via discrete lattice models or reaction-diffusion equations (see \cite{GranerGlazier, Wang}) and others.

The approach developed in the present paper is called \emph{structured population models}. It consists in tracing populations of cells according to their \emph{maturity level} which is described by a real structure variable $x \in \mathbb{R}$. The order on states $x$ is inherited from $\mathbb{R}$, which means that state $x_2$ is \emph{more differentiated} (i.e. more specialized, more mature) than state $x_1$ iff $x_1<x_2$. This, in turn, means that a cell from state $x_1$ can differentiate into a cell in state $x_2$ yet not vice versa. We distinguish two types of states:
\begin{itemize}
\item \emph{discrete states}, in which cells can stay for a positive period of time (e.g. state of stem cell, state of mature cell),
\item \emph{continuous states}, which cells pass without halting (e.g. the group of states corresponding to maturing neuron).
\end{itemize}
Depending on the topology of the state space we distinguish three basic groups of structured population models of cell differentiation:
\begin{itemize}
\item discrete models, with state space being a finite subset of $\mathbb{R}$ and composed of discrete states only; the dynamics is based on systems of ODEs, see e.g. \cite{Lo, AMC, Nakata, Stiehl},
\item continuous models, with state space being an interval and composed of continuous states only; the evolution of population of cells is then described by a time-dependent density $u(t,x)$ or, more generally, time-depedent positive Radon measure $\mu(t) \in \mathcal{M}(\mathbb{R})$ which evolves according to the transport (balance) equation $\partial_t \mu + \partial_x(g\mu) = p\mu$, see \cite{Adimy,Belair, Colijn, diekmanngetto, Spalding},
\item mixed models, which have both discrete and continuous parts, see \cite{Doumic}. 
\end{itemize}

In \cite{OUR} continuous and mixed models of cell differentiation were embedded into a general framework based on measure-valued solutions of transport equations. We refer to this paper for motivations and further biological background as well as derivation of constituents of the model. Mathematically, framework from \cite{OUR} reads as follows:
\begin{eqnarray}
\partial_t \mu(t) + \partial_x (g_1(v(t))\bold{1}_{x \neq x_i}(x) \mu(t)) &=& p(v(t),x) \mu(t),\label{eq_munonlinold}\\
g_1(v(t))  \frac {D\mu(t)}{D\mathcal{L}^1} (x_i^+) &=& c_i(v(t)) \int_{\{x_i\}} d\mu(t), \label{eq_bcnonlinold} \quad\quad i = 0,\dots,N\\
\mu(0) &=& \mu_0,\label{eq_icnonlinold}
\end{eqnarray}
where $t\in \mathbb R^+$ and  $x\in \mathbb R$.   $x_0 < x_1 < \dots < x_N$ is a finite collection of points in $\mathbb{R}$, which correspond to discrete states. $\bold{1}_{x \neq x_i}$ is equal $1$ if $x \in (x_0,x_1) \cup (x_1,x_2) \cup \dots \cup (x_{N-1},x_N)$ and $0$ otherwise. $\frac {D\mu}{D\mathcal{L}^1}$ denotes the density of measure $\mu$ with respect to the one-dimensional Lebesgue measure and $v(t) := \int_{\{x_N\}} d\mu(t)$ denotes the mass of point $x_N$. The initial datum $\mu_0$ is a Radon measure supported on the interval $[x_0,x_N]$.

Under certain assumptions on coefficients (see \cite[Assumptions 3.2]{OUR}) it was proven that there exists a unique solution 
\begin{equation*}
\mu \in C([0,\infty), (\mathcal{M},\rho_F))
\end{equation*} 
of problem \eqref{eq_munonlinold}-\eqref{eq_icnonlinold}. Here,  $\mathcal{M} = \mathcal{M}(\mathbb{
R})$ is the space of nonnegative Radon measures on $\mathbb{R}$ (see \cite{EVANS_GARIEPY}  for an introduction to measure theory) and $C([0,\infty), (\mathcal{M},\rho_F))$ is the space of continuous functions on $[0,\infty)$ with values in space $\mathcal{M}$  equipped with the flat metric $\rho_F$, which is an adaptation of Wasserstein metric used in the theory of optimal transport, see \cite{VILLANI}. This metric, known also under the name bounded Lipschitz distance, is defined by
\begin{equation}
\label{Eq_defflat}
\rho_F(\mu_1,\mu_2) := \sup_{\psi \in Lip^b(\mathbb{R}), |\psi |\le 1, Lip(\psi) \le 1} \int_{\mathbb{R}}  \psi d(\mu_1 - \mu_2),
\end{equation}
where $Lip^b(\mathbb{R})$ is the set of bounded Lipschitz continuous functions on $\mathbb{R}$ and $Lip(\psi)$ is the Lipschitz constant of $\psi$.

The starting point for the present research is the fact that the space $C([0,\infty), (\mathcal{M},\rho_F))$ is incompatible with the structure of problem \eqref{eq_munonlinold}-\eqref{eq_icnonlinold} in the sense highlighted by the following example. 

\begin{example}[Instability in flat metric]
\label{Ex_stabnonstab}
Take $N=2$ and let $g_1 \equiv 1$ and $c_1 \equiv 0$ in \eqref{eq_munonlinold}-\eqref{eq_icnonlinold}. 
For initial condition $\mu_0 = \delta_{x_1}$ the unique solution of problem \eqref{eq_munonlinold}-\eqref{eq_icnonlinold} in the sense of \cite[Definition 3.3]{OUR} is given by
$$\mu(t) = \delta_{x_1}(dx).$$
Here, $\delta_{x_1}(dx)$ denotes a Dirac mass concentrated in $x_1$.

For a perturbed initial condition $\mu^{\varepsilon}_0 = \delta_{x_1 + \varepsilon}$, on the other hand, we have 
$$\mu^{\varepsilon}(t) = \delta_{x_1 + \varepsilon + t}(dx).$$
Using formula \eqref{Eq_defflat}, we obtain
$\rho_F(\mu(t),\mu^{\epsilon}(t)) = t + \epsilon. $
This means that 
\begin{itemize}
\item $\rho_F(\mu_0,\mu^{\varepsilon}_0) = \varepsilon \to 0$ as $\varepsilon \to 0$,
\item $\rho_F(\mu(t),\mu^{\varepsilon}(t)) = t + \varepsilon \to t$ as $\varepsilon \to 0.$
\end{itemize}
Hence, solutions are neither continuous nor stable with respect to initial data. 
\end{example}
The goal of the present paper is to introduce a new metric, $\rho_{MT}$, which better reflects the structure of system \eqref{eq_munonlinold}-\eqref{eq_icnonlinold} and admits a stability result, which we subsequently prove. 

The paper is organised as follows. In Section \ref{Sec_metrics} we introduce a new metric on Radon measures and discuss its properties. In Section \ref{Sec_Framework}  we present the modified framework of cell differentiation and state the main stability theorem. Section \ref{Sec_Stabilityp0} is devoted to its proof and discussion. Finally, in Appendix we gather additional estimates used in the proofs.

\section{Metrics on the space of measures and measure-transmission metric}
\label{Sec_metrics}

In this section, we study a general class of metrics on Radon measures on $\mathbb{R}$. We discuss and motivate the selection of the one appropriate for system  \eqref{eq_munonlinold}-\eqref{eq_icnonlinold} -- the measure transmission metric $\rho_{MT}$.
\begin{definition}[General class of metrics on $\mathcal{M}(\mathbb{R})$]
Let $\mu_1,\mu_2$ be two finite Radon measures on $\mathbb{R}$. Define
\label{def_rhoGeneral}
\begin{equation}
\label{Eq_def_rhoGeneral}
\rho(\mu_1,\mu_2) :=  \sup_ {\psi \in TFS} \int_{\mathbb{R}}\psi d(\mu_1 - \mu_2),
\end{equation}
where $TFS$ (Test Function Space) is a given subspace of $\mathcal{B}(\mathbb{R})$ (Borel functions on $\mathbb{R}$).
\end{definition}
The most important examples of metrics and their TFSs are summarized in Table \ref{table1}. 
\begin{table}[htbp]
\centering
    \begin{tabular}{ | p{7cm} | p{6.5cm} | p{1.7cm} |}
    \hline
    \textbf{Name of metric}&\textbf{Test Function Space (TFS)} & \textbf{Notation}\\ 
    \hline
Norm (strong) distance& $\{\psi \in \mathcal{B}(\mathbb{R}):  \sup |\psi| \le 1  \}$ & $\|\cdot \|$ 
 \\ \hline

Measure-Transmission metric & Defined below & $\rho_{MT}$
\\  \hline
 
1-Wasserstein distance& $\{\psi \in {\rm Lip}(\mathbb{R}):  {\rm Lip} (\psi) \le 1\}$ & $\rho_{W}$
\\ \hline
Bounded Lipschitz distance or flat metric& $\{\psi \in {\rm Lip}(\mathbb{R}):  {\rm Lip} (\psi) \le 1, \sup|\psi| \le 1  \}$ & $\rho_F$\\
\hline
\end{tabular}
\caption{Metrics on the space of Radon measures and their Test Function Spaces.}
\label{table1}
\end{table}
\begin{proposition}
\label{Prop_ismetric}
Formula \eqref{Eq_def_rhoGeneral} defines a metric provided that TFS satisfies:
\begin{enumerate}[i)]
\item If $\psi \in TFS$ then $-\psi \in TFS$,
\item The set $\{af: f \in TFS,\ 0<a<\infty\}$ contains all smooth compactly supported functions.
\end{enumerate}
\end{proposition}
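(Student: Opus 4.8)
The plan is to verify the four defining properties of a metric: non-negativity, symmetry, the triangle inequality, and the identity of indiscernibles. Three of these are formal consequences of assumption (i) together with the linearity of the integral, while the genuine content lies in the separation property, i.e. the implication $\rho(\mu_1,\mu_2)=0 \Rightarrow \mu_1=\mu_2$, which is where assumption (ii) enters.

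I would first dispose of the easy axioms. Assumption (i) says that $TFS$ is symmetric under $\psi \mapsto -\psi$, so the family of real numbers $\{\int_{\mathbb R}\psi\,d(\mu_1-\mu_2) : \psi \in TFS\}$ is symmetric about $0$; hence it contains a non-negative element and $\rho(\mu_1,\mu_2)\ge 0$, and replacing $\psi$ by $-\psi$ in the supremum interchanges $\mu_1$ and $\mu_2$, giving symmetry $\rho(\mu_1,\mu_2)=\rho(\mu_2,\mu_1)$. For the triangle inequality I would split, for each fixed $\psi \in TFS$,
\[
\int_{\mathbb R}\psi\,d(\mu_1-\mu_3)=\int_{\mathbb R}\psi\,d(\mu_1-\mu_2)+\int_{\mathbb R}\psi\,d(\mu_2-\mu_3)\le \rho(\mu_1,\mu_2)+\rho(\mu_2,\mu_3),
\]
and then take the supremum over $\psi$. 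The implication $\mu_1=\mu_2 \Rightarrow \rho(\mu_1,\mu_2)=0$ is immediate, since then $\mu_1-\mu_2$ is the zero measure.

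The substantive step is the converse separation property. Suppose $\rho(\mu_1,\mu_2)=0$. Using the sign-symmetry of $TFS$ once more, both $\int\psi\,d(\mu_1-\mu_2)\le 0$ and $\int(-\psi)\,d(\mu_1-\mu_2)\le 0$ hold for every $\psi\in TFS$, so in fact $\int_{\mathbb R}\psi\,d(\mu_1-\mu_2)=0$ for all $\psi\in TFS$. Assumption (ii) now lets me upgrade this: given any smooth compactly supported $\phi$, there exist $a>0$ and $f\in TFS$ with $\phi=af$, whence $\int_{\mathbb R}\phi\,d(\mu_1-\mu_2)=a\int_{\mathbb R}f\,d(\mu_1-\mu_2)=0$. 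Thus the finite signed measure $\nu:=\mu_1-\mu_2$ annihilates every smooth compactly supported test function.

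It remains to conclude $\nu=0$, and this is the one point requiring a genuine measure-theoretic argument rather than formal manipulation. I would argue by mollification: for any $f\in C_c(\mathbb R)$ the mollifications $f_\varepsilon=f*\eta_\varepsilon$ are smooth, compactly supported in a fixed neighbourhood of $\mathrm{supp}\,f$, and converge to $f$ uniformly; since $\nu$ is finite, $\int f\,d\nu=\lim_{\varepsilon\to 0}\int f_\varepsilon\,d\nu=0$. Hence $\nu$ annihilates all of $C_c(\mathbb R)$, and by the Riesz representation theorem (uniqueness of the representing measure) this forces $\nu=0$, i.e. $\mu_1=\mu_2$. I expect this final density/Riesz step to be the main obstacle, in that it is the only place where the richness of $TFS$ guaranteed by (ii) is actually used to distinguish distinct measures; the remainder of the argument is purely algebraic.
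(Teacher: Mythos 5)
Your proof is correct, and the easy axioms (symmetry, triangle inequality, $\mu_1=\mu_2\Rightarrow\rho=0$) are handled exactly as in the paper. Where you genuinely diverge is in the separation property. The paper argues the contrapositive: assuming $\mu_1\neq\mu_2$, it applies the Hahn--Jordan decomposition to $\sigma=\mu_1-\mu_2$, picks a bounded piece $P\cap B(0,R)$ of the positive set with $\sigma^+(P\cap B(0,R))>0$, mollifies the indicator $\bold{1}_{P\cap B(0,R)}$ to produce an admissible smooth test function on which the integral is eventually positive, and then invokes assumption ii) to conclude $\rho(\mu_1,\mu_2)>0$. You instead start from $\rho(\mu_1,\mu_2)=0$, deduce via i) and ii) that $\nu=\mu_1-\mu_2$ annihilates all of $C_c^\infty(\mathbb{R})$, upgrade to $C_c(\mathbb{R})$ by uniform approximation (using finiteness of $\nu$), and finish with uniqueness in the Riesz representation theorem. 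Both routes are legitimate; yours outsources the measure-theoretic content to a standard named theorem and in doing so sidesteps a delicate point in the paper's argument, namely that mollifications of an indicator of a Borel set converge pointwise only at Lebesgue density points, so the dominated-convergence passage against $\sigma^{\pm}$ (which may charge the exceptional set) requires a little more care than the paper lets on. The paper's approach buys a self-contained, constructive witness for positivity of the supremum without appealing to Riesz duality; yours buys brevity and robustness. Neither proof addresses finiteness of the supremum (indeed $\rho_W$ can be $+\infty$ for measures of different total mass), but that omission is shared with the paper and is not held against you.
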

\begin{proof}
By assumption i) $$\rho(\mu_1,\mu_2) = \rho(\mu_2,\mu_1).$$ Next, if $\mu_1,\mu_2,\mu_3$ are finite Radon measures then $$\int_{\mathbb{R}} \psi d(\mu_1 - \mu_3) = \int_{\mathbb{R}} \psi d(\mu_1 - \mu_2) + \int_{\mathbb{R}} \psi d(\mu_2 - \mu_3).$$ Taking the supremum over $\psi \in TFS,$ we obtain $$\rho(\mu_1,\mu_3) \le \rho(\mu_1,\mu_2) + \rho(\mu_2,\mu_3).$$ Finally, suppose that $\mu_1 \neq \mu_2$. Then $\sigma = \mu_1 - \mu_2$ is a signed measure. From the Hahn-Jordan decomposition theorem (see e.g. \cite[Theorem 4.1.4 and Corollary 4.1.5]{Cohn}) we obtain positive Radon measures $\sigma^+,\sigma^-$ and disjoint Borel sets $N,P$ such that $\sigma^+(N) = 0$, $\sigma^-(P) = 0$ and $\sigma = \sigma^+ - \sigma^-$. Since $\sigma \neq 0$, $\sigma^+(P)>0$ or $\sigma^-(N)>0$. Without loss of generality, assume that $\sigma^+(P)>0$. Then there exists a ball $B(0,R)$ such that $$\sigma^+(P \cap B(0,R))>0.$$ Take $\psi = \bold{1}_{P \cap B(0,R)}$ and $\psi^{\varepsilon} = \psi * \rho^{\varepsilon}$, where $\rho^{\varepsilon}$ is the standard mollifier. We have
\begin{eqnarray*}
\int_{\mathbb{R}} \psi^{\varepsilon} d(\mu_1 - \mu_2) = \int_{\mathbb{R}}{\psi^{\varepsilon}} d\sigma^+ - \int_{\mathbb{R}}{\psi^{\varepsilon}} d\sigma^-.
\end{eqnarray*}
Using the fact that $\psi^{\varepsilon}$ is bounded by $1$ for every $\varepsilon>0$ and $\psi^{\varepsilon} \to \psi$ pointwise, we pass to the limit in all the terms and obtain
\begin{equation*}
\lim_{\varepsilon \to 0}\int_{\mathbb{R}} \psi^{\varepsilon} d(\mu_1 - \mu_2) = \int_{\mathbb{R}}{\psi} d\sigma^+ - \int_{\mathbb{R}}{\psi} d\sigma^- = \int_{P \cap B(0,R)} d\sigma^+ > 0. 
\end{equation*}
Hence, for $\varepsilon$ small enough we have $\int_{\mathbb{R}} \psi^{\varepsilon} d(\mu_1 - \mu_2)>0$, which means that $\rho(\mu_1,\mu_2)>0$. 
\end{proof}
\begin{corollary}
Norm distance, 1-Wassertein distance and bounded Lipschitz distance are metrics on $\mathcal{M}(\mathbb{R})$. 
\end{corollary}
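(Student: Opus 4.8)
The plan is to invoke Proposition~\ref{Prop_ismetric} once for each of the three distances, so that the whole corollary reduces to checking hypotheses i) and ii) for the three Test Function Spaces listed in Table~\ref{table1}. I will denote these spaces by $TFS_\|$, $TFS_W$ and $TFS_F$ for the norm, $1$-Wasserstein and bounded Lipschitz distances, respectively.

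First I would dispatch condition i), which should be immediate. The defining constraints $\sup|\psi|\le 1$ and ${\rm Lip}(\psi)\le 1$ are each insensitive to the sign of $\psi$, because $\sup|-\psi| = \sup|\psi|$ and ${\rm Lip}(-\psi) = {\rm Lip}(\psi)$; hence $\psi$ lies in a given space if and only if $-\psi$ does.

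The substance, such as it is, lies in condition ii). I would fix an arbitrary smooth compactly supported $\phi$ and first record that it is both bounded, with $M := \sup|\phi| < \infty$, and Lipschitz, with $L := {\rm Lip}(\phi) < \infty$, the latter because a $C^1$ function with compact support has bounded derivative. If $\phi \equiv 0$ I would note that the zero function already lies in each space and $\phi = 1 \cdot 0$. Otherwise $M>0$ and $L>0$ (a constant compactly supported function is necessarily zero), and I would rescale: taking $a = M$ for $TFS_\|$, $a = L$ for $TFS_W$, and $a = \max(M,L)$ for $TFS_F$, the function $f := \phi/a$ meets the relevant bounds, so $f \in TFS$ and $\phi = a f$ with $0<a<\infty$. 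This exhibits every smooth compactly supported function in the set $\{af : f \in TFS,\ 0<a<\infty\}$, giving ii).

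With both hypotheses verified in each case, Proposition~\ref{Prop_ismetric} yields the corollary. I do not anticipate any real obstacle: the only points needing a moment's attention are the degenerate case $\phi \equiv 0$, where the natural scaling constant would vanish, and the routine fact that smooth compactly supported functions are simultaneously bounded and Lipschitz.
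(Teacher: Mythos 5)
Your argument is correct and is exactly what the paper intends: the corollary is stated as an immediate consequence of Proposition~\ref{Prop_ismetric}, and your verification of conditions i) and ii) for the three Test Function Spaces (including the harmless degenerate case $\phi\equiv 0$) fills in precisely the routine checks the paper leaves implicit.
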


The choice of metric, equivalent to the choice of TFS, is dictated by properties of the system that is being modelled. In case of physical or biological models $\rho(\mu_1,\mu_2)$ should reflect the energy necessary to transform system represented by $\mu_1$ into system represented by $\mu_2$. Large value of $\rho(\mu_1,\mu_2)$ means that transformation from $\mu_1$ to $\mu_2$ is energetically expensive. Conversely, small value of $\rho(\mu_1,\mu_2)$ means that configurations $\mu_1$ and $\mu_2$ are energetically close to each other. Let us consider a generic example.
\begin{example}
\label{Ex_class}
Let $\mu_1 = \delta_0$ and $\mu_2 = \delta_{\varepsilon}$, where $0 < \varepsilon \ll 1$. Then $$\int_{\mathbb{R}} \psi d(\mu_1 - \mu_2) = \psi(0) - \psi(\varepsilon).$$ Taking $\psi(x) = \bold{1}_{(-\infty,0]}(x) - \bold{1}_{(0,\infty)}(x)$, where $\bold{1}_A(x)$ equals $1$ if $x\in A$ and $0$ otherwise, we obtain that $\|\mu_1-\mu_2\|=2.$ On the other hand, $$\rho_F(\mu_1,\mu_2) = \rho_W(\mu_1,\mu_2) = \varepsilon,$$ which follows by observing that $Lip(\psi)\le 1$ implies $\psi(0) - \psi(\varepsilon) \le \varepsilon$ and taking test function 
$$\psi(x)= \bold{1}_{(-\infty,0]}(x) + (1-x)\bold{1}_{(0,2)}(x) + (-1)\bold{1}_{[2,\infty)}(x).$$ 
\end{example}
Example \ref{Ex_class} shows that in $\| \cdot \|$ every pair of different states $x$ is distant from one another. Contrarily, in $\rho_{F}$ and $\rho_W$ the distance of states represented by close enough points $x_1$ and $x_2$ is equal to $|x_1 - x_2|$.

\quad\\ \quad \\
\noindent{\bf{Measure-Transmission metric}}\\
The \emph{Measure-Transmission metric} $\rho_{MT}$ on $\mathcal{M}(\mathbb{R})$ is a combination of flat metric and norm distance. It is well adapted to cell differentiation models, which are considered in this paper. 

To motivate its choice, let $x_0 < x_1 < \dots <x_N$ be points in $\mathbb{R}$, which correspond to discrete states of system  \eqref{eq_munonlinold}-\eqref{eq_icnonlinold}. We demand $\rho_{MT}(\delta_{x_i}, \delta_{x_i +\varepsilon})$ to be large for $0 < \varepsilon \ll 1$ and $\rho_{MT}(\delta_{x_i}, \delta_{x_i -\varepsilon})$ to be small for $0 < \varepsilon \ll 1$. This can be obtained by taking a TFS, which is composed of functions which are Lipschitz-continuous on intervals $(x_{i-1},x_{i}]$, see Figure \ref{Fig_TFS}. 

\begin{figure}[h!]
\centering
\includegraphics[width=8cm]{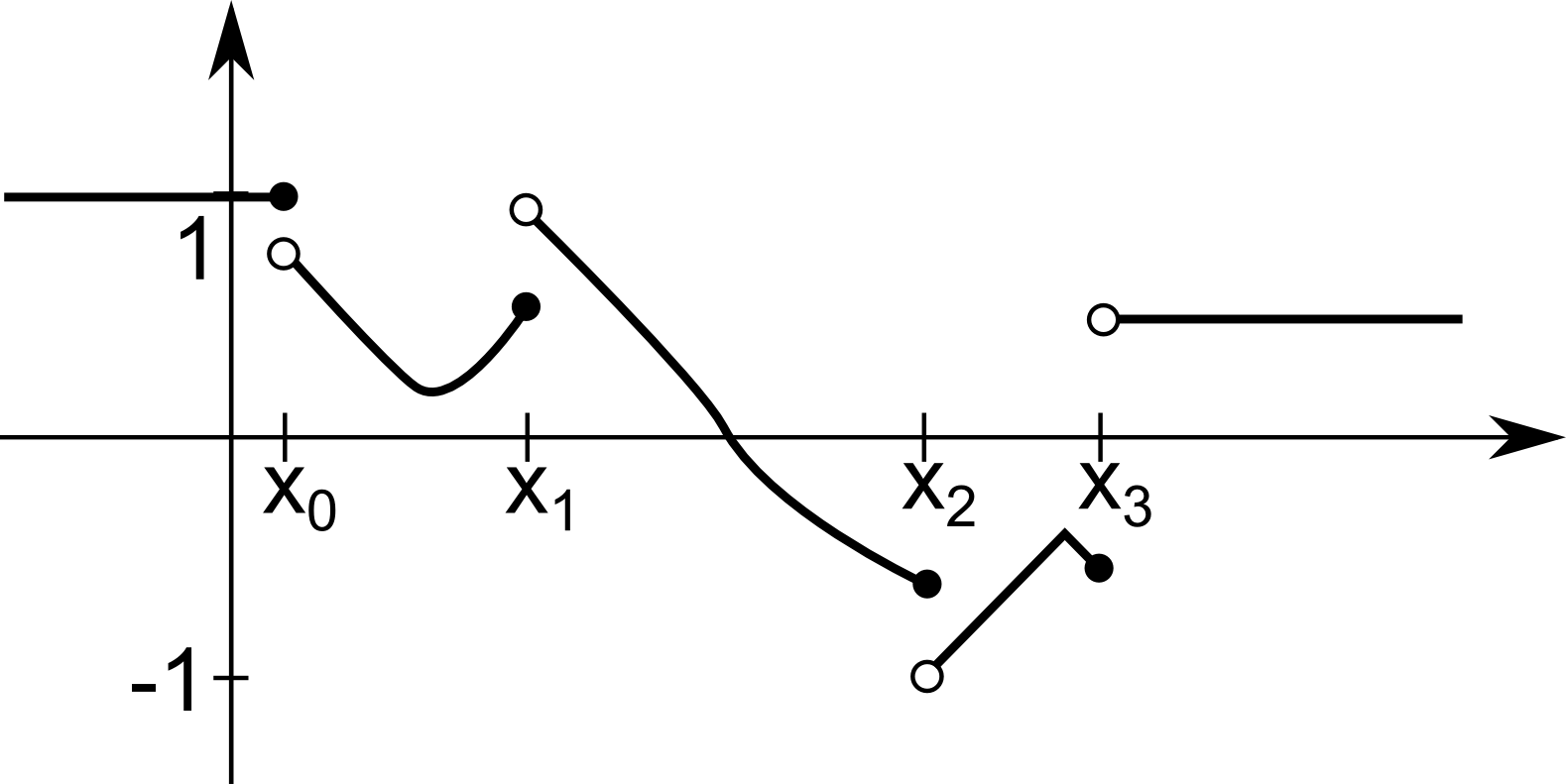}
\caption{Exemplary test function belonging to the space $B_{MT}(\mathbb{R})$ of test functions for the measure-transmission metric. The function is bounded by $1$ and Lipschitz-continuous with constant $1$ on intervals $(x_{i-1},x_{i}]$.}
\label{Fig_TFS}
\end{figure}

The space, the norm in it and the unit ball are defined as follows.

\begin{definition}[Test function space for $\rho_{MT}$]\
Let $x_0 < x_1 < \dots <x_N$ be arbitrary points in $\mathbb{R}$. We define:
\begin{eqnarray*}
W^b_{MT}(\mathbb{R}) &:=& \big\{\psi \in \mathcal{B}(\mathbb{R}):  \sup|\psi| < \infty,  \|\psi\lfloor_{(-\infty,x_0]}\|_{\rm Lip} < \infty, \|\psi\lfloor_{(x_0,x_1]}\|_{\rm Lip}< \infty, \dots ,\\
&& \qquad \qquad\qquad \qquad\qquad \qquad \|\psi\lfloor_{(x_{N-1},x_{N}]}\|_{\rm Lip}< \infty,  \|\psi\lfloor_{(x_{N},+\infty)}\|_{\rm Lip}< \infty\ \big\}.\\
\|\psi\|_{W^b_{MT}} &:=& \max \left( \sup|\psi|,  
\|\psi\lfloor_{(-\infty,x_0]}\|_{\rm Lip}, \|\psi\lfloor_{(x_0,x_1]}\|_{\rm Lip}, \dots ,\|\psi\lfloor_{(x_{N-1},x_{N}]}\|_{\rm Lip},\|\psi\lfloor_{(x_{N},+\infty)}\|_{\rm Lip} \right).\\
B_{MT}(\mathbb{R}) &:=& \{\psi \in W^b_{MT}: \|\psi\|_{W^b_{MT}} \le 1 \}. 
\end{eqnarray*}
\end{definition}
$W^b_{MT}$ equipped with norm $\|\cdot\|_{W^b_{MT}}$ is a Banach space as a direct product of a finite number of Banach spaces of Lipschitz continuous functions on $(x_{i-1},x_i]$ for $i\in \{0,\dots,N+1\}$, where $x_{-1}:=-\infty, x_{N+1}:=\infty$. 

\begin{definition}[Measure-transmission metric]
\label{def_rhoMT}
Let $\mu_1, \mu_2$ be finite Radon measures on $\mathbb{R}$. We define the \emph{measure-transmission metric} by
\begin{equation*}
\rho_{MT}(\mu_1,\mu_2) :=  \sup_{\psi \in B_{MT}(\mathbb{R})} \int_{\mathbb{R}} \psi d(\mu_1 - \mu_2).
\end{equation*}
\end{definition}
\begin{proposition}
$\rho_{MT}$ is a metric.
\end{proposition}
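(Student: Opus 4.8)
The plan is to observe that $\rho_{MT}$ is exactly the metric produced by formula \eqref{Eq_def_rhoGeneral} upon choosing the set $B_{MT}(\mathbb{R})$ as the $TFS$, so that the entire statement reduces to checking the two hypotheses of Proposition \ref{Prop_ismetric}. Once those hypotheses are confirmed, symmetry, the triangle inequality, and separation of points all come for free, and no direct verification is needed.

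First I would verify hypothesis i). Since $\|\cdot\|_{W^b_{MT}}$ is a genuine norm, it is invariant under sign change: one has $\sup|{-\psi}| = \sup|\psi|$, and on each interval $(x_{i-1},x_i]$ the Lipschitz seminorm of $-\psi$ coincides with that of $\psi$ because $|(-\psi)(x) - (-\psi)(y)| = |\psi(x) - \psi(y)|$. Hence $\|-\psi\|_{W^b_{MT}} = \|\psi\|_{W^b_{MT}}$, so $\psi \in B_{MT}(\mathbb{R})$ forces $-\psi \in B_{MT}(\mathbb{R})$.

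Next I would verify hypothesis ii), namely that the cone $\{a\psi : \psi \in B_{MT}(\mathbb{R}),\ 0 < a < \infty\}$ contains every $f \in C_c^\infty(\mathbb{R})$. Such an $f$ is globally bounded and globally Lipschitz, so in particular its restriction to each of the intervals $(-\infty,x_0], (x_0,x_1], \dots, (x_N,+\infty)$ is Lipschitz with constant no larger than its global Lipschitz constant; therefore $\|f\|_{W^b_{MT}} < \infty$. If $f \equiv 0$ it already lies in $B_{MT}(\mathbb{R})$; otherwise I set $a := \|f\|_{W^b_{MT}} > 0$ and $\psi := f/a$, so that $\|\psi\|_{W^b_{MT}} = 1$, giving $\psi \in B_{MT}(\mathbb{R})$ and $f = a\psi$. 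With both hypotheses in hand, Proposition \ref{Prop_ismetric} applies and delivers the claim.

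I do not anticipate a genuine obstacle: the argument is a routine specialization of the general proposition. The only point meriting care is hypothesis ii), where one must notice that the piecewise Lipschitz control on the $N+2$ intervals is automatic for globally Lipschitz functions. This is precisely why the enlarged test-function class $B_{MT}(\mathbb{R})$---whose members need only be Lipschitz on each piece rather than globally---still contains a positive multiple of every smooth compactly supported function, which is exactly the ingredient that the separation-of-points step in the proof of Proposition \ref{Prop_ismetric} requires (a suitably scaled mollified indicator must be an admissible test function).
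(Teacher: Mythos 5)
Your proposal is correct and takes the same route as the paper, whose entire proof is the single line ``Follows by Proposition \ref{Prop_ismetric}''; you simply spell out the routine verification of hypotheses i) and ii) (sign-invariance of the norm, and the fact that a smooth compactly supported function is globally Lipschitz and hence has finite $\|\cdot\|_{W^b_{MT}}$-norm, so a positive rescaling lands in $B_{MT}(\mathbb{R})$). Nothing further is needed.
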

\begin{proof}
Follows by Proposition \ref{Prop_ismetric}. 
\end{proof}
\begin{example}
$\rho_{MT}(\delta_{x_1},\delta_{x_1 +\varepsilon}) = 2$, whereas $\rho_{MT}(\delta_{x_1},\delta_{x_1-\varepsilon})=\varepsilon$.
\end{example}
\begin{proof}
In case of $\rho_{MT}(\delta_{x_1},\delta_{x_1+\varepsilon})$ the supremum from Definition \ref{def_rhoMT} is realized by $$\psi = 
\bold{1}_{(-\infty,x_1]}(x) - \bold{1}_{(x_1,\infty)}(x).$$ In case of $\rho_{MT}(\delta_{x_1},\delta_{x_1-\varepsilon})$ the supremum is realized by $$\psi = (-1)\bold{1}_{(-\infty,x_1-2]}(x) + (x-x_1+1)\bold{1}_{(x_1-2,x_1)} + \bold{1}_{[x_1,\infty)}.$$
Note that we cannot use function $\psi = 
\bold{1}_{(-\infty,x_1)} - \bold{1}_{[x_1,\infty)}$, since it is not left-continuous in $x_1$. 
\end{proof}
In Table \ref{Tab_VarMetrics} we summarize the behaviour of metrics considered in this section in the vicinity of points $x_i$.
\begin{table}[h!]
\centering
\begin{tabular}{|c|c|c|c|c|}
\hline
{\rm \bf Metric} & $\| \cdot \|$  & $\rho_{MT}$ & $\rho_W$ & $\rho_{F}$\\
\hline
{\rm \bf Distance of $\delta_{x_1}$ and $\delta_{x_1+\varepsilon}$} & $2$ & $2$ &$\varepsilon$ & $\varepsilon$\\
\hline 
{\rm \bf Distance of $\delta_{x_1}$ and $\delta_{x_1-\varepsilon}$} & $2$ & $\varepsilon$ & $\varepsilon$ & $\varepsilon$\\
\hline 
\end{tabular}
\caption{Perturbations of $\delta_{x_1}$ calculated in various metrics.}
\label{Tab_VarMetrics}
\end{table}

The measure-transmission metric can be thought of as halfway between $\| \cdot \|$ and $\rho_F$. Namely, it has properties of the flat metric to the left of $x_i$ and of the norm distance to the right of $x_i$, which corresponds to an energy barrier at discrete states $x_i$.

\section{Modified framework of cell differentiation}
\label{Sec_Framework}
The framework for modelling cell differentiation processes, introduced in \cite{OUR} and briefly presented in Section \ref{Sec_intro}, is given by the following equations:
\begin{eqnarray}
\partial_t \mu(t) + \partial_x (g_1(v(t))\bold{1}_{x \neq x_i}(x) \mu(t)) &=& p(v(t),x) \mu(t),\label{eq_munonlin}\\
g_1(v(t))  \frac {D\mu(t)}{D\mathcal{L}^1} (x_i^+) &=& c_i(v(t)) \int_{\{x_i\}} d\mu(t), \label{eq_bcnonlin} \quad\quad i = 0,\dots,N\\
\mu(0) &=& \mu_0,\label{eq_icnonlin}
\end{eqnarray}
where $t\in \mathbb R^+$ and  $x\in \mathbb R$.   $x_0 < x_1 < \dots < x_N$ is a finite collection of points in $\mathbb{R}$,
$\bold{1}_{x \neq x_i}$ is equal $1$ if $x \in (x_0,x_1) \cup (x_1,x_2) \cup \dots \cup (x_{N-1},x_N)$ and $0$ otherwise. $\frac {D\mu}{D\mathcal{L}^1}$ denotes the density of measure $\mu$ with respect to the one-dimensional Lebesgue measure and $v(t) := \int_{\{x_N\}} d\mu(t)$ denotes the mass of point $x_N$. The initial datum $\mu_0$ is a Radon measure supported on the interval $[x_0,x_N]$.
The assumptions on coefficients are following.
\begin{assumptions}[see \cite{OUR}, Assumptions 3.2] \label{Assumptions}
\begin{itemize}
\item[(i)]$g_1(v) \in Lip^b(\mathbb R)$, and $g_1>0$,
\item[(ii)]$p = p(v(t),x)=p_1(v(t))p_2(x)$,
\item[(iii)]$p_1(v) \in Lip^b(\mathbb R)$,
\item[(iv)]$p_2(x) \in \mathcal{B}^b(\mathbb{R})$, $p_2(x)=0$ for $x \in \mathbb{R}\backslash [x_0,x_N]$ and $p_2$ restricted to $(x_{i-1},x_i)$ is Lipschitz continuous for every $i \in \{1,\dots,N\},$ 
\item[(v)]$c_i=c_i(v) \in Lip^b(\mathbb R), \quad i=0,1, \ldots ,N$,
\item[(vi)]$c_i\ge0, \quad i=0,1,\ldots,N,$
\item[(vii)]$c_N=0$.
\end{itemize}
\end{assumptions}
Above, $\mathcal{B}^b(\mathbb{R})$ stands for the space of bounded Borel functions on $\mathbb{R}$ and $Lip^b(\mathbb{R})$ for the space of bounded Lipschitz functions on $\mathbb{R}$.
The solutions are defined as follows. 

\begin{definition}[$\rho$-measure-transmission solution, see Definition 3.3 from \cite{OUR}] \label{def_distrsol}
Let $\mu_0$ be a Radon measure supported on $[x_0,x_N]$. A measure-valued function $\mu \in C([0,\infty),({\cal M},\rho))$ with 
$\int_{\{x_N\}}d\mu(t) \in BV_{loc}([0,\infty))$ is  called a \emph {$\rho$-measure-transmission solution} of problem \eqref{eq_munonlin}--\eqref{eq_icnonlin}, if 
\begin{enumerate}[i)]
\item  for every $\phi \in C_c^{\infty}([0,\infty)\times \mathbb{R})$
\begin{eqnarray}\nonumber
- \int_{ \mathbb{R}^+} \int_{ \mathbb{R}} \partial_t \phi(t,x) d\mu(t)(x)dt -  \int_{ \mathbb{R}^+} \int_{ \mathbb{R}} g_1(v(t))\bold{1}_{x\neq x_i}(x)\partial_x \phi(t,x)  d\mu(t)(x)dt \\ \label{WeakQuestion}= \int_{ \mathbb{R}^+} \int_{ \mathbb{R}}  p_1(v(t))p_2(x)\phi(t,x)d\mu(t)(x)dt + \int_{ \mathbb{R}} \phi(0,x)d\mu_0(x),
\end{eqnarray}
\item for every $t^* > 0$ there exists $\varepsilon(t^*)$ such that for every $t>t^*$ measure $\mu(t)$ is absolutely continuous with respect to the Lebesgue measure $\mathcal{L}^1$ for $x \in (x_i, x_i+\varepsilon)$ and for $\mathcal{L}^1$ a.e. $t \in (0,\infty)$ 
\begin{equation*}
\lim_{x \to x_i^+} g_1(v(t)) \frac {D\mu(t)}{D\mathcal{L}^1} (x) = c_i(v(t))\int_{x_i} d\mu(t),
\end{equation*}
\item for every $i = 0,1, \dots, N$ we have $\int_{\{x_i\}} d\mu(t) \to \int_{\{x_i\}} d\mu(0)$ as $t \to 0$.
\end{enumerate}
\begin{eqnarray*}
\end{eqnarray*}
Above, $BV_{loc}([0,\infty))$ is the space of right-continuous functions, which are of bounded variation on every finite subinterval of $[0,\infty)$ (we refer e.g. to \cite{Lojasiewicz} for definition and properties of $BV$ functions).
\end{definition}
The following theorem summarizes the analytical content of \cite{OUR}.
\begin{theorem}[Existence and uniqueness of $\rho_F$-measure-transmission solutions]
\label{ThmNonlinF}
For every Radon measure $\mu_0\in \mathcal{M}(\mathbb{R})$ such that $supp(\mu_0) \subset [x_0,x_N]$, there exists a unique measure-transmission solution of problem  \eqref{eq_munonlin}--\eqref{eq_icnonlin} in the sense of Definition \ref{def_distrsol} with $\rho=\rho_F$.
\end{theorem}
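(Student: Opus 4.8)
The plan is to reduce the nonlinear problem \eqref{eq_munonlin}--\eqref{eq_icnonlin} to a family of linear problems parametrised by the scalar function $v(t)=\int_{\{x_N\}}d\mu(t)$, to solve each linear problem explicitly by the method of characteristics, and to close the argument with a fixed-point scheme in $v$. Concretely, I would first fix an arbitrary nonnegative bounded $v \in C([0,T])$; then $g_1(v(\cdot))$, $p_1(v(\cdot))$ and the $c_i(v(\cdot))$ become known time-dependent Lipschitz coefficients, and \eqref{eq_munonlin}--\eqref{eq_icnonlin} turns into a \emph{linear} transport equation with growth term $p$ and transmission conditions at the points $x_i$.

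For this linear problem I would construct the solution by flowing mass to the right along the characteristics $\dot X=g_1(v(t))$, which—since $g_1>0$—always move in the direction of increasing maturity, and by accounting for the reaction term via Duhamel's formula, i.e. multiplying transported mass by the exponential factor $\exp\!\big(\int p_1(v)\,p_2\,dt\big)$ along each characteristic. The indicator $\mathbf 1_{x\neq x_i}$ switches off transport exactly at the discrete points, so mass arriving at $x_i^-$ is trapped and forms an atom; I would track each atomic mass $m_i(t):=\int_{\{x_i\}}d\mu(t)$ by a linear ODE with gain from the incoming continuous flux at $x_i$, loss from the transmission outflux $c_i(v(t))\,m_i(t)$ dictated by \eqref{eq_bcnonlin}, and local reaction rate $p_1(v(t))p_2(x_i)$. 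The transmission condition simultaneously prescribes the boundary density $\frac{D\mu}{D\mathcal L^1}(x_i^+)$, which seeds the absolutely continuous part immediately to the right of $x_i$. Assembling the transported continuous part, the atoms, and these emitted densities yields an explicit candidate $\mu^v(t)$, which one verifies satisfies the weak formulation i), the trace condition ii) and the initial condition iii) of Definition \ref{def_distrsol}, and lies in $C([0,T],(\mathcal M,\rho_F))$.

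Next I would establish the quantitative estimates that drive the fixed point. Using the boundedness and Lipschitz continuity of all coefficients (Assumptions \ref{Assumptions}), a Gronwall argument shows that the total mass grows at most exponentially, ruling out finite-time blow-up, and that the linear solution map is Lipschitz in the data, namely $\rho_F(\mu^{v_1}(t),\mu^{v_2}(t)) \le C(T)\,\big(\rho_F(\mu_0^1,\mu_0^2)+\|v_1-v_2\|_{C([0,t])}\big)$. I would then set $\Phi(v)(t):=\int_{\{x_N\}}d\mu^v(t)$ and deduce from this estimate that $\Phi$ is a contraction on $C([0,T])$ for $T$ small enough; its unique fixed point is the sought $v$, and $\mu^v$ the unique nonlinear solution on $[0,T]$. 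Since the a priori bounds are uniform on bounded time intervals, the local solution extends to a global one by iteration, and uniqueness propagates from the uniqueness of the fixed point together with a Gronwall estimate on $\rho_F(\mu^1(t),\mu^2(t))$.

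The main obstacle, and the part requiring the most care, is the rigorous treatment of the transmission conditions in the measure-valued setting: one must show that for $t>t^*$ the solution genuinely possesses an absolutely continuous part on a right-neighbourhood $(x_i,x_i+\varepsilon)$ with a well-defined one-sided trace $\frac{D\mu}{D\mathcal L^1}(x_i^+)$, and that the coupled system of atomic ODEs and emitted continuous densities is self-consistent. A secondary subtlety is that the nonlinearity acts through the \emph{atomic} mass $v(t)$ at $x_N$, which is only $BV_{loc}$ rather than continuous a priori; verifying $v\in BV_{loc}([0,\infty))$ and that the fixed-point map is well posed on the correct function space must be carried out alongside the construction. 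The remaining steps—the Duhamel bookkeeping, the Gronwall inequalities, and the contraction estimate—are routine once this structure is in place.
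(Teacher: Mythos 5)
The first thing to say is that the paper does not actually prove Theorem \ref{ThmNonlinF}: it is imported wholesale from \cite{OUR}, and the only structure of that proof visible here is that \cite{OUR} builds solutions from \emph{explicit} characteristic formulas (their (17)--(21)) and recovers general $g_1$ by the change of time variable $\tilde t=\int_0^t g_1(v(s))\,ds$ of their Definition 6.1, rather than by a contraction in $v$. So your programme --- freeze $v$, solve the resulting linear transmission problem along characteristics with Duhamel factors and atomic ODEs at the $x_i$, then close with a fixed point --- is a genuinely different route, and most of its ingredients (the explicit linear solution, the exponential mass bound, the global extension) are sound and essentially reappear in the superposition formula \eqref{eq_defeta}--\eqref{eq_defeta2}.

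The genuine gap is the function space for the fixed point. You propose to run the contraction for $\Phi(v)(t)=\int_{\{x_N\}}d\mu^v(t)$ on $C([0,T])$ with the sup norm, but $v$ is \emph{not} continuous: since $c_N=0$, the point $x_N$ only absorbs mass, and any atom of $\mu_0$ located at some $x_b\in(x_{N-1},x_N)$ reaches $x_N$ at a definite time and makes $v$ jump --- this is exactly why Definition \ref{def_distrsol} only demands $v\in BV_{loc}$. Worse, the sup norm is the wrong gauge for the iteration: for $\mu_0=\delta_{x_b}$ the iterates satisfy $\Phi(v^{(k)})=\mathbf{1}_{[t_k,\infty)}$ where $t_k$ is the arrival time of the atom under speed $g_1(v^{(k)})$, so $\|\Phi(v^{(k)})-\Phi(v^{(k-1)})\|_{\infty}=1$ no matter how close the arrival times $t_k$ and $t_{k-1}$ are; $\Phi$ is not even continuous, let alone a contraction, in that norm. (Relatedly, $\mu\mapsto\int_{\{x_N\}}d\mu$ is not $\rho_F$-continuous, so your estimate $\rho_F(\mu^{v_1}(t),\mu^{v_2}(t))\le C\bigl(\rho_F(\mu_0^1,\mu_0^2)+\|v_1-v_2\|_{C([0,t])}\bigr)$ cannot be fed back into a bound on $\Phi(v_1)-\Phi(v_2)$.) The scheme can be salvaged, but only by measuring $v$ in an $L^1$-in-time norm --- note that the paper's own nonlinear estimate in Section \ref{Sec_Nonlinear_Estimate_p0} controls precisely $\int_0^T|v_1-v_2|\,dt$ and never $\sup_t|v_1-v_2|$ --- or by eliminating the nonlinearity altogether through the time rescaling used in \cite{OUR}. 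You flag the $BV_{loc}$ issue as a secondary subtlety to be checked, but it is in fact the point at which the argument as written breaks.
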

As observed in Example \ref{Ex_stabnonstab}, in case $\rho=\rho_F$ the solutions lack continuity with respect to perturbation of the initial condition. The choice of metric $\rho=\rho_{MT}$ fixes this defect. The well-posedness results in the new setting are contained in Theorem \ref{ThmNonlin} (existence and uniqueness) and Theorem \ref{StabilityThm} (stability).  
\begin{theorem}[Existence and uniqueness of $\rho_{MT}$-measure-transmission solutions]
\label{ThmNonlin}
For every Radon measure $\mu_0\in \mathcal{M}(\mathbb{R})$ such that $supp(\mu_0) \subset [x_0,x_N]$, there exists a unique measure-transmission solution of problem  \eqref{eq_munonlin}--\eqref{eq_icnonlin} in the sense of Definition \ref{def_distrsol} with $\rho=\rho_{MT}$.
\end{theorem}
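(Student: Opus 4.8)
## Proof Strategy for Theorem \ref{ThmNonlin}

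The plan is to leverage Theorem \ref{ThmNonlinF}, which already gives existence and uniqueness of a measure-transmission solution $\mu$ in the $\rho_F$ setting. The key observation is that Definition \ref{def_distrsol} consists of three conditions, and only the membership requirement $\mu \in C([0,\infty),(\mathcal{M},\rho))$ depends on the choice of metric $\rho$; conditions i), ii), iii) are stated pointwise in $t$ (or integrally in $t$) and make no reference to $\rho$ at all. Therefore, to transfer the result from $\rho_F$ to $\rho_{MT}$, I would show that the solution $\mu$ produced by Theorem \ref{ThmNonlinF} actually lies in $C([0,\infty),(\mathcal{M},\rho_{MT}))$, and conversely that any $\rho_{MT}$-solution is automatically a $\rho_F$-solution so that uniqueness is inherited.

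The uniqueness half is the easy direction: since $B_{MT}(\mathbb{R})$ contains the test function space of the flat metric (every function that is bounded by $1$ and globally $1$-Lipschitz is in particular bounded by $1$ and piecewise $1$-Lipschitz on the intervals $(x_{i-1},x_i]$), we have $\rho_F(\mu_1,\mu_2) \le \rho_{MT}(\mu_1,\mu_2)$ for all finite Radon measures. Consequently $\rho_{MT}$-continuity implies $\rho_F$-continuity, so every $\rho_{MT}$-measure-transmission solution is a fortiori a $\rho_F$-measure-transmission solution; the uniqueness statement of Theorem \ref{ThmNonlinF} then yields uniqueness in the $\rho_{MT}$ setting for free.

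For existence, I would take the unique $\rho_F$-solution $\mu$ from Theorem \ref{ThmNonlinF} and verify $\rho_{MT}$-continuity in time, i.e. that $\rho_{MT}(\mu(t),\mu(s)) \to 0$ as $s \to t$. The structure of the solution should be exploited here: by condition ii) the solution is absolutely continuous with respect to $\mathcal{L}^1$ just to the right of each discrete state $x_i$, so the troublesome asymmetry that $\rho_{MT}$ penalizes --- mass appearing immediately to the right of a point, as in Example \ref{Ex_stabnonstab} --- does not arise along the dynamics, because the transport speed $g_1 > 0$ pushes mass rightward through the continuous intervals where it is already represented by an absolutely continuous density rather than an atom. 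Concretely, I would decompose $\mu(t)$ into its atomic part concentrated at the points $x_0,\dots,x_N$ and its diffuse part supported on the continuous intervals, and estimate the $\rho_{MT}$-distance of the two parts separately: the atomic masses vary continuously in $t$ by condition iii) together with the $BV_{loc}$ regularity of $v$, while the diffuse part, being transported by a positive bounded Lipschitz velocity and modified by the bounded growth term $p$, varies continuously even in the stronger norm-type sense required to the right of each $x_i$.

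The main obstacle is precisely this last estimate near the discrete points. Because $\rho_{MT}$ behaves like the total-variation (norm) distance to the right of each $x_i$, one cannot merely transport mass a small distance and pay a small $\rho_{MT}$-price, as Example \ref{Ex_stabnonstab} makes clear for a moving atom. The argument must show that no atom is ever created just to the right of an $x_i$ during the evolution: any mass crossing into a continuous interval does so as an absolutely continuous density satisfying the transmission boundary condition ii), and an absolutely continuous measure transported by a Lipschitz flow remains absolutely continuous with densities depending continuously on $t$ in the relevant piecewise-Lipschitz dual norm. I would make this rigorous by an explicit representation of the diffuse part along characteristics and a direct estimate of $\int \psi\, d(\mu(t) - \mu(s))$ uniformly over $\psi \in B_{MT}(\mathbb{R})$, bounding the contribution of each piece by the modulus of continuity of the atomic masses and of the transported density. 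The estimates needed for this step are the ones the author defers to the Appendix.
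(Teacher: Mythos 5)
Your proposal follows essentially the same route as the paper: uniqueness from the inclusion $C([0,\infty),(\mathcal{M},\rho_{MT}))\subset C([0,\infty),(\mathcal{M},\rho_{F}))$ (a consequence of $\rho_F\le\rho_{MT}$), and existence by checking that the explicit solution of Theorem \ref{ThmNonlinF}, given by the characteristic formulas (17)--(21) of \cite{OUR}, is continuous (in fact locally Lipschitz) in time also with respect to $\rho_{MT}$ --- the paper simply observes that the proof of Lemma 4.9 in \cite{OUR} carries over unchanged. The one inaccuracy in your sketch is the claim that the atomic masses $\int_{\{x_i\}}d\mu(t)$ vary continuously in $t$ (they jump upward whenever an atom reaches $x_i$ from the left, and neither condition iii) nor the $BV_{loc}$ regularity of $v$ rules this out), so your atomic/diffuse decomposition cannot be estimated term by term; this does not derail the strategy, because at such times $\rho_{MT}$-continuity comes from the flat-metric-like behaviour of $\rho_{MT}$ to the \emph{left} of $x_i$, which the direct characteristics estimate you propose would capture in any case.
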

\begin{proof}
Observe that $C([0,\infty), (\mathcal{M},\rho_{MT})) \subset C([0,\infty), (\mathcal{M},\rho_{F})).$ Thus, uniqueness follows from Theorem \ref{ThmNonlinF}. Existence of solutions is a consequence of observation that the proof of Lemma 4.9 from \cite{OUR} carries over with no change to the case of $\rho_{MT}$. Thus, solutions defined explicitly by formulas (17)-(21) in \cite{OUR} belong not only to ${\rm Lip}_{loc}([0,T], (\mathcal{M},\rho_{F}))$ but also to ${\rm Lip}_{loc}([0,T], (\mathcal{M},\rho_{MT}))$ and hence to $C([0,\infty), (\mathcal{M},\rho_{F}))$. Change of the time variable in \cite[Definition 6.1]{OUR} preserves this regularity. Thus, solutions constructed in \cite{OUR} belong in fact to $C([0,\infty), (\mathcal{M},\rho_{MT}))$, which concludes the proof.
\end{proof}

\begin{remark}
\rm
\begin{enumerate}[i)]
\item It is possible to adopt a more general approach to existence and uniqueness of solutions based on the superposition solution technique, see \cite{MyPHD}. 
\item The assumptions of Definition \ref{def_distrsol} can be relaxed. This leads to additional technical difficulties and is fully treated in \cite{MyPHD}, see also Remark \ref{Rem16}.
\end{enumerate}
\end{remark}

\noindent Now, we formulate our main result. Let
\begin{itemize}
\item $\sup(c) := \max_{i \in \{0, \dots N\}} \sup_{v \in \mathbb{R}} |c_i(v)|$,
\item $\sup(g_1):= \sup_{v \in \mathbb{R}} g_1(v)$,
\item $\min(g_1):= \min_{k =1,2} \inf_{t \in [0,\infty)}g_1(v_k(t)),$ where $v_k(t) = \int_{\{x_N\}} d\mu_k(t)$,
\item ${\rm Lip}(g_1)$ be the Lipschitz constant of $g_1$,
\item ${\rm Lip}(c):= \max_{i \in 0,\dots,N} {\rm Lip}(c_i)$, where ${\rm Lip}(c_i)$ are the Lipschitz constants of functions $c_i$,
\item $TV(\mu):=\int_{\mathbb{R}} d\mu$ be the total variation of $\mu$.
\end{itemize}
Then the following stability theorem holds.
\begin{theorem}[Stability of $\rho_{MT}$-measure-transmission solutions in case $p=0$]
\label{StabilityThm}
Let $\mu_1(t)$ and $\mu_2(t)$ be two $\rho_{MT}$-measure-transmission solutions of system \eqref{eq_munonlin}-\eqref{eq_icnonlin} with $p\equiv 0$, corresponding to initial conditions $\mu_1(0)$ and $\mu_2(0)$, respectively. There exist constants $\alpha,\beta$, dependent only on $\sup(c)$, $\sup(g_1)$, $\min(g_1)$,  ${\rm Lip}(g_1)$, ${\rm Lip}(c)$, $TV(\mu_1(0))$, $TV(\mu_2(0))$  such that
\begin{equation}
\label{Eq_stabestimate}
\rho_{MT}(\mu_1(t),\mu_2(t)) \le e^{\alpha \left\lceil \frac t \beta \right\rceil} \rho_{MT} (\mu_1(0),\mu_2(0)),
\end{equation}
where $ \left\lceil \frac t \beta \right\rceil$ is the smallest integer greater or equal $\frac {t}{\beta}$.\footnote{In particular, $\lim_{t \rightarrow 0^+} e^{\alpha \left\lceil \frac t \beta \right\rceil} = e^{\alpha}$.}
\end{theorem}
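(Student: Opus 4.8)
The plan is to prove \eqref{Eq_stabestimate} by duality: represent $\rho_{MT}$ through its test functions, transport a test function backward along the characteristics of one solution, and iterate the resulting one-step estimate over short time intervals. The first ingredient I would record is the action of the flow on a fixed test function. Writing a solution $\mu$ with atoms $m_i(t)=\mu(t)(\{x_i\})$ and absolutely continuous part $u(t,\cdot)$ on the open intervals, an integration by parts that uses the transmission condition \eqref{eq_bcnonlin} to eliminate $u(t,x_i^+)$, together with the left-continuity forced by $B_{MT}(\mathbb{R})$, yields for $\psi\in B_{MT}(\mathbb{R})$ and $p\equiv 0$
\begin{equation*}
\frac{d}{dt}\int_{\mathbb{R}}\psi\,d\mu(t) = g_1(v(t))\sum_{i=0}^{N-1}\int_{x_i}^{x_{i+1}}\psi'(x)\,u(t,x)\,dx + \sum_{i=0}^{N-1} c_i(v(t))\,m_i(t)\,\big(\psi(x_i^+)-\psi(x_i)\big).
\end{equation*}
The second sum isolates exactly the transmission across the discrete states, and it is precisely the jump $\psi(x_i^+)-\psi(x_i)$, allowed in $B_{MT}$ but penalised to the right, that encodes the energy barrier. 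Reading this identity as the pairing of $\mu$ against the generator, I would fix a terminal time, take $\psi\in B_{MT}(\mathbb{R})$ (near-)realising $\rho_{MT}(\mu_1(t),\mu_2(t))$, and define the backward-adjoint function $\phi^{(1)}$ of the first solution: it solves $\partial_t\phi+g_1(v_1)\partial_x\phi=0$ on each open interval, is left-continuous at every $x_i$, and its atom values satisfy the linear ODE $\tfrac{d}{dt}\phi(t,x_i)=c_i(v_1(t))\big(\phi(t,x_i)-\phi(t,x_i^+)\big)$. By construction $\int\phi^{(1)}(s)\,d\mu_1(s)$ is constant in $s$.

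The core of the argument is then the splitting
\begin{equation*}
\int_{\mathbb{R}}\psi\,d(\mu_1(t)-\mu_2(t)) = \int_{\mathbb{R}}\phi^{(1)}(0)\,d(\mu_1(0)-\mu_2(0)) - \int_0^t \frac{d}{ds}\Big(\int_{\mathbb{R}}\phi^{(1)}(s)\,d\mu_2(s)\Big)\,ds,
\end{equation*}
where the last integrand is nonzero only because $\phi^{(1)}$ is adjoint to the velocity $g_1(v_1)$ rather than $g_1(v_2)$. The first term I would bound by $\|\phi^{(1)}(0)\|_{W^b_{MT}}\,\rho_{MT}(\mu_1(0),\mu_2(0))$, which requires showing that the backward flow keeps $\phi^{(1)}(0)$ in a controlled multiple of $B_{MT}$: on each interval the flow is a rigid translation by $\int g_1(v_1)$, hence preserves the supremum bound and every interval Lipschitz constant, while the atom ODE keeps $|\phi(t,x_i)|\le 1$ and couples each atom only to the neighbouring interval. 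Applying the displayed identity to the mismatch, the second (nonlinear) term I would estimate by a constant times $\int_0^t\big({\rm Lip}(g_1)+{\rm Lip}(c)\big)\,|v_1(s)-v_2(s)|\,\big(TV(\mu_2(0))+1\big)\,ds$, using $\|\partial_x\phi^{(1)}\|_\infty\le 1$ and $|\psi(x_i^+)-\psi(x_i)|\le 2$.

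The hard part is the nonlinear term, and specifically controlling $|v_1(s)-v_2(s)|=|\mu_1(s)(\{x_N\})-\mu_2(s)(\{x_N\})|$ by $\rho_{MT}$. This is genuinely delicate because $\rho_{MT}$ does \emph{not} dominate atomic mass approached from the left: for instance $\rho_{MT}(\delta_{x_N},\delta_{x_N-\varepsilon})=\varepsilon$ while the corresponding $v$-values differ by the whole mass. The discrepancy is, however, transient, since mass sitting just left of a discrete state is transported into it within time $O(\varepsilon/\min(g_1))$. I would therefore avoid a pointwise bound and instead control the \emph{time integral} $\int_0^\beta|v_1-v_2|\,ds$ by $\sup_{s\le\beta}\rho_{MT}(\mu_1(s),\mu_2(s))$ up to the factor $1/\min(g_1)$, exploiting that characteristics move right at speed at least $\min(g_1)$ and that mass released at an interior $x_i$ remains in $(x_i,x_{i+1})$ as long as $\sup(g_1)\,\beta$ is smaller than the smallest gap $x_{i+1}-x_i$. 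Choosing $\beta$ by this no-crossing condition makes both the $W^b_{MT}$-bound on $\phi^{(1)}(0)$ and the time-integral estimate hold with constants depending only on the quantities listed before the theorem, and produces a one-step inequality $\rho_{MT}(\mu_1(\beta),\mu_2(\beta))\le e^{\alpha}\rho_{MT}(\mu_1(0),\mu_2(0))$. Iterating over the $\lceil t/\beta\rceil$ successive intervals of length $\beta$, restarting the adjoint construction on each and absorbing the transmission and velocity-mismatch amplification into $\alpha$, yields \eqref{Eq_stabestimate}; the ceiling, and the limit in the footnote, reflect exactly that at least one full step factor $e^{\alpha}$ must be paid for any $t>0$.
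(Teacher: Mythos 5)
Your overall architecture (pull the test function back along the characteristics of one solution, pay for the velocity mismatch through $\int_0^t|v_1-v_2|\,ds$, close the loop with a bound on that time integral, iterate over short intervals) is the same as the paper's; your adjoint identity and the $W^b_{MT}$-control of $\phi^{(1)}(0)$ correspond to the paper's linear estimate and the pulled-back function $\psi^0$ there. However, there are two genuine gaps in how you close the loop.

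First, the nonlinear estimate as you state it does not close the argument. You propose $\int_0^\beta|v_1-v_2|\,ds\le \frac{C}{\min(g_1)}\sup_{s\le\beta}\rho_{MT}(\mu_1(s),\mu_2(s))$ and then substitute it into the one-step identity. That yields
\begin{equation*}
\sup_{s\le\beta}\rho_{MT}(\mu_1(s),\mu_2(s))\le C_0\,\rho_{MT}(\mu_1(0),\mu_2(0))+C_1\,\sup_{s\le\beta}\rho_{MT}(\mu_1(s),\mu_2(s)),
\end{equation*}
with $C_1\sim\bigl({\rm Lip}(g_1)+{\rm Lip}(c)\bigr)\bigl(TV(\mu_2(0))+1\bigr)/\min(g_1)$. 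This is self-referential and can be absorbed only if $C_1<1$, which is false in general; and $C_1$ does not shrink as $\beta\to0$, since the factor $1/\min(g_1)$ survives. The paper instead bounds $\int_0^T|v_1-v_2|\,ds$ directly by $\rho_{MT}(\mu_1(0),\mu_2(0))$, see \eqref{Nonlinear_Estimate}, via a Fubini-type construction of an explicit $W^b_{MT}$ test function built from the sets on which $\int_{[x_N-\min(G_j),x_N]}d(\mu_1(0)-\mu_2(0))$ has a given sign. Moreover, that proof produces an unavoidable self-referential term $I_2$ coming precisely from $G_1\neq G_2$ (the two solutions sweep different intervals into $x_N$), which can be absorbed only when $\mu_1(0)(J_{\max})+\mu_2(0)(J_{\max})<\min(g_1)/{\rm Lip}(g_1)$, i.e.\ when the mass of the data in a punctured left neighbourhood of $x_N$ is small. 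Your sketch does not address this feedback term at all.

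Second, and as a consequence, the iteration cannot proceed with uniform steps of length $\beta$ fixed by the no-crossing condition alone. The local constant \eqref{C_1est} depends on the initial mass distribution near $x_N$ through $\mu_j(0)(J_{\max})$, not only on $TV(\mu_j(0))$, so restarting with a fixed $\beta$ gives step constants that depend on the detailed structure of the measures at each restart and need not be uniformly bounded. The paper chooses the partition $0=T_0<T_1<\dots$ adaptively via the mass condition \eqref{muu_max}, so the $\Delta T_k$ are not bounded below, and then proves (Lemmas \ref{Lem_a} and \ref{Lem_numberofsteps}) that the number of such steps needed to advance by a fixed time $T_{int}$ is controlled by $TV(\mu_1(0))+TV(\mu_2(0))$ alone, because every short step must transport at least a fixed quantum $L$ of mass into $x_N$. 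This counting argument is the missing ingredient that converts the local estimate into \eqref{Eq_stabestimate} with $\alpha,\beta$ depending only on the quantities listed in the theorem.
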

The proof of Theorem \ref{StabilityThm} is presented in Section \ref{Sec_Stabilityp0}.  Note that, for simplicity, we consider only case $p=0$, postponing the full result to further work.

\section{Proof of the stability theorem in case $p=0$}

\label{Sec_Stabilityp0}

In this chapter we prove Theorem \ref{StabilityThm}. We consider, namely, the system of equations
\begin{eqnarray}
\partial_t \mu(t) + \partial_x (g_1(v(t))\bold{1}_{x \neq x_i}(x) \mu(t)) &=& 0,\label{eq_munonlinp0}\\
g_1(v(t))  \frac {D\mu(t)}{D\mathcal{L}^1} (x_i^+) &=& c_i(v(t)) \int_{\{x_i\}} d\mu(t), \label{eq_bcnonlinp0}\ \ \ \ i = 0, \dots ,N\\
\mu(0) &=& \mu_0,\label{eq_icnonlinp0}
\end{eqnarray}
which is a simplification of system \eqref{eq_munonlin}-\eqref{eq_icnonlin} obtained by taking $p=0$.

To prove Theorem \ref{StabilityThm} we take two $\rho_{MT}$-measure-transmission solutions $\mu_1(t),\mu_2(t)$, denote  $v_j(t):=\int_{\{x_N\}} d\mu_j(t)$ for $j\in \{1,2\}$ and  proceed in the following steps:
\begin{enumerate}
\item We prove a 'superposition principle' (see \cite{ambrosio, CrippaPHD}) for system  \eqref{eq_munonlinp0}-\eqref{eq_icnonlinp0}, which allows us to express its solutions as certain combinations over characteristics called \emph{superposition solutions}.
\item We obtain an estimate of $\int_0^T |v_1(t)-v_2(t)|dt$ in terms of $\rho_{MT}(\mu_1(0),\mu_2(0))$ and $\int_U d\mu_1(0), \int_U d\mu_2(0)$, where $U$ is some neighborhood of $x_N$ (Nonlinear Estimate).
\item We obtain an estimate of $\rho_{MT}(\mu_1(t), \mu_2(t))$ for small $t$ in terms of $\int_0^t |v_1(s) - v_2(s)|ds$ and $\rho_{MT}(\mu_1(0),\mu_2(0))$ (Linear Estimate). 
\item We substitute the Nonlinear Estimate into the Linear Estimate to obtain an estimate of $\rho_{MT}(\mu_1(t),\mu_2(t))$ in terms of $\rho_{MT}(\mu_1(0),\mu_2(0))$ for small $t$. 
\item We prolong the estimate to large $t$.  
\end{enumerate}
\begin{remark}
\label{Rem16}
\rm Steps 2-5, presented above, are based solely on the fact that every measure-transmission solution can be represented as superposition solution, i.e. in terms of formulas \eqref{eq_defeta}-\eqref{eq_defeta2}. Thus, estimate \eqref{Eq_stabestimate} holds true for every pair of measure-valued functions $\mu_1,\mu_2 : [0,T) \to \mathcal{M}(\mathbb{R})$, which satisfy \eqref{eq_defeta}-\eqref{eq_defeta2}.
In particular, if the definition of measure-transmission solutions is modified in a way, which preserves the superposition principle, then stability estimate \eqref{Eq_stabestimate} remains valid.
This comment is motivated by the fact that uniqueness criteria ii)-iii) of Definition \ref{def_distrsol}, introduced in \cite{OUR}, which are an interpretation of the measure-transmission conditions \eqref{eq_bcnonlin}, are somewhat artificial.  More natural uniqueness criteria in the definition of solutions are studied in \cite{MyPHD}, where also, in contrast to \cite{OUR}, \emph{detailed} proofs of existence and uniqueness of measure-transmission solutions  are provided. As noted above, the stability estimate \eqref{Eq_stabestimate} carries over also to that case. 
\end{remark}

\subsection{Superposition principle}
In this section we show that measure-transmission solutions can be represented in terms of characteristics. 
Let, namely,  $T_{max}$, $G$ and $\tau(x_b)$, where $x_b \in \mathbb{R}$, be defined by
\begin{equation}
\label{Eq_defTmax2}
T_{max}:= \frac{\min_{i \in \{1,2,\dots,N\}} |x_{i}-x_{i-1}|} {\sup(g_1)},
\end{equation} 
\begin{equation}
G(t):= \int_0^t g_1(v(s)) ds,
\end{equation}
\begin{equation}
\label{Eq_deftautau}
\tau(x_b) := \inf \{t \in [0,\infty): x_b + G(t) \in  \{x_0,x_1,\dots,x_N\} \}.
\end{equation} 
Let, moreover, $X(x_b,0,r,\cdot)$ be, for $r\ge \tau(x_b)$, an absolutely continuous solution of equation $\dot{x}=\bold{1}_{x \neq x_i} g_1(v)$ given by formula
\begin{equation}
\label{Eq_defX}
X(x_b,0,r,t) := \begin{cases}
x_b + G(t)   &\mbox{ for } t \le \tau(x_b),\\
x_b + G(\tau(x_b))  &\mbox{ for } \tau(x_b) < t \le r, \\
x_b + G(\tau(x_b)) + G(t) - G(r)  &\mbox{ for } r < t \le T.
\end{cases}
\end{equation}
We interpret  $X(x_b,0,r,\cdot)$ as  the unique characteristic generated by $g_1(v)$ with a branching time $r$, see Figure \ref{Fig_charex3}. 
We obtain the following result.
\begin{proposition}[Superposition principle]
\label{Prop_superprinciple}
Let $\mu$ be a $\rho_{MT}$-superposition solution of  \eqref{eq_munonlinp0}-\eqref{eq_icnonlinp0}. Then for every bounded Borel function $\phi \in \mathcal{B}^b(\mathbb{R})$ and $T<T_{max}$ with $T_{max}$ given by \eqref{Eq_defTmax2} we have
\begin{equation}
\label{eq_defeta}
\int_{\mathbb{R}} \phi d\mu(T) = \int_{\mathbb{R}} \left( \int_{[0,T]} \phi(X(x_b,0,r,T)) d\eta_{x_b}(r) \right) d\mu(0)(x_b),
\end{equation}
where 
\begin{equation}
d\eta_{x_b}(r) := \begin{cases} 
e^{-\int_{\tau(x_b)}^T c_{\lambda}(v(s))ds} \delta_T(dr) + c_{\lambda}(v(r))e^{-\int_{\tau(x_b)}^r c_{\lambda}(v(s))ds} \bold{1}_{[\tau(x_b),T]}(r)dr\\
 \mbox{\quad \quad \quad \quad if } x_{\lambda-1} < x_b \le x_\lambda \mbox{ for some } \lambda \in \{1,\dots,N-1\} \mbox{ and } \tau(x_b)\in [0,T],\\
\delta_T(dr) \mbox{ otherwise.}
\end{cases}
\label{eq_defeta2}
\end{equation}
\end{proposition}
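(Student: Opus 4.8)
The plan is to exhibit the right-hand side of \eqref{eq_defeta} as an explicit candidate curve $\nu(\cdot)$, to check that $\nu$ is a $\rho_{MT}$-measure-transmission solution of \eqref{eq_munonlinp0}--\eqref{eq_icnonlinp0} in the sense of Definition \ref{def_distrsol}, and then to conclude $\mu=\nu$ from the uniqueness part of Theorem \ref{ThmNonlin}. Throughout I keep the given solution $\mu$ fixed together with its data $v(t)=\int_{\{x_N\}}d\mu(t)$, $G$, $\tau$ and the curves $X(\cdot,0,r,\cdot)$ from \eqref{Eq_defX}; once $v$ is frozen, $g_1(v(t))$ and $c_i(v(t))$ are prescribed functions of $t$ and the problem is linear. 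The restriction $T<T_{max}$ from \eqref{Eq_defTmax2} is used decisively: since $\min_i|x_i-x_{i-1}|/\sup(g_1)$ bounds the time needed to traverse any gap, every characteristic meets at most one of the points $x_0,\dots,x_N$ on $[0,T]$, so the single hitting time $\tau(x_b)$ and single branching time $r$ appearing in \eqref{Eq_defX}--\eqref{eq_defeta2} already encode the entire history of a trajectory (a point $x_b\in(x_{\lambda-1},x_\lambda]$ can reach only $x_\lambda$, and after leaving it cannot reach $x_{\lambda+1}$ before time $T$).

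First I would record that for each fixed $x_b$ and each fixed exit time $r$ the curve $t\mapsto X(x_b,0,r,t)$ is an absolutely continuous integral curve of the time-dependent, discontinuous field $b(t,x):=g_1(v(t))\mathbf{1}_{x\neq x_i}(x)$, i.e. $\dot X=b(t,X)$ for a.e. $t$: on the open intervals $(x_{\lambda-1},x_\lambda)$ and $(x_\lambda,x_{\lambda+1})$ the speed is $g_1(v(t))$, while during the stationary phase $X\equiv x_\lambda$ the indicator annihilates the velocity. Moreover, for $t\le r$ the value $X(x_b,0,r,t)$ is independent of $r$, which makes the family $\{\nu(t)\}$ defined by \eqref{eq_defeta} the push-forward, under evaluation at time $t$, of a single trajectory measure $d\mu(0)(x_b)\,d\eta_{x_b}(r)$. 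Hence $\nu$ is a superposition solution in the sense of \cite{ambrosio,CrippaPHD}, and the standard computation $\frac{d}{dt}\int\phi(t,X_t)\,d\Theta=\int(\partial_t\phi+b\,\partial_x\phi)\,d\nu(t)$, integrated in $t$ against $\phi\in C_c^\infty$ and followed by Fubini, shows that $\nu$ satisfies the weak formulation i) of Definition \ref{def_distrsol} with $p\equiv0$. Item iii) and $\nu(0)=\mu(0)$ follow immediately, since $X(x_b,0,r,0)=x_b$ and $\eta_{x_b}$ is a probability measure.

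The heart of the matter is item ii), which is precisely what distinguishes $\eta_{x_b}$ from every other exit-time law yielding a weak solution of the continuity equation. Here I would compute both sides of the transmission relation by hand. Writing $m_i(t)=\int_{\{x_i\}}d\nu(t)$, the atom of $\eta_{x_b}$ at $r=t$ gives $m_i(t)=\int_{\{x_b\in(x_{i-1},x_i]:\,\tau(x_b)\le t\}} e^{-\int_{\tau(x_b)}^{t}c_i(v(s))\,ds}\,d\mu(0)(x_b)$, and since the absolutely continuous part of $\eta_{x_b}$ carries exactly the factor $c_i(v(r))e^{-\int_{\tau}^{r}c_i}$, the instantaneous outflow from $x_i$ at time $r$ equals $c_i(v(r))\,m_i(r)$. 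On the other hand, the mass deposited on $(x_i,x_i+h)$ at time $t$ consists of the trajectories that branched at times $r$ with $G(t)-G(r)\in(0,h)$; the change of variables $y=x_i+G(t)-G(r)$, $dy=g_1(v(r))\,dr$, shows that $\nu(t)$ is absolutely continuous just to the right of $x_i$, with $\frac{D\nu(t)}{D\mathcal L^1}(x_i^+)=c_i(v(t))\,m_i(t)/g_1(v(t))$. Multiplying by $g_1(v(t))$ yields $g_1(v(t))\frac{D\nu(t)}{D\mathcal L^1}(x_i^+)=c_i(v(t))\,m_i(t)$, which is exactly ii). The same computation with $c_N\equiv0$ gives $m_N(t)=v(t)$, the self-consistency needed to view $\nu$ as a solution of the original, $v$-dependent system, so that Theorem \ref{ThmNonlin} applies.

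Having verified i)--iii), the uniqueness in Theorem \ref{ThmNonlin} forces $\mu=\nu$ on $[0,T_{max})$, which is \eqref{eq_defeta}--\eqref{eq_defeta2} for every $T<T_{max}$. I expect the transmission computation of the previous paragraph to be the genuine obstacle: the continuity equation alone does not determine the solution at the degenerate points $x_i$ — this is exactly the ambiguity responsible for Example \ref{Ex_stabnonstab} — so the whole weight of the statement rests on matching the right-hand flux density at $x_i^+$ to the survival law encoded in $\eta_{x_b}$, and on the attendant bookkeeping (left-continuity at $x_i$, the atom-versus-density splitting, and the uniform choice of $\varepsilon(t^*)$ making the limit $\lim_{x\to x_i^+}$ legitimate). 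The residual points — Borel measurability of $x_b\mapsto\tau(x_b)$ and of the inner integral in \eqref{eq_defeta}, finiteness and $\rho_{MT}$-continuity of $\nu$, and the exclusion of a second branching — all follow from $g_1>0$ and $T<T_{max}$ and are routine.
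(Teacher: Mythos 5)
Your overall strategy is genuinely different from the paper's. The paper does not re-derive the solution: it takes the explicit formulas (17)--(21) and Definition 6.1 of \cite{OUR}, which by Theorems \ref{ThmNonlinF}--\ref{ThmNonlin} already describe the unique solution, and verifies \eqref{eq_defeta} by computing both sides for the three generating families of indicator functions $\mathbf{1}_A$ with $A\subset(x_{i-1}+T,x_i)$, $A=\{x_i\}$, $A\subset(x_i,x_i+T]$ (first for $g_1\equiv1$, then for general $g_1$ via the time change $\tilde t=\int_0^t g_1$). You instead treat the right-hand side of \eqref{eq_defeta} as a candidate curve $\nu$, check that it satisfies Definition \ref{def_distrsol}, and invoke uniqueness. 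That route is viable and arguably more self-contained, since it does not lean on the internal formulas of \cite{OUR}; your identification of condition ii) as the feature that singles out $\eta_{x_b}$ among all exit-time laws compatible with the continuity equation is exactly right, and your computation of the density at $x_i^+$ is the correct one.

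There is, however, one genuine gap: the appeal to Theorem \ref{ThmNonlin}. Its uniqueness statement concerns the nonlinear problem, in which the velocity is $g_1(v_\nu(t))$ with $v_\nu(t)=\int_{\{x_N\}}d\nu(t)$ computed from the solution itself, whereas your $\nu$ is built with the frozen coefficient $g_1(v(t))$, $v(t)=\int_{\{x_N\}}d\mu(t)$. To conclude that $\nu$ solves the nonlinear problem you need $v_\nu=v$, i.e. $\mu(t)(\{x_N\})=\int_{[x_N-G(t),x_N]}d\mu(0)$. You assert this (``the same computation\dots gives $m_N(t)=v(t)$''), but your computation only evaluates $\nu(t)(\{x_N\})$; equating it with $\mu(t)(\{x_N\})$ is precisely the instance $\phi=\mathbf{1}_{\{x_N\}}$ of the identity \eqref{eq_defeta} you are trying to prove (it is formula \eqref{eq_vj}, which the paper derives \emph{from} Proposition \ref{Prop_superprinciple}), so as written the step is circular. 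Two repairs are available: either derive $\mu(t)(\{x_N\})=\int_{[x_N-G(t),x_N]}d\mu(0)$ directly from Definition \ref{def_distrsol} applied to $\mu$ (using $c_N=0$ and test functions concentrated near $x_N$) before invoking nonlinear uniqueness, or --- cleaner --- observe that $\mu$ and $\nu$ are both measure-transmission solutions of the \emph{linear} problem with the prescribed time-dependent coefficient $g_1(v(t))$ and appeal to uniqueness for that linear problem, which the construction in \cite{OUR} provides but which is not literally the statement of Theorem \ref{ThmNonlin}. With either repair your argument closes.
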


\begin{remark}
\rm
By the general superposition principle for continuity equation, see \cite[Theorem 6.2.2]{CrippaPHD}, we obtain that there \emph{exist} measures $\eta_{x_b}$ such that \eqref{eq_defeta} holds. Proposition \ref{Prop_superprinciple} provides, in addition, an explicit formula for $\eta_{x_b}$, which is useful in subsequent computations. 
\end{remark}

\begin{proof}[Proof of Proposition \ref{Prop_superprinciple}]
It is a simple calculation that $\eta_{x_b}$ is a probability measure for every $x_b$. 
Thus, it remains to show that the left-hand side (LHS) of \eqref{eq_defeta}, calculated using formulas (17)-(20) and Definition 6.1 from \cite{OUR}, equals the right-hand side (RHS) of \eqref{eq_defeta} calculated explicitly using formula \eqref{eq_defeta2}. We proceed in two steps: $g_1 \equiv 1$ and arbitrary $g_1$. In the following, for fixed solution $\mu$, we denote $c_i(s):=c_i(v(s))$, $i \in \{0,1,\dots,N\}$ and $g_1(s):=g_1(v(s))$. Functions $h_i$ are defined by formula (18) from \cite{OUR} and by 'characteristics end in A' we mean that $X(x_b,0,r,T) \in A$.
\\ \, \\
\noindent{\bf Step 1} ($g_1 \equiv 1$). We begin with three special cases. 
\begin{enumerate}[(a)]
\item $\phi = \bold{1}_A$ with $A \subset (x_{i-1}+T,x_i)$ for some $i \in \{0,1,\dots,N\}$. Then characteristics ending in $A$ have the shape as in the left panel of Figure \ref{Fig_charex3}. We obtain
\begin{figure}[htbp] 
\begin{center}
\includegraphics[width=10cm]{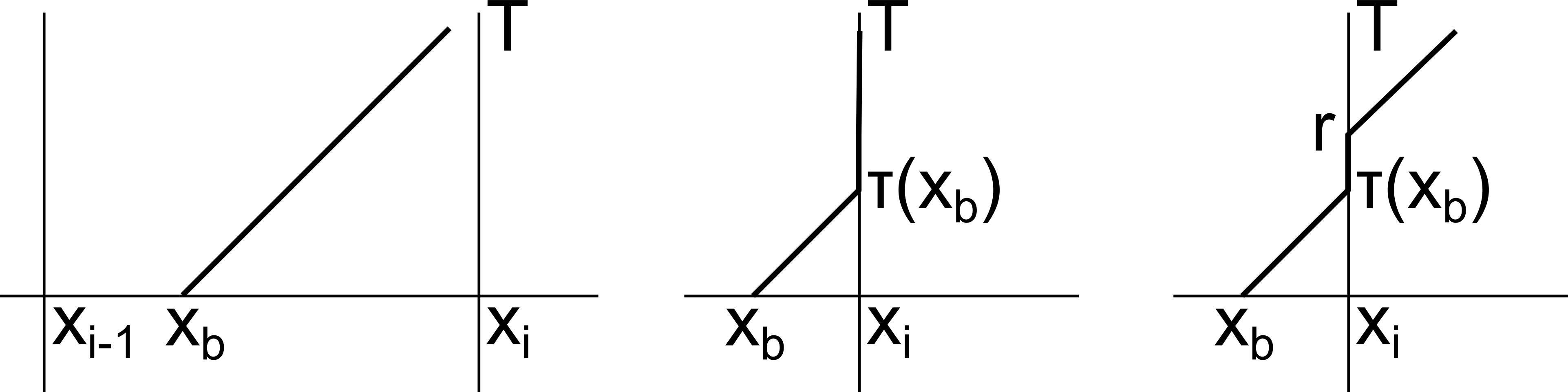}
\caption{Three types of characteristics of system  \eqref{eq_munonlinp0}-\eqref{eq_icnonlinp0} for $g_1 \equiv 1$. In the first case $\tau(x_b)> T$ (left panel). In the second case, $0 \le \tau(x_b) \le T$ and $r \ge T$ (middle panel). In the last case $0 \le \tau(x_b) \le r <T$ (right panel).}
 \label{Fig_charex3}
\end{center}
\end{figure}
\begin{eqnarray*}
RHS &=& \int_{\mathbb{R}} \left(\int_{[0,T]} \bold{1}_{A}(X(x_b,0,r,T))\delta_T(dr) \right) d\mu(0)(x_b)=\int_{\mathbb{R}}  \bold{1}_{A}(x_b+T) d\mu(0)(x_b) =  \int_{A - T}  d\mu(0)(x_b),\\
LHS &=& \mu(T)(A) = \int_{A - T}  d\mu(0)(x_b),
\end{eqnarray*}
where $A-T=\{x: x+T \in A \}$ and we used formula (19) from \cite{OUR} to calculate LHS.
\item $\phi = \bold{1}_A$ with $A={\{x_i\}}$ for some $i \in \{0,\dots,N\}$. Characteristics ending in $A$ have the shape as in the middle panel of Figure \ref{Fig_charex3}. We obtain
\begin{eqnarray*}
RHS &=& \int_{[x_i - T, x_i]} \Bigg(\int_{[\tau(x_b),T]} \bold{1}_{\{x_i\}} (X(x_b,0,r,T))  e^{-\int_{\tau(x_b)}^T c_i(s)ds} \delta_T (dr) 
  \Bigg)  d\mu(0)(x_b) \\
&=& \int_{[x_i - T, x_i]} \left( \bold{1}_{\{x_i\}} (X(x_b,0,T,T))  e^{-\int_{\tau(x_b)}^T c_i(s)ds} \right)   d\mu(0)(x_b), \\
LHS &=& \mu(T)(\{x_i\}) = e^{-\int_0^T c_i(s)ds}\int_{\{x_i\}} d\mu(0) + \int_{(0,T]} h_i(dr) e^{-\int_r^T c_i(s)ds}\\
&=& e^{-\int_0^T c_i(s)ds}\int_{\{x_i\}} d\mu(0) + \int_{[x_i-T,x_i)} e^{-\int_{\tau(x_b)}^T c_i(s)ds} d\mu(0)(x_b)\\
&=& \int_{[x_i-T,x_i]} e^{-\int_{\tau(x_b)}^T c_i(s)ds} d\mu(0)(x_b),
\end{eqnarray*}
where we used formulas (18) and (20) from \cite{OUR} to calculate LHS. 
\item $\phi=\bold{1}_A$ with $A \subset (x_i,x_{i}+T]$ for some $i \in \{0,1,\dots,N\}$. Here, the characteristics assume the shape depicted in the right panel of Figure \ref{Fig_charex3}. As a result, 
\begin{eqnarray*}
RHS &=& \int_{[x_i - T, x_i]} \Bigg(\int_{[\tau(x_b),T]}  \bold{1}_{A} (X(x_b,0,r,T))  c_i(r) e^{- \int_{\tau(x_b)}^r c_i(s)ds}  dr \Bigg)  d\mu(0)(x_b),\\
LHS &=& \int_{T + x_i - A} f_i(r)dr = \int_{T + x_i - A}  c_i(r) \left(\int_{\{x_i\}} d\mu(r)(x_b)\right) dr \\
&=& \int_{T+x_i-A} c_i(r) \left[e^{-\int_0^r c_i(s)ds} \int_{\{x_i\}} d\mu(0) + \int_{(0,r]} h_i(d\tau) e^{-\int_{\tau}^r c_i(s)ds} \right] dr \\
&=& \int_{[x_i-T,x_i]} \int_{\tau(x_b)}^T \bold{1}_{A} (x_i+T-r) e^{-\int_{\tau(x_b)}^r c_i(s)ds} c_i(r) dr d\mu(0)(x_b)\\
&=& \int_{[x_i - T,x_i]} \int_{\tau(x_b)}^T c_i(r) e^{- \int_{\tau(x_b)}^r c_i(s)ds}\bold{1}_{A} (X(x_b,0,r,T)) dr d\mu(0)(x_b),
\end{eqnarray*}
where $T + x_i - A := \{t: x_i + T- t \in A\}$ and we used in turn formulas (19), (18), (20) from \cite{OUR} as well as the Fubini theorem to compute LHS.  
\end{enumerate}
We observe that in every case $RHS = LHS$. 
Since functions of the form $(a),(b),(c)$ generate the whole set of Borel-measureable functions on $\mathbb{R}$, we conclude. \\

\noindent{\bf Step 2} (arbitrary $g_1$). We use \cite[Definition 6.1]{OUR} and handle similarly as in the proof of \cite[Theorem 6.2]{OUR}.
Namely, we define 
\begin{eqnarray*}
\tilde{t}(t) &:=& \int_0^t g_1(s)ds, 
\quad d\tilde{t} := g_1(t)dt,\\
\tilde{c}_i(\tilde{t}) &:=&  \frac {c_i(t(\tilde{t}))}{g_1(t(\tilde{t}))}, \quad i = 0,1, \dots, N,\\
\tilde{X}(x,0,\tilde{r}(r),\tilde{s}(s)) &:=& X(x,0,r,s),\\
\tilde{\mu}(\tilde{t}(t)) &:=& \mu(t).
\end{eqnarray*}
Due to this transformation, $\tilde{\mu}$ satisfies equation \eqref{eq_munonlinp0} with velocity $\tilde{g}_1 \equiv 1$. Thus, using Step 1, we can write
\begin{eqnarray*}
\int_{\mathbb{R}} \phi d\mu(T) = \int_{\mathbb{R}} \phi d\tilde{\mu} (\tilde{T}) = 
\int_{\mathbb{R}} \left(\int_{[0,\tilde{T}]}  \phi(\tilde{X}(x_b,0,\tilde{r},\tilde{T})) d\tilde{\eta}_{x_b} (\tilde{r}) \right) d{\tilde{\mu}}(0)(x_b).
\end{eqnarray*}
Now, we transform the inner integral, using the change of variables defined above. There are two cases, depending on the value of parameter $x_b$. 
\begin{itemize}
\item $\tilde{\eta}_{x_b} = \delta_{\tilde{T}} (d\tilde{r})$. Then
\begin{eqnarray*}
\int_{[0,\tilde{T}]}  \phi(\tilde{X}(x_b,0,\tilde{r},\tilde{T})) d\tilde{\eta}_{x_b} (\tilde{r}) &=&  \phi(X(x_b,0,T,T))= \int_{[0,T]}  \phi(X(x_b,0,r,T) \delta_T (dr).
\end{eqnarray*}
\item 
$\tilde{\eta}_{x_b} =  e^{-\int_{\tilde{\tau}(x_b)}^{\tilde{T}} \tilde{c}_i(\tilde{s})d\tilde{s}} \delta_{\tilde{T}}(d\tilde{r}) + \tilde{c}_i(\tilde{r}) e^{- \int_{\tilde{\tau}(x_b)}^{\tilde{r}} \tilde{c}_i(\tilde{s}) d\tilde{s}} \bold{1}_{[\tilde{\tau}(x_b),\tilde{T}]}(\tilde{r})d\tilde{r}$. Then
\begin{eqnarray*}
&&\int_{[0,\tilde{T}]}  \phi(\tilde{X}(x_b,0,\tilde{r},\tilde{T})) d\tilde{\eta}_{x_b} (\tilde{r}) \\
&=&\int_{[0,\tilde{T}]}  \phi(X(x_b,0,r,T)) \left[ e^{-\int_{\tilde{\tau}(x_b)}^{\tilde{T}} \tilde{c}_i(\tilde{s})d\tilde{s}} \delta_{\tilde{T}}(d\tilde{r}) + \tilde{c}_i(\tilde{r}) e^{- \int_{\tilde{\tau}(x_b)}^{\tilde{r}} \tilde{c}_i(\tilde{s}) d\tilde{s}} \bold{1}_{[\tilde{\tau}(x_b),\tilde{T}]}(\tilde{r})d\tilde{r} \right] \\
&=&\int_{[0,T]} \phi(X(x_b,0,r,T)) \left[e^{-\int_{\tau(x_b)}^T c_i(s)ds} \delta_T(dr) + c_i(r) e^{-\int_{\tau(x_b)}^r c_i(s)ds} \bold{1}_{[\tau(x_b),T]} (r)dr \right].
\end{eqnarray*}
\end{itemize}
Hence,
\begin{eqnarray*}
\int_{\mathbb{R}} \phi d\mu(T) = \int_{\mathbb{R}} \phi d\tilde{\mu} (\tilde{T}) = \int_{\mathbb{R}} \left(\int_{[0,\tilde{T}]} \phi(\tilde{X}(x_b,0,\tilde{r},\tilde{T})) d\tilde{\eta}_{x_b} (\tilde{r}) \right) d{\tilde{\mu}}(0)(x_b)\\
= \int_{\mathbb{R}} \left(\int_{[0,{T}]} \phi({X}(x_b,0,{r},{T})) d{\eta}_{x_b} ({r}) \right) d{\mu}(0)(x_b).
\end{eqnarray*}

\end{proof}

\begin{remark}
A similar calculation, omitted here for simplicity, allows us to prove that for every $\rho_{MT}$-measure-transmission solution of system \eqref{eq_munonlin}-\eqref{eq_icnonlin} and for every $\phi \in \mathcal{B} (\mathbb{R})$ and $t \in [0,T]$, $T<T_{max}$, we have 
\begin{equation}
\int_{\mathbb{R}} \phi d\mu(t) = \int_{\mathbb{R}} \left( \int_{[0,T]}  e^{\int_0^t p(s,X(x_b,0,r,s))ds}\phi(X(x_b,0,r,t)) d\eta_{x_b}(r) \right) d\mu(0)(x_b),
\end{equation}
where $\eta_{x_b}$ is defined by \eqref{eq_defeta2}.
\end{remark}

\subsection{Nonlinear estimate}
\label{Sec_Nonlinear_Estimate_p0}
Our goal here is to estimate $\int_0^T |v_1(t)-v_2(t)|dt$ in terms of $\rho_{MT} (\mu_1(0),\mu_2(0))$ where $T<T_{max}$ and $T_{max}$ is given by \eqref{Eq_defTmax2}.
To this end, we observe that by Proposition \ref{Prop_superprinciple} $v_j$ can be expressed by
\begin{equation}
\label{eq_vj}
v_j(t) = \int_{[x_N - \int_0^t g_1(v_j(s))ds,x_N]} d\mu_j(0) = \int_{[x_N - G_j(t),x_N]} d\mu_j(0),
\end{equation}
where
\begin{equation}
\label{Den_Gj}
G_j(t):=\int_0^t g_1(v_j(s))ds,
\end{equation}
and use the fact that for $p=0$
\begin{equation}
\label{Cor_ming1}
\min(g_1):= \min_{j \in \{1,2\}} \inf_{t \in [0,\infty)} g_1(v_j(t)) > 0
\end{equation}
due to boundedness of $v_j$ and continuity as well as positivity of $g_1$. 

Denote $\min(G_j):=\min(G_1,G_2)$ and $\max(G_j):=\max(G_1,G_2)$. Using \eqref{eq_vj} we obtain
\begin{eqnarray*}
\int_0^T |v_1(t) - v_2(t)| dt = \int_0^T \left| \int_{[x_N - G_1(t),x_N]} d\mu_1(0) - \int_{[x_N - G_2(t),x_N]} d\mu_2(0) \right|dt &\le& \\
\int_0^T \left| \int_{[x_N - \min(G_j),x_N]} d(\mu_1(0) - \mu_2(0)) \right| dt + \int_0^T \int_{[x_N - \max(G_j), x_N - \min(G_j))} d(\mu_1(0) + \mu_2(0))dt &=& \\ I_1 + I_2.
\end{eqnarray*}
Let 
\begin{eqnarray}
B &:=& \left\{t: \int_{[x_N - \min(G_j)(t),x_N]} d(\mu_1(0)-\mu_2(0)) \ge 0\right\}, \nonumber\\
\tau_j(x_b) &:=& \sup \{t>0: x_b+G_j(t)<x_N\}, \label{def_tauiibis}\\
\tau_{min}(x_b) &:=& \min(\tau_1(x_b),\tau_2(x_b)). \nonumber
\end{eqnarray}
Then, by the Fubini theorem (see Figure \ref{Fig_Fubini1}), $I_1$ is equal to
\begin{eqnarray*}
&&\int_0^T (\bold{1}_B - \bold{1}_{\mathbb{R} \backslash B}) \int_{[x_N - \min(G_j),x_N]} d(\mu_1(0) - \mu_2(0))dt \\
&=& \int_{[x_N - \min(G_j(T)),x_N]} \left(\int_{\tau_{\min}(x_b)}^T (\bold{1}_B - \bold{1}_{\mathbb{R} \backslash B}) (t)dt\right) d (\mu_1(0) - \mu_2(0))(x_b)\\ &=& \int_{\mathbb{R}} \chi(x_b) d (\mu_1(0) - \mu_2(0))(x_b).
\end{eqnarray*}

\begin{figure}[htbp]
\centering
\includegraphics[width=5cm]{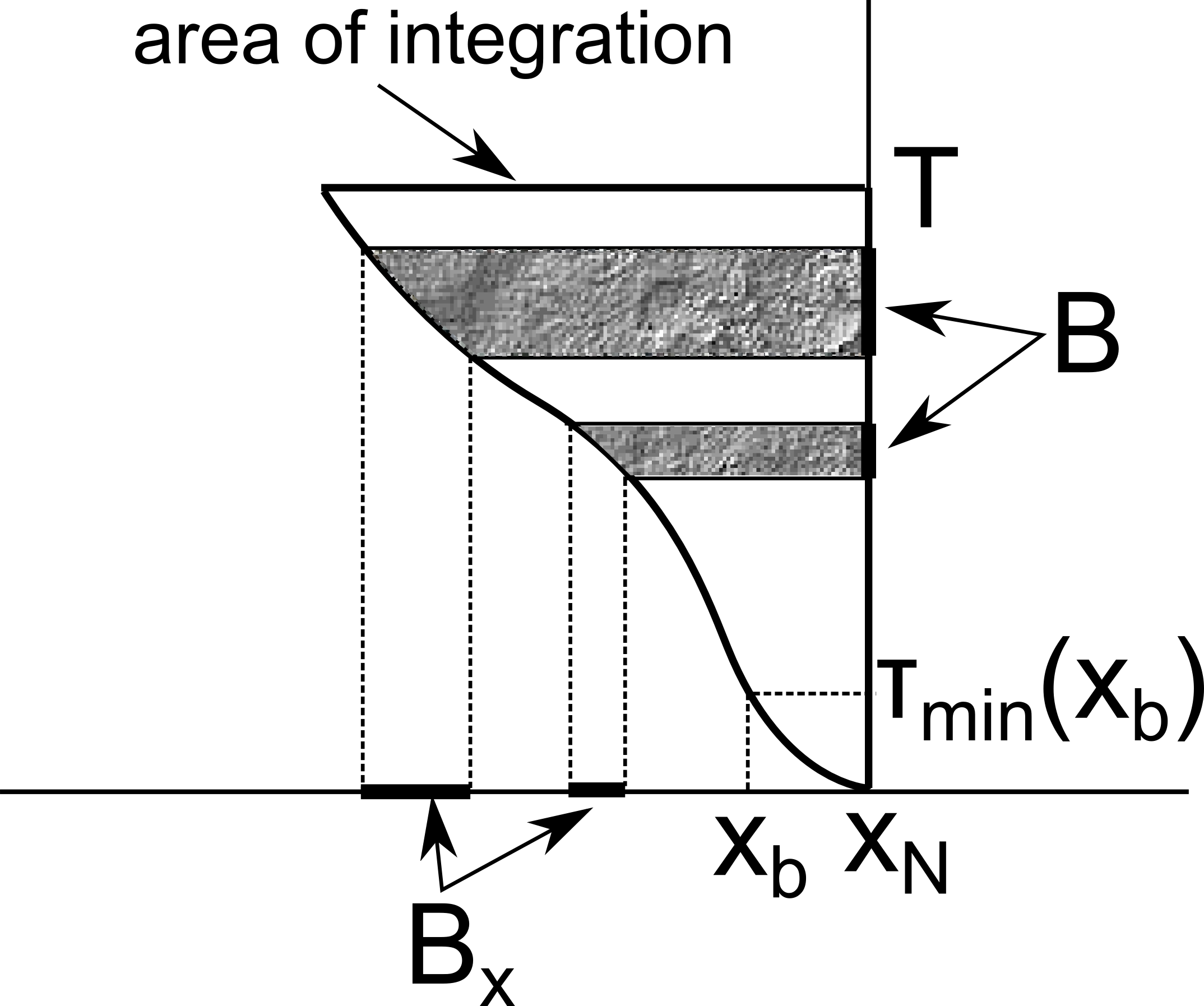}
\caption{The area of integration in $I_1$. The integral can be interpreted as a double integral of function $(\bold{1}_B - \bold{1}_{\mathbb{R} \backslash B})(t)$, which is positive in the shaded region and negative otherwise, with respect to the product measure $(\mu_1(0) - \mu_2(0)) \times dt$. $B_x$ is the set of all $x$ for which $\int_{[x, x_N]} d(\mu_1(0)-\mu_2(0))$ is nonnegative.}
\label{Fig_Fubini1}
\end{figure}
\noindent Function $$\chi(x) := \begin{cases}\int_{\tau_{\min}(x)}^T (\bold{1}_{B} - \bold{1}_{\mathbb{R} \backslash B}) (t)dt &\mbox{ if } x \in [x_N - \min(G_j(T)),x_N], \\ 0 &\mbox{ otherwise} \end{cases}$$
belongs to $W^b_{MT}$. Moreover, $$|\chi(x)| \le T$$ and 
\begin{equation*}
|\chi'(x)| \le |(\bold{1}_B - \bold{1}_{\mathbb{R} \backslash B})| |\tau_{\min}'(x)| \le \frac 1 {\min(g_1)},  
\end{equation*}
where  $\min(g_1)>0$ by \eqref{Cor_ming1}. As a consequence, $\chi = \max\left(\frac {1}{\min(g_1)}, T\right) \chi_1$, where $\chi_1$ belongs to $B_{MT}$.
This leads to conclusion that $$I_1 \le \max\left(\frac {1}{\min(g_1)}, T\right) \rho_{MT} (\mu_1(0),\mu_2(0)). $$ 
Denoting $J_{max}= (x_N - \max(G_1(T),G_2(T)), x_N)$ and $J_{min} = (x_N - \min(G_1(T),G_2(T)),x_N]$ and using the Fubini theorem as well as Proposition \ref{Est_4} we estimate $I_2$ by 
\begin{eqnarray*}
I_2 &\le& \sup_{x \in J_{min}} |\tau_1(x) - \tau_2(x)| (\mu_1(0)(J_{\max}) + \mu_2(0)(J_{\max}))\\
 &\le& (\mu_1(0)(J_{\max}) + \mu_2(0)(J_{\max}))\frac {{\rm Lip}(g_1)}{\min(g_1)} \int_0^T |v_1(t) - v_2(t)|dt.
\end{eqnarray*}
Combining estimates for $I_1$ and $I_2$ we obtain for $T$ small enough
\begin{equation}
\int_0^T |v_1(t) - v_2(t)|dt \le \max\left(\frac {1}{\min(g_1)},T\right) \frac {1}{\left( 1 - \frac {{\rm Lip}(g_1)}{\min(g_1)} (\mu_1(0)(J_{\max})+ \mu_2(0)(J_{max})) \right)} \rho_{MT} (\mu_1(0),\mu_2(0)).
\label{Nonlinear_Estimate}
\end{equation}
Note that the maximum time $T$, up to which estimate \eqref{Nonlinear_Estimate} is valid, strongly depends on $\mu_1(0)$ and $\mu_2(0)$ via $J_{max}$ and cannot be controlled easily. Importantly, however,  $x_N$ does not belong to $J_{\max}$, which will allow us to prolong the stability estimate to arbitrary times, see Section \ref{Sec_LargeTimes}.
\begin{remark}
For $g_1 \equiv 1$ estimate \eqref{Nonlinear_Estimate} turns into
\begin{equation}
\label{Nonlinear_Estimate_g1}
\int_0^T |v_1(t) - v_2(t)|dt \le \max \left(1,T\right) \rho_{MT}(\mu_1(0),\mu_2(0)).
\end{equation}
\end{remark}

\subsection{Linear estimate}
In this section we estimate the quantity $$\rho_{MT}(\mu_1(T),\mu_2(T)) := \sup_{\psi \in B_{MT}} \int_{\mathbb{R}} \psi d(\mu_2(T) - \mu_1(T))$$ for $T<T_{max}$.
The main idea consists in splitting the integral $\int_{\mathbb{R}} \psi d(\mu_2(T) - \mu_1(T))$ into 
\begin{itemize}
\item parts that can be bounded in terms of $\int_0^T |v_1(s) - v_2(s)|ds$ and 
\item parts, which add up to $\int_{\mathbb{R}} \psi^0 d(\mu_2(0) - \mu_1(0))$ for some function $\psi^0 \in  W^b_{MT}$.
\end{itemize}
Then we bound both of them by $C_1(t)\rho_{MT}(\mu_1(0),\mu_2(0))$. 

To achieve this goal, we fix $T<T_{max}$, where $T_{max}$ is given by \eqref{Eq_defTmax2}, and assume without loss of generality (compare Remark \ref{Remark_crosschar}) that $G_1(t)\le G_2(t)$ for $0\le t \le T$, where $G_1,G_2$ are given by \eqref{Den_Gj}. By 
the superposition principle (Proposition \ref{Prop_superprinciple}) we have
\begin{equation}
\label{Eq_defmuit}
\int_{\mathbb{R}} \psi d\mu_j(T) = \int_{\mathbb{R}} \left( \int_{[0,T]} \psi(X_j(x_b,0,r,t)) d\eta^j_{x_b}(r) \right) d\mu_j(0)(x_b),
\end{equation}
where:
\begin{itemize}
\item $j \in \{1,2\}$ enumerates the two solutions,
\item $X_j$ is the characteristic generated by $g_1(v_j)$ (see \eqref{Eq_defX}),
\item
\begin{equation}
\label{Eq_deftautauj}
\tau_j(x_b) := \inf \{t \in [0,\infty): x_b + G_j(t) \in  \{x_0,x_1,\dots,x_N\} \},
\end{equation} 
\item
\begin{equation*}
d\eta^j_{x_b}(r) := \begin{cases} 
e^{-\int_{\tau_j(x_b)}^T c_{\lambda}(v_j(s))ds} \delta_T(dr) + c_{\lambda}(v_j(r))e^{-\int_{\tau_j(x_b)}^r c_{\lambda}(v_j(s))ds} \bold{1}_{[\tau_j(x_b),T]}(r)dr\\
 \mbox{\quad \quad \quad \quad if } x_{\lambda-1} < x_b \le x_\lambda \mbox{ for some } \lambda \in \{1,\dots,N-1\} \mbox{ and } \tau_j(x_b)\in [0,T],\\
\delta_T(dr) \mbox{ otherwise.}
\end{cases}
\end{equation*}
\end{itemize}
\ \newline \ \newline \ \newline \ \newline \ \newline

Using this representation, we split the integral $\int_{\mathbb{R}} \psi d(\mu_2(T) - \mu_1(T))$ into three main components with respect to the starting point of characteristics, $x_b$:
\begin{itemize}
\item Characteristics starting in $(x_{i-1},x_{i}-G_2(T))$ (Fig. \ref{Fig_char2}) -- terms $I_i$,
\item Characteristics starting in $[x_i - G_2(T),x_i - G_1(T))$ (Fig. \ref{Fig_char3}) -- terms $T_i$,
\item Characteristics starting in $[x_i - G_1(T),x_i]$ (Fig. \ref{Fig_char1}) -- terms $D_i$. 
\end{itemize} 
We obtain
\begin{equation*}
\int_{\mathbb{R}} \psi d(\mu_2(T) - \mu_1(T)) = D_0 + (I_1 + T_1 + D_1) + (I_2 + T_2 + D_2) + \dots + (I_N + T_N + D_N),
\end{equation*}
where 
\begin{eqnarray*}
I_i &=&  \int_{(x_{i-1},x_{i}-G_2(T))} \left( H_2(x_b)d\mu_2(x_b) - H_1(x_b)d\mu_1(x_b)\right), \\
T_i &=&  \int_{[x_i - G_2(T),x_i - G_1(T))} \left( H_2(x_b)d\mu_2(x_b) - H_1(x_b)d\mu_1(x_b)\right),\\
D_i &=&  \int_{[x_i - G_1(T),x_i]} \left( H_2(x_b)d\mu_2(x_b) - H_1(x_b)d\mu_1(x_b)\right)\\
\end{eqnarray*}
and we denoted
\begin{equation*}
H_j(x_b)=\left( \int_{[0,T]} \psi(X_j(x_b,0,r,t)) d\eta^j_{x_b}(r) \right) d\mu_j(0)(x_b).
\end{equation*}
Now, we estimate $I_1$, $T_1$ and $D_1$, the calculations for other terms being similar. In the estimates, we further group the characteristics in respect to the branching point $r$ and the point reached by characteristic at time $T$. For convenience, as before, the fact that a characteristic reaches set $A$ at time $T$ will be shortly expressed as 'characteristic ends in $A$'.
\begin{figure}[h!] 
\begin{center}
\includegraphics[width=8cm]{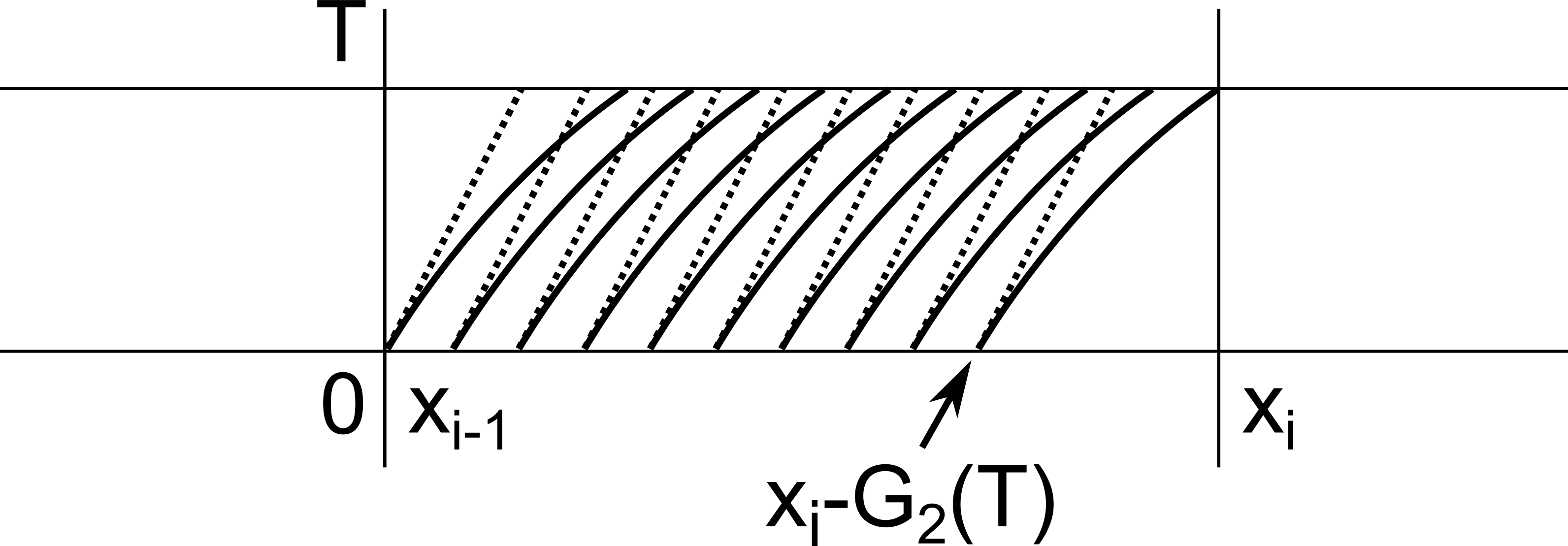}
\caption{Characteristics generated by $g_1(v_1)$ (dotted) and $g_1(v_2)$ (solid) starting in $(x_{i-1}, x_{i} - G_2(T))$ } \label{Fig_char2}
\end{center}
\end{figure}
\begin{figure}[htbp] 
\begin{center}
\includegraphics[width=8cm]{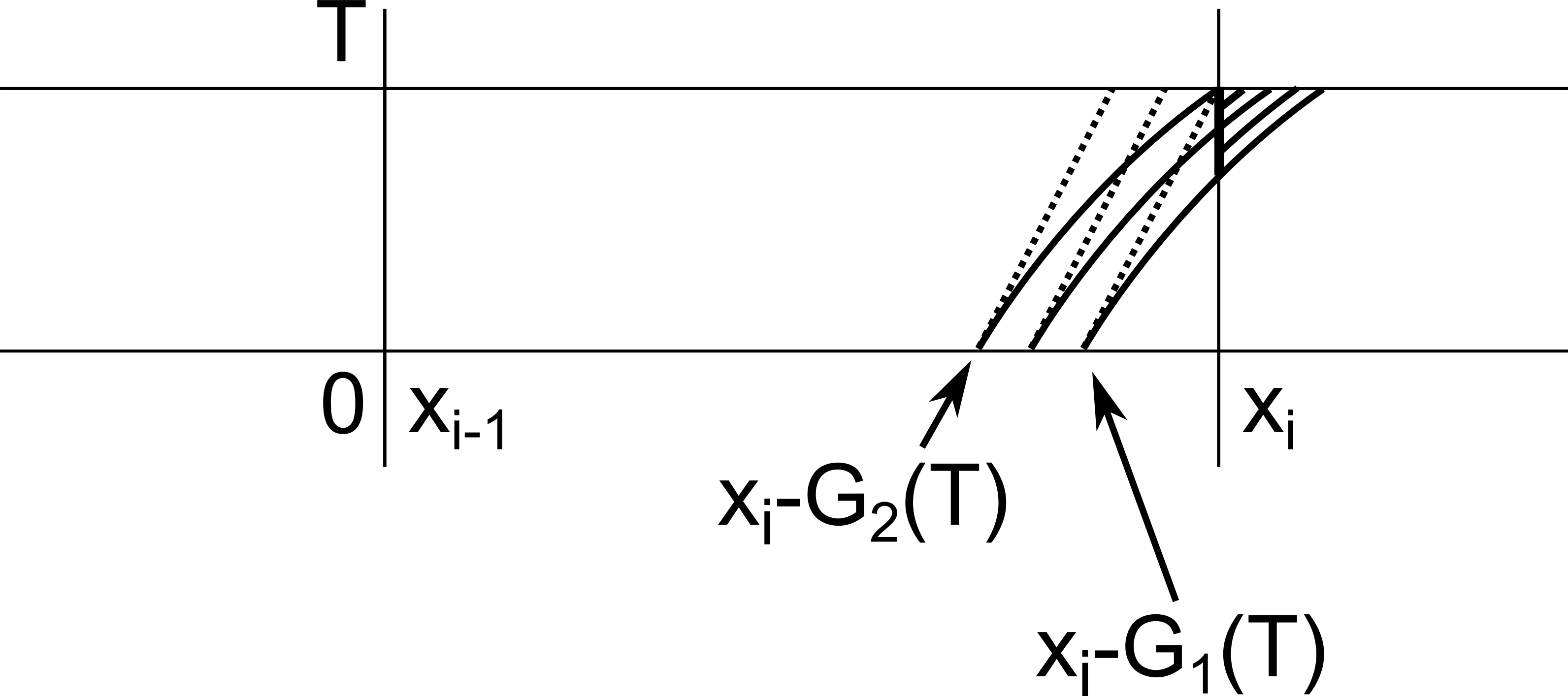} 
\caption{Characteristics generated by $g_1(v_1)$ (dotted) and $g_1(v_2)$ (solid) starting in $[x_{i} - G_2(T), x_{i} - G_1(T))$. Characteristics corresponding to $g_1(v_2)$ arrive in $x_{i}$ before time T and generate fans of characteristics whereas those corresponding to $g_1(v_1)$ do not.} \label{Fig_char3}
\end{center}
\end{figure}
\begin{figure}[h!] 
\begin{center}
\includegraphics[width=4cm]{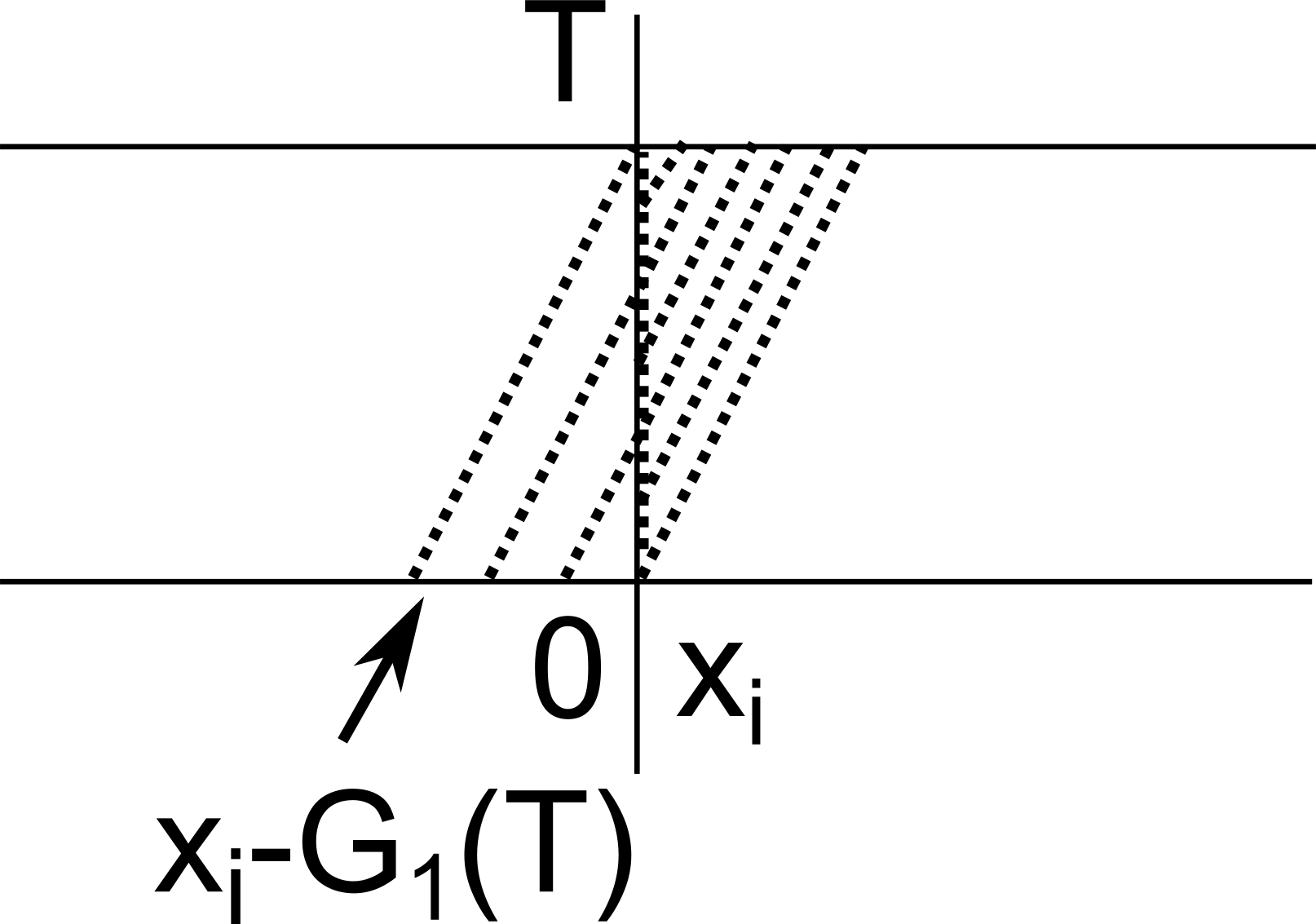}
\includegraphics[width=3.98cm]{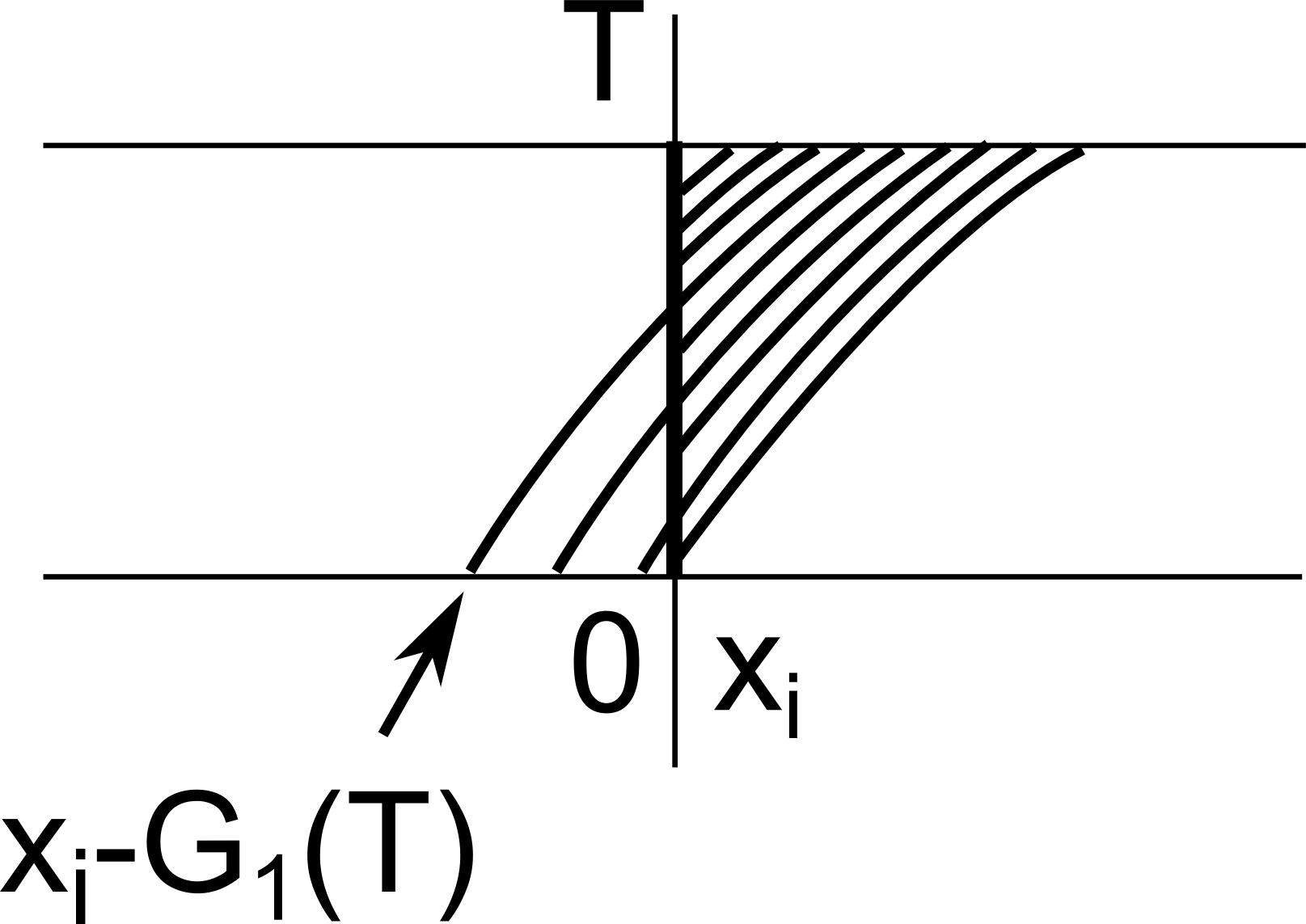}
\includegraphics[width=4.2cm]{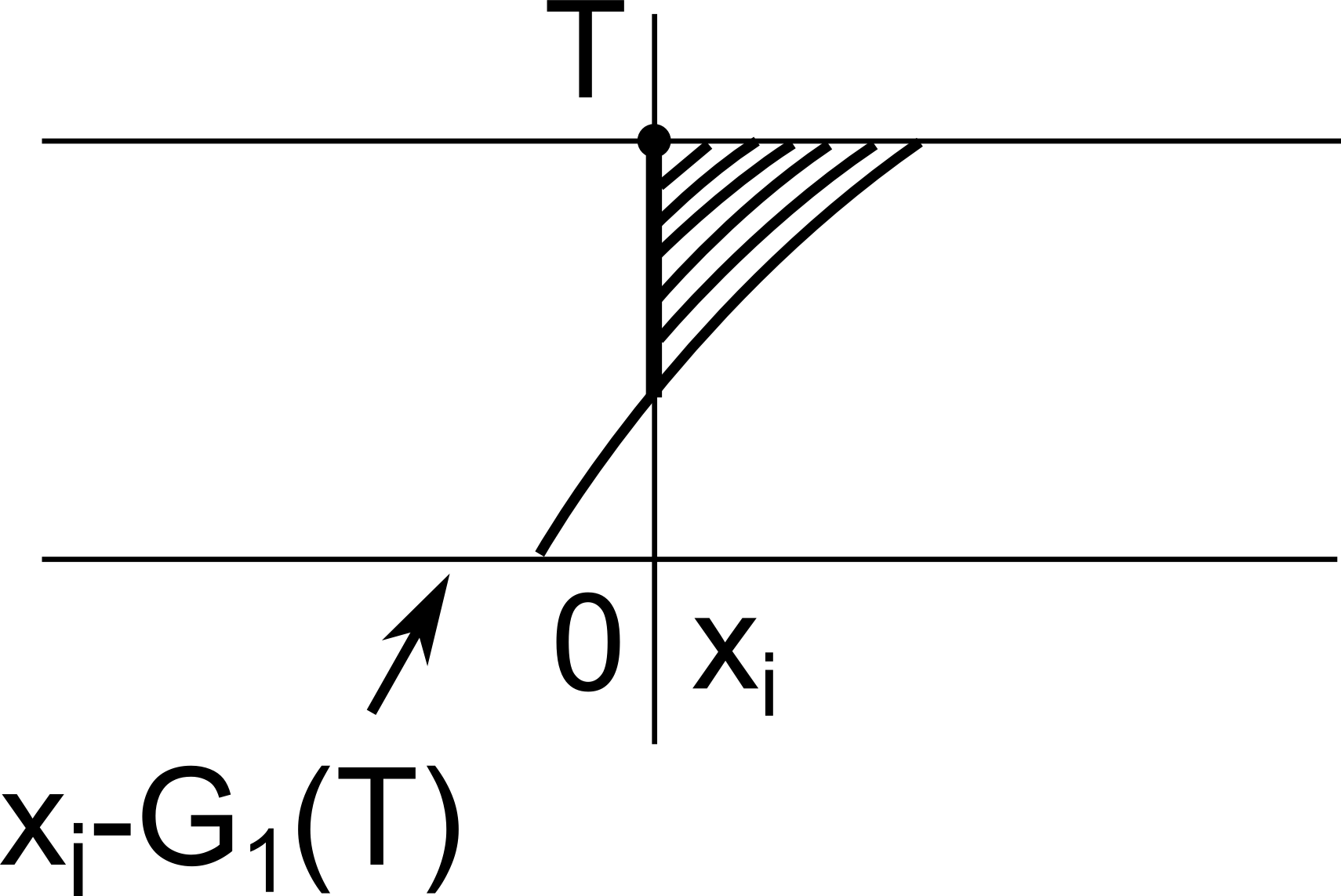}
\caption{Characteristics generated by $g_1(v_1)$ (dotted, left panel) and $g_1(v_2)$ (solid, middle panel) starting in interval $[x_i - G_1(T),x_i]$. After arriving in $x_i$ a given characteristic either spends an arbitrary period of time in $x_i$ before leaving $x_i$ or stays there until time $T$. Thus every characteristic coming to $x_i$ branches generating a fan of characteristics (right panel).}
 \label{Fig_char1}
\end{center}
\end{figure}
\\ \, \\
\noindent {\bf Characteristics starting in $(x_0,x_1-G_2(T))$}
\begin{eqnarray*}
I_1 =  
\int_{(x_0,x_1 - G_2(T))} \left[ \psi \left(x_b+G_2(T)\right) d\mu_2(0)(x_b) - \psi \left(x_b+G_1(T)\right)d\mu_1(0)(x_b) \right] &=& \\ 
\int_{(x_0,x_1 - G_2(T)} \psi \left(x_b + G_1(T)\right) d (\mu_2(0) - \mu_1(0)) &+&\\  \int_{(x_0,x_1 - G_2(T)} \left[\psi\left(x_b + G_2(T)\right) - \psi \left(x_b+G_1(T)\right) \right] d\mu_2(0) &=& U^{I_1} + V^{I_1}. 
\end{eqnarray*}
\noindent {\bf Characteristics starting in $\left[x_1 - G_2(T), x_1-G_1(T) \right)$}\\
For $\mu_1$ these characteristics do not branch before time $T$. 
In case of $\mu_2$, however, they reach $x_1$ before time $T$ and therefore may branch. We obtain
\begin{eqnarray*}
T_1 = \int_{[x_1 - G_2(T), x_1 - G_1(T))} \Bigg\{   \psi(x_1)e^{- \int_{\tau_2(x_b)}^T c_1(v_2(s))ds} d\mu_2(0) + \\ \left(\int_{\tau_2(x_b)}^T e^{- \int_{\tau_2(x_b)} ^r c_1(v_2(s))ds} c_1(v_2(r))\psi\left(x_1 + \int_r^T g_1(v_2(s))ds\right)dr\right) d\mu_2(0) - \psi \left(x_b + G_1(T)\right) d\mu_1(0)  \Bigg\}. 
\end{eqnarray*}
Consecutive terms in the integrand correspond to characteristics related to $\mu_2(0)$ ending in $x_1$, related to $\mu_2(0)$ ending in $(x_1,x_2)$ and related to $\mu_1(0)$. Further calculations lead to
\begin{eqnarray*}
T_1 = \int_{[x_1 - G_2(T), x_1 - G_1(T))} \psi \left(x_b + G_1(T)\right) d(\mu_2(0)-\mu_1(0)) &+& \\
\int_{[x_1 - G_2(T), x_1 - G_1(T))} \left( \psi(x_1) - \psi \left(x_b + G_1(T)\right)\right) d\mu_2(0) &+& \\
\int_{[x_1 - G_2(T), x_1 - G_1(T))} \psi(x_1) \left(e^{-\int_{\tau_2(x_b)}^T c_1(v_2(s))ds} - 1 \right) d\mu_2(0) &+& \\
\int_{[x_1 - G_2(T), x_1 - G_1(T))} \left[ \int_{\tau_2(x_b)}^T c_1(v_2(r)) e^{- \int_{\tau_2(x_b)}^r c_1(v_2(s))ds} \psi\left(x_1 + \int_r^T g_1(v_2(s))ds\right)dr \right] d\mu_2(0) &=& \\
U^{T_1} + V^{T_1}_{1} + V^{T_1}_{2} + V^{T_1}_{3}.&&
\end{eqnarray*}
\noindent{\bf Characteristics starting in $[x_1 - G_1(T),x_1]$}\\
We subdivide those characteristics into three groups, see Fig. \ref{Fig_char11}:
\begin{figure}[htbp]
\begin{center}
\includegraphics[width=3.0cm]{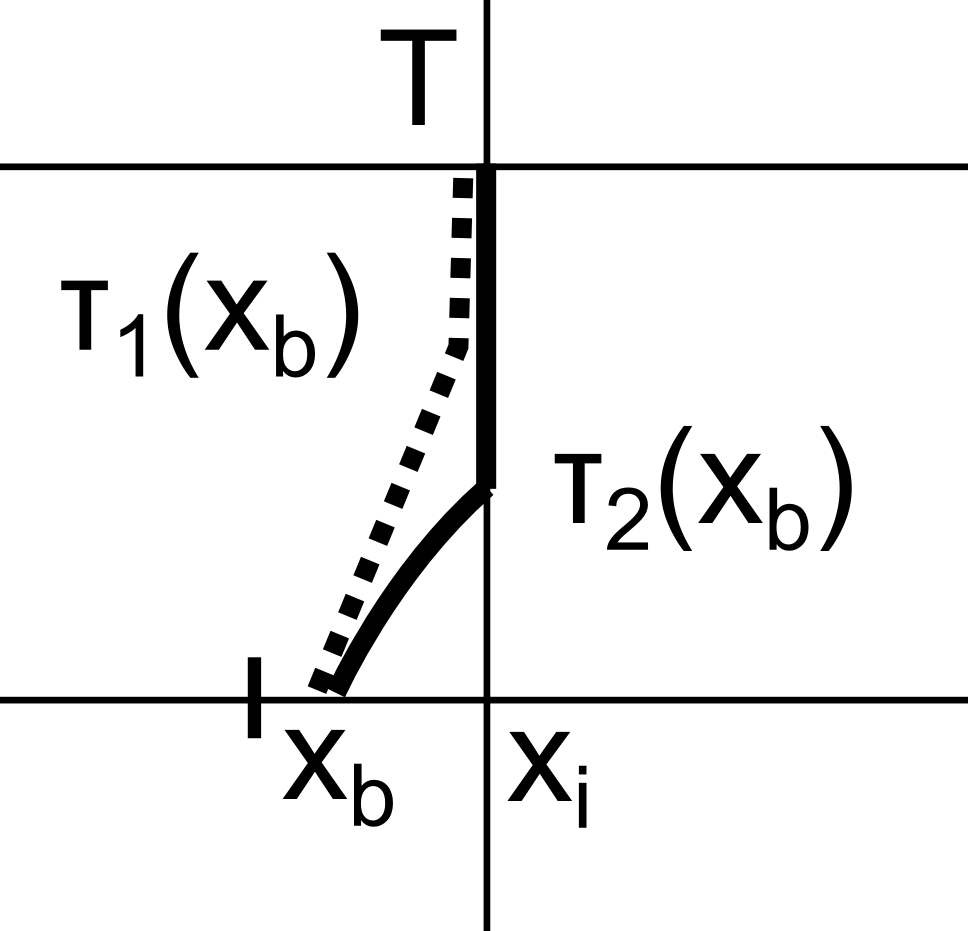} \hspace{1cm}
\includegraphics[width=3.0cm]{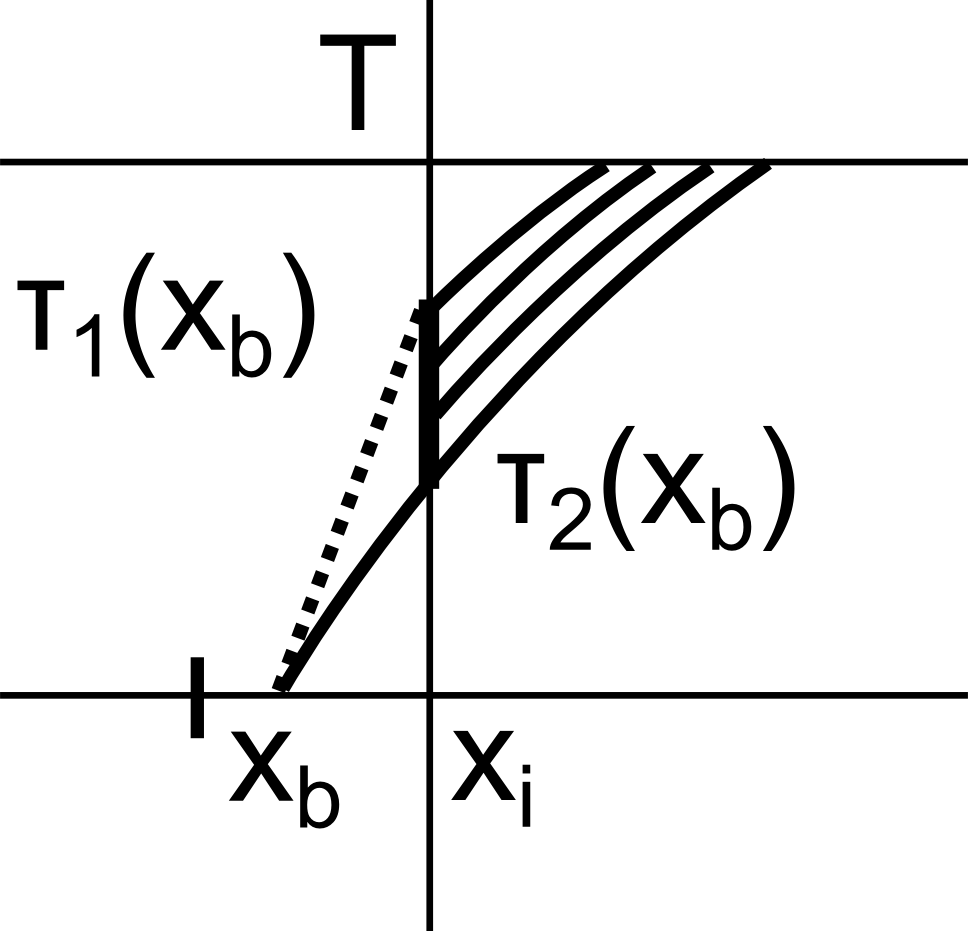} \hspace{1cm}
\includegraphics[width=3.0cm]{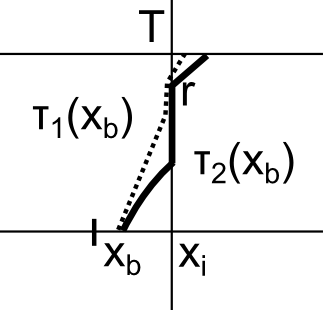}
\end{center}
\caption{Sample characteristics starting in $x_b \in [x_i - G_1(T),x_i]$.
Left panel. Characteristics starting in $x_b$ and both ending in $x_i$. Middle panel. The fan of characteristics arriving at time $\tau_2(x_b)$ and leaving before $\tau_1(x_b)$ is small provided $|\tau_1(x_b) - \tau_2(x_b)|$ is small. Right panel. Characteristics starting in $x_b$ and \emph{both} branching off at time $r$. }
\label{Fig_char11}
\end{figure}

\begin{itemize}
\item those ending in $x_1$, 
\item those ending in $(x_1,x_2)$ and branching off between $\tau_2(x_b)$ and $\tau_1(x_b)$,
\item those ending in $(x_1,x_2)$ and branching off between $\tau_1(x_b)$ and $T$.
\end{itemize}
This leads to:
\begin{eqnarray*}
D_1 = \int_{[x_1 - G_1(T),x_1]} \psi(x_1) e^{- \int_{\tau_2(x_b)}^T c_1(v_2(s))ds} d\mu_2(0)  - \int_{[x_1 - G_1(T),x_1]} \psi(x_1) e^{- \int_{\tau_1(x_b)}^T c_1(v_1(s))ds} d\mu_1(0) + \\
\int_{[x_1 - G_1(T),x_1]} \left( \int_{\tau_2(x_b)}^{\tau_1(x_b)} c_1(v_2(r)) e^{- \int_{\tau_2(x_b)}^r c_1(v_2(s))ds} \psi\left(x_1 + \int_r^T g_1(v_2(s))ds\right)dr \right) d\mu_2(0) + \\
\int _{[x_1 - G_1(T),x_1]} \int_{\tau_1(x_b)}^T \Bigg[ c_1(v_2(r)) e^{- \int_{\tau_2(x_b)}^r c_1(v_2(s))ds} \psi\left(x_1+ \int_r^T g_1(v_2(s))ds\right) dr d\mu_2(0) - \\
c_1(v_1(r)) e^{- \int_{\tau_1(x_b)}^r c_1(v_1(s))ds} \psi\left(x_1 + \int_r^T g_1(v_1(s))ds\right) dr d\mu_1(0)\Bigg] = \\
1^{\circ} + 2^{\circ} + 3^{\circ}.
\end{eqnarray*}
Observe that
\begin{eqnarray*}
1^{\circ} &= &\psi(x_1) \int_{[x_1 - G_1(T),x_1]} e^{- \int_{\tau_1(x_b)}^T c_1(v_1(s))ds} d(\mu_2(0) - \mu_1(0)) + \\
&&\psi(x_1) \int_{[x_1 - G_1(T),x_1]}  \left( e^{- \int_{\tau_2(x_b)}^T c_1(v_2(s))ds}  - e^{-\int_{\tau_1(x_b)}^T c_1(v_1(s))ds} \right)d\mu_2(0) = U^{D_1}_{1} + V^{D_1}_{1}, \\
2^{\circ} &=& V^{D_1}_{2}\\
3^{\circ} &=& \int_{[x_1 - G_1(T),x_1]} \left(\int_{\tau_1(x_b)}^T c_1(v_1(r)) e^{- \int_{\tau_1(x_b)}^r c_1(v_1(s))ds} \psi\left(x_1 + \int_r^T g_1(v_1(s))ds \right)dr \right) d(\mu_2(0) - \mu_1(0)) \\
&+&\int_{[x_1 - G_1(T),x_1]} \int_{\tau_1(x_b)}^T \Bigg[c_1(v_2(r)) e^{-\int_{\tau_2(x_b)}^r c_1(v_2(s))ds} \psi\left(x_1 + \int_r^T g_1(v_2(s))ds\right) -\\
&&    c_1(v_1(r)) e^{- \int_{\tau_1(x_b)}^r c_1(v_1(s))ds} \psi\left(x_1 + \int_r^T g_1(v_1(s))ds\right) \Bigg] dr d\mu_2(0) = U^{D_1}_{2}+V^{D_1}_{3}.
\end{eqnarray*}
\newline
Collecting similar terms we obtain
\begin{eqnarray*}
I_1 + T_1 + D_1 = \left(U^{I_1} + U^{T_1} + U^{D_1} + U^{D_2}\right) + \left(V^{I_1} + V^{T_1}_1 + V^{T_1}_2 + V^{T_1}_3 + V^{D_1}_1 + V^{D_1}_2 + V^{D_1}_3 \right).
\end{eqnarray*}
Next, we estimate U-terms and V-terms using, mostly without explicit reference, Propositions \ref{Est_basic}-\ref{Est_9206}. 

\noindent{\bf U terms}\\
\begin{eqnarray*}
\left(U^{I_1} + U^{T_1} + U^{D_1}_1 + U^{D_1}_2\right) = \int_{(x_0,x_1]} \psi^0(x_b) d(\mu_2(0) - \mu_1(0))(x_b),
\end{eqnarray*}
where
\begin{equation*}
\psi^0(x_b) = 
\begin{cases}
\psi(x_b + G_1(T)) &\mbox{ for } x_0 < x_b < x_1 - G_1(T) \\
\psi(x_1)e^{-\int_{\tau_1(x_b)}^T c_1(v_1(s))ds} + & \\  \int_{\tau_1(x_b)}^T c_1(v_1(r)) e^{- \int_{\tau_1(x_b)}^r c_1(v_1(s))ds} \newline \psi\left(x_1 + \int_r^T g_1(v_1(s))ds \right)dr &\mbox{ for } x_1 - G_1(T) \le x_b \le x_1.
\end{cases}
\end{equation*}
Note that $\psi^0$ is continuous in $x_1 - G_1(T)$ and left-continuous in $x_1$. Let us compute explicitly the derivative of $\psi^0$ for $x_1 - G_1(T) < x_b < x_1$. 
\begin{eqnarray*}
(\psi ^0) ' (x_b) &=& \tau_1'(x_b) c_1(v_1(\tau_1(x_b))) \psi(x_1) e^{- \int_{\tau_1(x_b)}^T c_1(v_1(s))ds}\\
 &-& \tau_1 '(x_b) c_1(v_1(\tau_1(x_b))) \psi \left(x_1 + \int_{\tau_1(x_b)}^T g_1(v_1(s))ds \right)\\
&+&\int_{\tau_1(x_b)}^T c_1(v_1(r)) e^{- \int_{\tau_1(x_b)}^r c_1(v_1(s))ds} \tau_1' (x_b) c_1(v_1(\tau_1(x_b))) \psi \left(x_1 + \int_r^T g_1(v_1(s))ds \right)dr.
\end{eqnarray*}
Using $\tau_1'(x_b) \le \frac 1 {\min(g_1)}$, which is bounded by \eqref{Cor_ming1}, we arrive at
\begin{equation}
|(\psi^0)'(x_b)| \le \frac {1}{\min(g_1)} \left(\sup(\psi)\sup(c_1) + \sup(\psi)\sup(c_1) + \sup(\psi)T(\sup(c_1))^2  \right).
\end{equation}
Similar calculations give analogous estimates for $\psi^0$ on $(x_{i-1},x_i]$ for $i\in \{0,\dots,N\}$. 
Thus,
\begin{eqnarray*}
|\psi^0(x_b)| &\le& \sup(\psi)
\end{eqnarray*}
for all $x_b \in [x_0,x_N]$ and 
\begin{eqnarray*}
\left|\frac {d}{dx_b} \psi^0(x_b)\right| &\le& \max \left(\sup(\psi'), \frac {\sup(\psi)\sup(c)}{\min(g_1)} (2 + T\sup(c))   \right)
\end{eqnarray*}
for $x_b \in (x_0,x_1)\cup (x_1,x_2)\cup \dots \cup(x_{N-1},x_N)$, where $\sup(c) = \max_{i \in\{0,1,\dots,N\}}\sup(c_i)$. 
\\ \, \\
\noindent{\bf V terms}
\begin{eqnarray*}
|V^{I_1}| &\le& {\rm Lip}(\psi) {\rm Lip}(g_1) \mu_2(0)((x_0,x_1 - G_2(T)) \int_0^T |v_2(s) - v_1(s)|ds, 
\end{eqnarray*}
where we used $G_2(T) - G_1(T) \le {\rm Lip}(g_1)\int_0^T |v_2(s) - v_1(s)|ds$. Here and below ${\rm Lip}(\psi)$ is the Lipschitz constant of $\psi$ on interval $(x_{i-1},x_i]$, which is bounded by $1$.
\begin{eqnarray*}
|V^{T_1}_1| &\le& {\rm Lip}(\psi){\rm Lip}(g_1) \mu_2(0)([x_1-G_2(T),x_1 - G_1(T)) \int_0^T |v_2(s) - v_1(s)| ds, \\
|V^{T_1}_2| &\le& |\psi(x_1)| \sup(c_1) \sup ( T - \tau_2(x_b))\mu_2(0)([x_1-G_2(T),x_1 - G_1(T))  \\ &\le& |\psi(x_1)| \sup(c_1) \frac {{\rm Lip}(g_1)}{\min(g_1)} \mu_2(0)([x_1-G_2(T),x_1 - G_1(T)) \int_0^T |v_2(s)-v_1(s)|ds,\\
|V^{T_1}_3| &\le& \sup(\psi) \sup(c_1)  \frac {{\rm Lip}(g_1)}{\min(g_1)} \mu_2(0)([x_1-G_2(T),x_1 - G_1(T)) \int_0^T |v_2(s) - v_1(s)| ds,\\
|V^{D_1}_1| &\le& |\psi(x_1)| \left| \int_{\tau_2(x_b)}^T c_1(v_2(s)ds - \int_{\tau_1(x_b)}^T c_1(v_1(s))ds\right| \mu_2(0)([x_1-G_1(T),x_1]) \\ &\le& 
|\psi(x_1)| \left({\rm Lip}(c_1) \int_0^T |v_2(s) - v_1(s)|ds + \sup(c_1)\sup_{x_b} |\tau_2(x_b) - \tau_1(x_b)| \right)
 \mu_2(0)([x_1-G_1(T),x_1]) \\ &\le&
|\psi(x_1)| \left({\rm Lip}(c_1) + \sup(c_1) \frac {{\rm Lip}(g_1)} {\min(g_1)} \right) \mu_2(0)([x_1-G_1(T),x_1]) \int_0^T |v_2(s) - v_1(s)| ds, \\
|V^{D_1}_2| &\le& \sup_{x_b}|\tau_2(x_b) - \tau_1(x_b)| \sup(c_1) \sup(\psi) \mu_2(0)([x_1-G_1(T),x_1])\\ &\le& \frac{{\rm Lip}(g_1)}{\min(g_1)} \sup(c_1) \sup(\psi) \mu_2(0)([x_1-G_1(T),x_1]) \int_0^T |v_1(s) - v_2(s)|ds.
\end{eqnarray*}
To estimate $V^{D_1}_3$ let us first consider the inner integral
\begin{eqnarray*}
\int_{\tau_1(x_b)}^T \Bigg[c_1(v_2(r)) e^{- \int_{\tau_2(x_b)}^r c_1(v_2(s))ds} \psi\left(x_1 + \int_r^T g_1(v_2(s))ds \right) -\\
 c_1(v_1(r)) e^{- \int_{\tau_1(x_b))}^r c_1(v_1(s))ds} \psi\left(x_1 + \int_r^T g_1(v_1(s))ds \right)   \Bigg] dr = \\
 \int_{\tau_1(x_b)}^T (c_1(v_2(r)) - c_1(v_1(r)) e^{- \int_{\tau_2(x_b)}^r c_1(v_2(s))ds} \psi\left(x_1+ \int_r^T g_1(v_2(s))ds\right) dr + \\
 \int_{\tau_1(x_b)}^T c_1(v_1(r))\left(e^{-\int_{\tau_2(x_b)}^r c_1(v_2(s))ds} - e^{- \int_{\tau_1(x_b)}^r c_1(v_1(s))ds} \right) \psi\left(x_1+ \int_r^T g_1(v_2(s))ds \right)dr + \\
 \int_{\tau_1(x_b)}^T c_1(v_1(r)) e^{-\int_{\tau_1(x_b)}^r c_1(v_1(s))ds} \left(\psi\left(x_1 + \int_r^T g_1(v_2(s))ds \right) - \psi\left(x_1 + \int_r^T g_1(v_1(s))ds\right) \right)dr = \\
 I_\alpha + I_\beta + I_\gamma.
\end{eqnarray*}
Now,
\begin{eqnarray*}
|I_\alpha| &\le& \sup(\psi) {\rm Lip}(c_1) \int_0^T |v_2(s) - v_1(s))| ds \\
|I_\beta| &\le& T\sup (\psi) \sup(c_1) \left(\sup(c_1) \frac {{\rm Lip}(g_1)}{\min(g_1)} +{\rm Lip}(c_1) \right) \int_0^T |v_2(s) - v_1(s)| ds\\
|I_\gamma| &\le& T \sup(c_1){\rm Lip}(\psi){\rm Lip}(g_1) \int_0^T |v_2(s)-v_1(s)|ds,
\end{eqnarray*}
where for $I_{\beta}$ we used the estimate 
\begin{eqnarray*}
\left| e^{- \int_{\tau_2}^r c_1(v_2(s))ds} - e^{- \int_{\tau_1}^r c_1(v_1(s))ds} \right| \le \left| \left(\int_{\tau_2}^r c_1(v_2(s))ds - \int_{\tau_1}^r c_1(v_1(s))ds\right) \right| \le \\
\sup(c_1)\sup|\tau_2 - \tau_1| + {\rm Lip}(c_1)\int_0^T |v_2(s) - v_1(s)|ds \le \\
\left(\frac {\sup(c_1){\rm Lip}(g_1)}{\min(g_1)} + {\rm Lip}(c_1)\right) \int_0^T |v_2(s) - v_1(s)|ds.
\end{eqnarray*}
Thus,
\begin{eqnarray*}
|V^{D_1}_3| &\le& (|I_\alpha| + |I_\beta| + |I_\gamma|) \mu_2(0)([x_1-G_1(T),x_1]) \\ &\le&
 \bigg(\sup(\psi){\rm Lip}(c_1) + \sup(\psi)\sup(c_1)T \left(\sup(c_1) \frac {{\rm Lip}(g_1)}{\min(g_1)} + {\rm Lip}(c_1) \right)\\ && + {\rm Lip}(\psi)\sup(c_1)T {\rm Lip}(g_1) \bigg)  \mu_2(0)([x_1-G_1(T),x_1]) \int_0^T |v_1(s) - v_2(s)|ds.
\end{eqnarray*}
Combining $U$-terms and $V$-terms for $i \in \{0,\dots,N\}$ we obtain
\begin{eqnarray*}
\left| \int_{\mathbb{R}} \psi d(\mu_2(T) - \mu_1(T))\right| \le |D_0| + |I_1 + T_1 + D_1| + |I_2 + T_2 + D_2| + \dots + |I_N + T_N + D_N| \le \\ \int_{\mathbb{R}} \psi^0 d(\mu_2(0)-\mu_1(0)) + \\
\Bigg\{{\rm Lip}(\psi){\rm Lip}(g_1) + 2\sup(\psi)\sup(c)\frac{{\rm Lip}(g_1)}{\min(g_1)} + 2\sup(\psi){\rm Lip}(c) + \\ T\sup(c) \left[\sup(\psi)\left(\sup(c)\frac{{\rm Lip}(g_1)}{\min(g_1)} + {\rm Lip}(c) \right)  + {\rm Lip}(\psi){\rm Lip}(g_1) \right] \Bigg\} TV(\mu_2(0))\int_0^T |v_2(s) - v_1(s)|ds.
\end{eqnarray*}
Above, $TV(\mu_2(0)):= \int_{\mathbb{R}} d\mu_2(0)$.
Taking into account that  $\|\psi^0\|_{W^b_{MT}} \le \|\psi\|_{W^b_{MT}}  \max\left(1, \frac {\sup(c)}{\min(g_1)}(2+T\sup(c))\right) $ we obtain
\begin{eqnarray*}
\rho_{MT} (\mu_1(T),\mu_2(T)) = \sup_{\psi \in B^{1,\infty}_{MT}} \int_{\mathbb{R}} \psi d(\mu_2(T) - \mu_1(T)) \le \\
 \max\left(1, \frac {\sup(c)}{\min(g_1)} (2 + T\sup(c))\right) \rho_{MT}(\mu_1(0),\mu_2(0)) + \\
\left\{{\rm Lip}(g_1) + 2 \sup(c) \frac {{\rm Lip}(g_1)}{\min(g_1)} + 2{\rm Lip}(c) + T\sup(c) \left[\sup(c)\frac {{\rm Lip}(g_1)}{\min(g_1)} + {\rm Lip}(c) + {\rm Lip}(g_1) \right] \right\} \\TV(\mu_2(0)) \int_0^T |v_2(s) - v_1(s)|ds.
\end{eqnarray*}

\noindent This in combination with \eqref{Nonlinear_Estimate} leads to the following local stability result.
\begin{corollary}[\textbf{Local in time stability estimate}]
For $0 < T < T_{max}$, where $T_{max}$ is given by $\eqref{Eq_defTmax2}$, we have
\begin{equation}
\rho_{MT}(\mu_1(T),\mu_2(T)) \le C_1(T) \rho_{MT} (\mu_1(0),\mu_2(0)),
\label{Final_stability_estimate}
\end{equation}
where
\begin{eqnarray}
\label{Final_stability_estimate_constant}
C_1(T) &=& \label{C_1est} \max \left(1,\frac{\sup(c)}{\min(g_1)} (2 + T\sup(c))\right) + \\ 
&&\left\{{\rm Lip}(g_1) + 2 \sup(c) \frac {{\rm Lip}(g_1)}{\min(g_1)} + 2{\rm Lip}(c) + T\sup(c) \left[\sup(c)\frac {{\rm Lip}(g_1)}{\min(g_1)} + {\rm Lip}(c) + {\rm Lip}(g_1) \right] \right\} \times \nonumber \\
&&\times TV(\mu_2(0)) \max \left( \frac {1}{\min(g_1)},T\right) \frac {1}{ \left( 1 - \frac {{\rm Lip}(g_1)}{\min(g_1)} (\mu_1(0)(J_{\max})+ \mu_2(0)(J_{max})) \right)}. \nonumber
\end{eqnarray}
\end{corollary}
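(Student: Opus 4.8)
The statement is a direct consequence of chaining together the two inequalities already established in the two preceding subsections, so the plan is one of careful bookkeeping rather than of new analysis. First I would recall the terminal inequality of the Linear estimate subsection, which bounds $\rho_{MT}(\mu_1(T),\mu_2(T))$ by a leading factor $\max\left(1,\frac{\sup(c)}{\min(g_1)}(2+T\sup(c))\right)$ times $\rho_{MT}(\mu_1(0),\mu_2(0))$, plus a remainder of the form $K(T)\,TV(\mu_2(0))\int_0^T |v_2(s)-v_1(s)|\,ds$, where $K(T)$ abbreviates the explicit bracketed coefficient appearing there (namely ${\rm Lip}(g_1)+2\sup(c)\frac{{\rm Lip}(g_1)}{\min(g_1)}+2{\rm Lip}(c)+T\sup(c)[\sup(c)\frac{{\rm Lip}(g_1)}{\min(g_1)}+{\rm Lip}(c)+{\rm Lip}(g_1)]$). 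The crucial observation is that the only object in this bound not yet expressed in terms of $\rho_{MT}(\mu_1(0),\mu_2(0))$ is the time integral $\int_0^T |v_2(s)-v_1(s)|\,ds$, which is precisely the quantity controlled by the Nonlinear Estimate \eqref{Nonlinear_Estimate}.

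Next I would substitute \eqref{Nonlinear_Estimate} directly into the remainder, replacing $\int_0^T |v_2(s)-v_1(s)|\,ds$ by its bound $\max\left(\frac{1}{\min(g_1)},T\right)\frac{1}{1-\frac{{\rm Lip}(g_1)}{\min(g_1)}(\mu_1(0)(J_{max})+\mu_2(0)(J_{max}))}\rho_{MT}(\mu_1(0),\mu_2(0))$. After this substitution both the leading term and the substituted remainder are proportional to $\rho_{MT}(\mu_1(0),\mu_2(0))$, so collecting them into a single prefactor yields the asserted inequality $\rho_{MT}(\mu_1(T),\mu_2(T))\le C_1(T)\rho_{MT}(\mu_1(0),\mu_2(0))$. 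Matching the two pieces against \eqref{Final_stability_estimate_constant} is then immediate: the $\max(1,\dots)$ summand is exactly the Linear-estimate leading factor, while the long product summand is precisely $K(T)\,TV(\mu_2(0))$ multiplied by the nonlinear factor $\max\left(\frac{1}{\min(g_1)},T\right)\frac{1}{1-\frac{{\rm Lip}(g_1)}{\min(g_1)}(\mu_1(0)(J_{max})+\mu_2(0)(J_{max}))}$.

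The only point demanding genuine attention—and the main obstacle such as it is—concerns the admissible range of $T$ for the substitution, since \eqref{Nonlinear_Estimate} was derived only ``for $T$ small enough''. This caveat encodes the requirement that the denominator $1-\frac{{\rm Lip}(g_1)}{\min(g_1)}(\mu_1(0)(J_{max})+\mu_2(0)(J_{max}))$ be strictly positive, so that the nonlinear factor is finite and nonnegative. I would verify this by noting that $J_{max}=(x_N-\max(G_1(T),G_2(T)),x_N)$ is an open interval which \emph{excludes} the endpoint $x_N$ and which shrinks to the empty set as $T\to 0^+$; hence $\mu_j(0)(J_{max})\to 0$ and the denominator is positive on a sufficiently small time interval. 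This is also exactly the step revealing why \eqref{Final_stability_estimate} cannot yet be extended uniformly up to $T_{max}$: the admissible $T$ depends on $\mu_1(0),\mu_2(0)$ through $J_{max}$, and controlling this dependence is the difficulty deferred to the large-time prolongation argument (Section \ref{Sec_LargeTimes}), where the fact that $x_N\notin J_{max}$ is exploited.
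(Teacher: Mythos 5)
Your proposal is correct and matches the paper's own reasoning: the corollary is obtained exactly by substituting the Nonlinear Estimate \eqref{Nonlinear_Estimate} for $\int_0^T|v_1(s)-v_2(s)|\,ds$ in the final inequality of the Linear estimate subsection and collecting the two resulting multiples of $\rho_{MT}(\mu_1(0),\mu_2(0))$. Your remark on the implicit positivity of the denominator $1-\frac{{\rm Lip}(g_1)}{\min(g_1)}(\mu_1(0)(J_{\max})+\mu_2(0)(J_{\max}))$ for small $T$ likewise agrees with the paper's own caveat following \eqref{Nonlinear_Estimate}.
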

The following two examples show that it is impossible to obtain a stability estimate with $C_1(0^+)=1$ for arbitrary initial data.
\begin{example}
\label{Example_ConstantNot1}
Take $\mu(0) = \delta_{x_1}$ and $\mu^{\varepsilon}(0) = \delta_{x_1 - \varepsilon}$ as well as $g_1 \equiv 1$ and $c_1$ constant.\\
Then, $$\rho_{MT}(\mu(0),\mu^{\varepsilon}(0)) = \varepsilon.$$
Let these measures evolve according to equation \eqref{eq_munonlinp0}. For $t = \varepsilon$ we obtain
\begin{eqnarray*}
\mu^{\varepsilon}(t=\varepsilon) &=& \delta_{x_1},\\
\mu(t=\varepsilon) &=& e^{-c_1\varepsilon} \delta_{x_1} + c_1 e^{-c_1(\varepsilon - (x-x_1))}\bold{1}_{[x_1,x_1+\varepsilon]}(x) \mathcal{L}^1(dx).
\end{eqnarray*}
Hence,
\begin{equation*}
\rho_{MT} (\mu(\varepsilon),\mu^{\varepsilon}(\varepsilon)) = 2\left(1- e^{-c_1 \varepsilon}\right) \simeq 2c_1 \varepsilon = 2 c_1 \rho_{MT} (\mu(0),\mu^{\varepsilon}(0)).
\end{equation*}
We conclude that for every $\varepsilon$ there exists a pair of measures $\mu_1(0)$ and $\mu_2(0)$ for which $$\rho_{MT} (\mu_1(\varepsilon),\mu_2(\varepsilon)) \simeq 2 c_1 \rho_{MT} (\mu_1(0),\mu_2(0)).$$
\end{example}
\begin{example}
\label{Ex_1}
Take two initial measures: $$\mu(0) = \delta_{x_N} + \delta_y,$$ $$\mu^{\varepsilon}(0) = \delta_{x_N - \varepsilon} + \delta_y,$$
where $y \in (x_{N-1},{x_N})$ is such that $|x_N - y| > 1$. Clearly, $\rho_{MT}(\mu(0),\mu^{\varepsilon}(0)) = \varepsilon$. 
Take 
\begin{equation*}
g_1(v) = \begin{cases}
\underline{g} & \mbox{for } v=0, \\
1 & \mbox{for } v=1. \\
\end{cases}
\end{equation*}
Let the measures evolve according to equation \eqref{eq_munonlinp0}. 
For $\overline{t}= \frac {\varepsilon}{\underline{g}}$ we obtain $\mu(\overline{t}) = \delta_{x_N} + \delta_{y+\epsilon \slash \underline{g}}$ and $\mu^{\varepsilon}(\tilde{t}) = \delta_{x_N} + \delta_{y+ \varepsilon}$. Thus,
$$\rho_{MT}\left(\mu\left(\frac{\varepsilon}{\underline{g}}\right),\mu^{\varepsilon}\left(\frac{\varepsilon}{\underline{g}}\right)\right) = \varepsilon \left(\frac 1 {\underline{g}} - 1\right) = \left(\frac {1}{\underline{g}} - 1\right) \rho_{MT} (\mu_1(0),\mu_2(0)).$$
Letting $\varepsilon \rightarrow 0$ leads us to conclusion that $C_1(0^+) \ge \left(\frac {1}{\underline{g}} - 1\right)$.
\end{example}

\begin{remark}
\rm
\label{Remark_crosschar}
It may happen that characteristics generated by $g_1(v_1)$ and $g_1(v_2)$ cross in such a way that although $G_1(T) \le G_2(T)$ there exist certain $x_b$ for which $\tau_1(x_b) < \tau_2(x_b)$ (see Fig. \ref{Fig_cross_char}). The reader will easily modify the proof of the linear estimate to encompass such behavior.

\begin{figure}[htbp]
\begin{center}
\includegraphics[width=4.3cm]{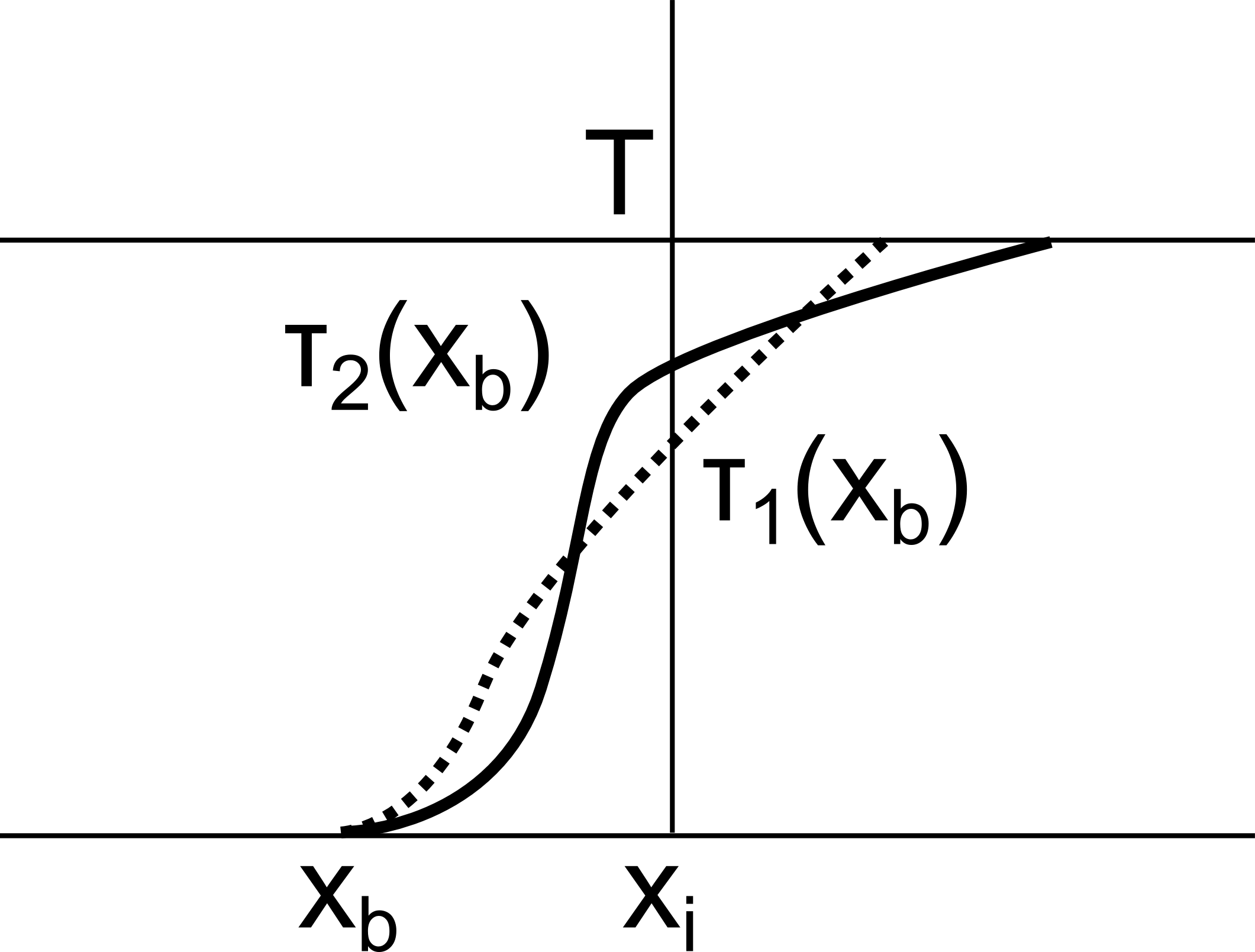}
\caption{Crossing characteristics related to $\mu_1$ (dotted) and $\mu_2$ (solid). Although $\int_0^T g_1(v_1(s))ds < \int_0^T g_1(v_2(s))ds$, there exist certain $x_b$ for which $\tau_1(x_b) < \tau_2(x_b)$.}
\label{Fig_cross_char}
\end{center}
\end{figure}

\end{remark}

\subsection{Stability estimate for large times}
\label{Sec_LargeTimes}
Our goal is to obtain a global in time stability estimate with constant which depends only on the total mass of measures $\mu_1(0), \mu_2(0)$ and not on the initial mass distribution, i.e. the detailed structure of initial measures. We shall iterate estimate \eqref{Final_stability_estimate}. Let, namely, $J_{max}^{t_0}$ be the interval $J_{max}$ corresponding to initial time $t_0$, i.e. 
$J^{t_0}_{max} := (x_N - \max(G_1(t_0,T),G_2(t_0,T)), x_N)$, where $$G_j(t_0,T) := G_j(T) - G_j(t_0)$$ for $j=1,2$ and $G_j$ are given by \eqref{Den_Gj}.
Let, moreover, 
\begin{eqnarray}
&&C_1(t_0,T) := \max \left(1,\frac{\sup(c)}{\min(g_1)} (2 + (T-t_0)\sup(c))\right) +  \label{C_1est2} \\ &&\Bigg\{{\rm Lip}(g_1) + 2 \sup(c) \frac {{\rm Lip}(g_1)}{\min(g_1)} + 2{\rm Lip}(c) + (T-t_0)\sup(c) \left[\sup(c)\frac {{\rm Lip}(g_1)}{\min(g_1)} + {\rm Lip}(c) + {\rm Lip}(g_1) \right] \Bigg\} \times \nonumber \\
&&\times TV(\mu_2(t_0)) \max \left( \frac {1}{\min(g_1)},T-t_0\right) \frac {1}{ \left( 1 - \frac {{\rm Lip}(g_1)}{\min(g_1)} (\mu_1(t_0)(J_{\max}^{t_0})+ \mu_2(t_0)(J_{max}^{t_0})) \right)} \nonumber
\end{eqnarray}
be a generalization of formula \eqref{C_1est} to arbitrary initial times $t_0$.
We choose inductively the time points $0 = T_0 < T_1 < T_2 < \dots$ in such a way that for $j \in \{1,2\}, k = 0,1, \dots$
\begin{equation}
\label{muu_max}
\mu_j(T_k)(J_{max}^{T_k}) \le L := \frac 1 4 \frac {\min(g_1)}{{\rm Lip}(g_1)},
\end{equation}
\begin{equation}
\label{T_max}
\Delta T_k := T_{k+1} - T_k \le \min\left(1, \min_{i \in \{0,\dots,N-1\}} \frac{|x_i-x_{i+1}|}{\sup(g_1)}\right) = \min(1, T_{max}),
\end{equation}
and $\Delta T_k$ are maximal. To obtain the global estimate \eqref{Eq_stabestimate} we observe that 
\begin{itemize}
\item $T_k \to \infty$ as $k \to \infty$,
\item $\rho_{MT}(\mu_1(t),\mu_2(t)) \le  C_1(0,T_1)C_1(T_1,T_2)\times\dots \times C_1(T_{K-1},T_{K}) \rho_{MT}(\mu_1(0),\mu_2(0))$ for $t \in [0,T_K],$ which follows by \eqref{Final_stability_estimate},
\item constants $C_1(T_k,T_{k+1})$ can, by \eqref{C_1est2}-\eqref{T_max}, be bounded in terms of a common constant $\kappa$
, which implies 
\begin{equation}
\label{Eq_pro}
\rho_{MT}(\mu_1(t),\mu_2(t)) \le \kappa^K \rho_{MT}(\mu_1(0),\mu_2(0)).
\end{equation}
\end{itemize}
To finish the proof, we need to estimate, for every given $t>0$, the 'number of iterations' $K$. 
The main difficulty lies in the fact that $\Delta T_k$ are not bounded away from $0$.  The first lemma, which is a consequence of  \eqref{muu_max}-\eqref{T_max}, shows that if $\Delta T_k$ is small, then, informally speaking, the mass which is transported to $x_N$ during the time interval $(T_k,T_{k+1}]$ has to be large.
\begin{lemma}
\label{Lem_a}
Either
$$\Delta T_k = \min\left(1,T_{max}\right)$$ or 
$$\mu_1(T_k)(J^{T_k,lc}_{max}) + \mu_2(T_k)(J^{T_k,lc}_{max}) \ge L,$$ where $lc$ stands for left closure of an interval, i.e. $(a,b)^{lc} = [a,b)$. 
\end{lemma}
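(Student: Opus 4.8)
The plan is to treat the maximality of $\Delta T_k$ as a one–parameter constrained optimization in $T_{k+1}$ and to read off the dichotomy from whichever of the two constraints \eqref{muu_max}, \eqref{T_max} is active. First I would fix $k$ and, for $s\ge T_k$, introduce the ``mass function''
\begin{equation*}
m_j(s) := \mu_j(T_k)\big((x_N - G_{max}(s), x_N)\big), \qquad G_{max}(s) := \max\big(G_1(T_k,s), G_2(T_k,s)\big),
\end{equation*}
so that constraint \eqref{muu_max} for the candidate endpoint $T_{k+1}=s$ reads exactly $m_1(s)\le L$ and $m_2(s)\le L$, since by definition $J_{max}^{T_k}=(x_N-G_{max}(T_{k+1}),x_N)$. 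Because $g_1$ is continuous and strictly positive by \eqref{Cor_ming1}, each $G_j(T_k,\cdot)=G_j(\cdot)-G_j(T_k)$ is continuous and strictly increasing, hence so is $G_{max}$; consequently the open interval $(x_N-G_{max}(s),x_N)$ grows with $s$ and each $m_j$ is nondecreasing.

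The key regularity fact I would establish is that each $m_j$ is left-continuous with right-hand limit
\begin{equation*}
\lim_{s\downarrow s_0} m_j(s) = \mu_j(T_k)\big([x_N-G_{max}(s_0),x_N)\big) = m_j(s_0) + \mu_j(T_k)\big(\{x_N-G_{max}(s_0)\}\big),
\end{equation*}
because as $s\uparrow s_0$ the open intervals increase to $(x_N-G_{max}(s_0),x_N)$, whereas as $s\downarrow s_0$ they decrease to the left-closed interval $[x_N-G_{max}(s_0),x_N)$. This is precisely where the left closure $J^{T_k,lc}_{max}=[x_N-G_{max}(T_{k+1}),x_N)$ enters: it is the limiting set ``just after'' time $T_{k+1}$, and it differs from the open interval $J_{max}^{T_k}$ exactly by the possible atom sitting at its left endpoint.

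Finally I would run the maximality argument. The feasible set of increments is $\{\Delta\in[0,\min(1,T_{max})] : m_1(T_k+\Delta)\le L,\ m_2(T_k+\Delta)\le L\}$, which by monotonicity and left-continuity is a closed interval $[0,\Delta T_k]$, so the maximal increment is attained. If $\Delta T_k=\min(1,T_{max})$ we are in the first alternative of the lemma and there is nothing more to prove. Otherwise $\Delta T_k<\min(1,T_{max})$, so the mass constraint is the active one: for every $s\in(T_{k+1},T_k+\min(1,T_{max})]$ we must have $\max_j m_j(s)>L$, as otherwise a larger feasible increment would exist. Letting $s\downarrow T_{k+1}$ and inserting the right-hand limit above yields
\begin{equation*}
\max_{j\in\{1,2\}} \mu_j(T_k)\big([x_N-G_{max}(T_{k+1}),x_N)\big) = \max_{j} \lim_{s\downarrow T_{k+1}} m_j(s) \ge L,
\end{equation*}
and since both summands are nonnegative this gives $\mu_1(T_k)(J^{T_k,lc}_{max})+\mu_2(T_k)(J^{T_k,lc}_{max})\ge L$, the second alternative. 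The only delicate point — and the place I expect to spend the most care — is the atom bookkeeping at the left endpoint: one must pass to the right-hand limit rather than evaluate $m_j$ at $T_{k+1}$, so that a mass atom located exactly at $x_N-G_{max}(T_{k+1})$, invisible to the open interval but captured by its left closure, is counted, which is exactly what forces the appearance of $J^{T_k,lc}_{max}$ in the statement.
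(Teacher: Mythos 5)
Your proposal is correct and follows essentially the same route as the paper: the paper's proof is a terse case analysis that likewise exploits the maximality of $\Delta T_k$ to conclude that, when the time cap is not active, the mass of the left-closed interval must reach $L$. Your version merely makes explicit the monotonicity/continuity-of-measure argument (the right-hand limit of the open-interval masses equals the mass of the left closure, picking up a possible atom at the left endpoint) that the paper leaves implicit.
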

\begin{proof}
If $\Delta T_k < \min(1,T_{max})$ then either $$\mu_1(T_k)(J_{max}^{T_k}) = L$$ or $$\mu_2(T_k)(J_{max}^{T_k}) = L$$ or both $$\mu_1(T_k)(J_{max}^{T_k})<L \mbox{ and } \mu_2(T_k)(J_{max}^{T_k}))<L.$$ In the latter case either $\mu_1(T_k)(J_{max}^{T_k,lc}) > L$ or $\mu_2(T_k)(J_{max}^{T_k,lc}) > L$ due to the fact that $\Delta T_k$ is the maximum time interval for which \eqref{muu_max} holds.
\end{proof}

Using Lemma $\ref{Lem_a}$, we estimate the number $It_1$ of iterations which are necessary for the whole mass from interval $(x_{N-1},x_N]$ to 'be transported to $x_N$'.
\begin{lemma}
\label{Lem_numberofsteps}
Let $T_{intmin} := \frac {|x_{N}-x_{N-1}|}{\min(g_1)}$ be the maximum time necessary for all characteristics starting from interval $(x_{N-1},x_N]$ to arrive in $x_N$. Then for 
\begin{equation}
\label{eq_it1}
k \ge It_1:=\left\lceil \frac{T_{intmin}}{\min(1,T_{max})} \right\rceil + \left \lceil \frac {\mu_1(0)((x_{N-1},x_N)) + \mu_2(0)((x_{N-1},x_N))}{L} \right \rceil +1
\end{equation}
 we have $\max(G_1(0,T_k),G_2(0,T_k)) \ge x_N-x_{N-1}$. 
\end{lemma}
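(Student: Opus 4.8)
The plan is to argue by contradiction through a counting argument. Suppose that, contrary to the claim, $\max(G_1(0,T_k),G_2(0,T_k)) < x_N - x_{N-1}$ for some $k \ge It_1$. I would then bound the number of index steps $l \in \{0,\dots,k-1\}$ by $It_1 - 1$, contradicting $k \ge It_1$. Since each $G_j$ is nondecreasing, the assumption forces $\max(G_1(T_l),G_2(T_l)) < x_N - x_{N-1}$ for every $l \le k$; consequently each interval $J^{T_l}_{max}$, and its left closure $J^{T_l,lc}_{max}$, has left endpoint strictly greater than $x_{N-1}$ and is therefore contained in $(x_{N-1},x_N)$. In this regime the reasoning leading to \eqref{eq_vj} applies unchanged, because only initial mass lying within travelling distance $G_j(T_k)<x_N-x_{N-1}$ of $x_N$, i.e.\ mass from $(x_{N-1},x_N]$, can ever reach $x_N$; mass branching off $x_{N-1}$ travels less than $x_N-x_{N-1}$ and never arrives. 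Hence $v_j(T_k)-v_j(0) = \mu_j(0)\big([x_N - G_j(T_k),x_N)\big) \le \mu_j(0)\big((x_{N-1},x_N)\big)$.

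Next I would split the steps using Lemma \ref{Lem_a}: every $l$ is either of \emph{full-step type}, with $\Delta T_l = \min(1,T_{max})$, or of \emph{mass type}, with $\mu_1(T_l)(J^{T_l,lc}_{max}) + \mu_2(T_l)(J^{T_l,lc}_{max}) \ge L$. For a full step each $G_j$ grows by at least $\min(g_1)\,\min(1,T_{max})$, so if $n_A$ is their number then $\max_j G_j(T_k) \ge n_A\,\min(g_1)\,\min(1,T_{max})$; combined with the standing assumption and the identity $x_N - x_{N-1} = \min(g_1)\,T_{intmin}$ this gives $n_A \le \lceil T_{intmin}/\min(1,T_{max})\rceil$, the first term of \eqref{eq_it1}.

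For the mass-type steps I would show their number $n_B$ satisfies $n_B \le \lceil (\mu_1(0)+\mu_2(0))((x_{N-1},x_N))/L\rceil$, the second term of \eqref{eq_it1}. The idea is that each such step consumes at least $L$ units of the initial mass sitting in $(x_{N-1},x_N)$, in the sense of transporting it into $x_N$, and that the pieces consumed in distinct steps are disjoint, so their total cannot exceed $(\mu_1(0)+\mu_2(0))((x_{N-1},x_N))$. Disjointness is transparent after pulling the relevant intervals back to $t=0$ along characteristics: the portion reaching $x_N$ during step $l$ corresponds to the initial slab $[x_N - G_j(T_{l+1}), x_N - G_j(T_l))$, and these slabs telescope. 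The step I expect to be the main obstacle lies precisely here, and it stems from the $\max$ in the definition of $J_{max}$: the \emph{slower} of the two solutions does not send all of its $J^{T_l,lc}_{max}$-mass into $x_N$ within the step, leaving a residual that only advances to within ${\rm Lip}(g_1)\int_{T_l}^{T_{l+1}}|v_1-v_2|\,ds$ of $x_N$. I would need to account for this residual carefully, showing that it is absorbed in the immediately following step and therefore not double counted, in order to legitimately pass from ``$\ge L$ mass lies in $J^{T_l,lc}_{max}$'' to ``$\ge L$ disjoint units of initial mass are consumed.''

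Finally, since by Lemma \ref{Lem_a} every step is of at least one of the two types, I would conclude $k \le n_A + n_B \le \lceil T_{intmin}/\min(1,T_{max})\rceil + \lceil (\mu_1(0)+\mu_2(0))((x_{N-1},x_N))/L\rceil = It_1 - 1$, which contradicts $k \ge It_1$. Therefore $\max(G_1(0,T_k),G_2(0,T_k)) \ge x_N - x_{N-1}$ for all $k \ge It_1$, as asserted.
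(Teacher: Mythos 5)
Your proposal reconstructs the paper's own argument step for step: a proof by contradiction in which Lemma \ref{Lem_a} classifies each index $l\le k$ as a full time step or a mass step, the full steps are counted against $T_{intmin}$ (from $G_j(0,T_k)\ge n_A\,\min(g_1)\min(1,T_{max})$ together with the standing assumption $G_j(0,T_k)<x_N-x_{N-1}=\min(g_1)\,T_{intmin}$), the mass steps are counted against the initial mass of $(x_{N-1},x_N)$ via the telescoping pulled-back slabs $[x_N-G_j(0,T_{l}),x_N-G_j(0,T_{l-1}))$, and the pigeonhole $k\le n_A+n_B\le It_1-1$ closes the contradiction; your preliminary observation that no mass branching off $x_{N-1}$ can interfere is also implicitly needed in the paper. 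The one point you single out as the ``main obstacle'' --- that Lemma \ref{Lem_a} delivers $\ge L$ of mass only in the \emph{common} interval $J^{T_l,lc}_{max}$, whose width is $\max_j G_j(T_l,T_{l+1})$, so that for the slower solution the pulled-back set strictly contains its own slab and threatens double counting --- is exactly the point on which the paper is silent: in its proof, alternative b) is simply restated as $\mu_1(0)([x_N-G_1(0,T_l),x_N-G_1(0,T_{l-1})))+\mu_2(0)([x_N-G_2(0,T_l),x_N-G_2(0,T_{l-1})))\ge L$, i.e.\ with each solution's \emph{own} increment, for which disjointness across steps is automatic and no residual bookkeeping is required. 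Your concern is therefore legitimate in the sense that this restated alternative does not follow verbatim from Lemma \ref{Lem_a} as stated, but the paper offers no more detail there than you do; apart from this shared loose end, which you at least flag explicitly, your argument coincides with the paper's.
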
 
\begin{proof}
Suppose that $\max(G_1(0,T_k),G_2(0,T_k)) < x_N - x_{N-1}$. By formula \eqref{Eq_defmuit} $$\mu_j(T_{k-1})([x_N - G_j(T_{k-1},T_k),x_N)) = \mu_j(0)([x_N - G_j(0,T_k),x_N-G_j(0,T_{k-1}))). $$
Furthermore, by Lemma \ref{Lem_a}, for every $l \in \{0,1,\dots,k\}$ either
\begin{enumerate}[a)]
\item $\Delta T_l = \min(1,T_{max})$, where $T_{max}$ is given by \eqref{Eq_defTmax2} \\
or
\item $\mu_1(0)( [x_N-G_1(0,T_l), x_N - G_1(0,T_{l-1}))) + \mu_2(0)( [x_N-G_2(0,T_l), x_N - G_2(0,T_{l-1}))) \ge L.$
\end{enumerate}
Let 
\begin{itemize}
\item $K_1=\{l \in \{1,\dots,k\} : \mbox{ a) holds}\}$ and
\item $K_2=\{l \in \{1,\dots,k\} : \mbox{ b) holds}\}$.
\end{itemize}
By \eqref{eq_it1} for $k \ge \lceil It_1 \rceil$ either $$\#(K_1) > \left\lceil \frac{T_{intmin}}{\min(1,T_{max})} \right\rceil $$  or $$\#(K_2) > \left \lceil \frac {\mu_1(0)((x_{N-1},x_N)) + \mu_2(0)((x_{N-1},x_N))}{L} \right \rceil,$$ where $\#(K_1), \#(K_2)$ are the numbers of elements of $K_1$ and $K_2$, respectively. This means that either
\begin{eqnarray*}
\mu_1(0)((x_{N-1},x_N)) + \mu_2(0)((x_{N-1},x_N)) \ge \\
\sum_{k \in K_1} \mu_1(0)([x_N-G_1(0,T_k),x_N - G_1(0,T_{k-1}))) + \mu_2(0)([x_N-G_1(0,T_k),x_N - G_1(0,T_{k-1}))) > \\ L \left\lceil \frac {\mu_1(0)((x_{N-1},x_N)) + \mu_2(0)((x_{N-1},x_N))}{L} \right\rceil \ge \mu_1(0)((x_{N-1},x_N)) + \mu_2(0)((x_{N-1},x_N))
\end{eqnarray*}
or
$$T_{intmin} \ge \sum_{k \in K_2} \Delta T_k > \left\lceil \frac {T_{intmin}}{\min(1,T_{max})}\right\rceil \min(1,T_{max}) \ge T_{intmin}.$$ 
In both cases we obtain contradiction.
\end{proof}
Now, using the fact that $TV(\mu_j(t)) = TV(\mu_j(0))$ for all $t>0$  by \eqref{eq_defeta}, we obtain that for $k_0 \ge 0$ and every 
$$k \ge It_2 :=  \left\lceil \frac {T_{intmin}}{\min(1,T_{max})} \right\rceil + \left\lceil \frac {TV(\mu_1(0)) + TV(\mu_2(0))}{L} \right\rceil +2$$
we have
\begin{equation}
\label{Eq_comp1}
\max(G_1(T_{k_0},T_{k_0+k}),G_2(T_{k_0},T_{k_0+k})) > x_N-x_{N-1},
\end{equation}
which follows by Lemma \ref{Lem_numberofsteps} applied for initial time $T_{k_0}$.
On the other hand, for $j=1,2$
\begin{equation}
\label{Eq_comp2}
G_j(T_{k_0}, T_{k_0} + T_{int}) \le x_N - x_{N-1},
\end{equation}
where  $T_{int} :=  {|x_{N-1} - x_N|} \slash {\sup(g_1)}$ and we used the definition of $G_j$.  Comparing \eqref{Eq_comp1} and \eqref{Eq_comp2} we obtain that for every $k_0 \ge 0$  
\begin{equation}
\label{eq_Tk0}
T_{k_0} + T_{int} \le T_{k_0+It_2}.
\end{equation}
Hence, iterating \eqref{eq_Tk0}, we obtain $T_{k_0} + t \le T_{k_0+It_2 \left\lceil {t}\slash{T_{int}} \right\rceil }$ and, in particular, $$t \le  T_{It_2 \left\lceil \frac{t}{T_{int}} \right\rceil } = T_K.$$ 
This, by \eqref{Eq_pro}, leads to the following conclusion.

\begin{corollary}
\begin{equation}
\label{Exponential_Stability_Estimate}
\rho_{MT} (\mu_1(t),\mu_2(t)) \le \kappa^{\left(It_2 \left\lceil \frac{t}{T_{int}}\right\rceil \right)} \rho_{MT}(\mu_1(0),\mu_2(0)),
\end{equation}
where 
\begin{eqnarray*}
T_{int} &=& \frac {|x_{N} - x_{N-1}|}{\sup(g_1)},\\
It_2 &=&  \left\lceil \frac {x_N - x_{N-1}}{\min(g_1)\min\left(1, \min_{i \in \{0,\dots,N-1\}} \frac{|x_i-x_{i+1}|}{\sup(g_1)}\right)} \right\rceil + \left\lceil \frac {TV(\mu_1(0)+TV(\mu_2(0))}{L} \right\rceil +2 ,\\
L &=& \frac 1 4 \frac {\min(g_1)}{{\rm Lip}(g_1)}, \\
\end{eqnarray*}
\begin{eqnarray*}
\kappa &=& \max \left(1,\frac{\sup(c)}{\min(g_1)} (2 + \sup(c))\right) + \\ 
&&2\left\{{\rm Lip}(g_1) + 2 \sup(c) \frac {{\rm Lip}(g_1)}{\min(g_1)} + 2{\rm Lip}(c) + \sup(c) \left[\sup(c)\frac {{\rm Lip}(g_1)}{\min(g_1)} + {\rm Lip}(c) + {\rm Lip}(g_1) \right] \right\} \times \nonumber \\
&&\times TV(\mu_2(0)) \max \left( \frac {1}{\min(g_1)},1\right). 
\end{eqnarray*}
This proves Theorem \ref{StabilityThm}.
\end{corollary}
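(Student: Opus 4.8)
The plan is to upgrade the local-in-time bound \eqref{Final_stability_estimate} to the global estimate \eqref{Exponential_Stability_Estimate} by composing it along the sequence of times $0=T_0<T_1<T_2<\dots$ built so that the mass condition \eqref{muu_max} and the length condition \eqref{T_max} hold with each $\Delta T_k$ chosen maximal. First I would record that $\min(g_1)>0$ by \eqref{Cor_ming1}, so all reciprocals $1/\min(g_1)$ occurring in the step lengths and in the constants are finite; moreover the threshold $L=\tfrac14\,\min(g_1)/{\rm Lip}(g_1)$ in \eqref{muu_max} is tuned precisely so that $\frac{{\rm Lip}(g_1)}{\min(g_1)}\big(\mu_1(T_k)(J^{T_k}_{max})+\mu_2(T_k)(J^{T_k}_{max})\big)\le\tfrac12$, keeping the denominator in \eqref{C_1est2} above $\tfrac12$ on every step. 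Combined with \eqref{T_max} this majorises each factor $C_1(T_k,T_{k+1})$ by the single constant $\kappa$ displayed in the statement, and applying \eqref{Final_stability_estimate} successively on $[T_0,T_1],\dots,[T_{K-1},T_K]$ yields \eqref{Eq_pro}, namely $\rho_{MT}(\mu_1(t),\mu_2(t))\le\kappa^{K}\rho_{MT}(\mu_1(0),\mu_2(0))$ whenever $t\le T_K$.

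The main obstacle, and the only genuinely delicate point, is that the step lengths $\Delta T_k$ need not be bounded away from $0$, so $K$ cannot be controlled by a crude ``elapsed time divided by minimal step'' argument; instead one must exploit that a short step is forced only when a definite amount of mass is being swept into $x_N$. This is exactly the dichotomy of Lemma \ref{Lem_a}: for each $k$ either $\Delta T_k$ attains its maximal value $\min(1,T_{max})$, or a chunk of mass of size at least $L$ leaves the relevant interval through $x_N$ during $(T_k,T_{k+1}]$. Since the total mass is finite and, by \eqref{eq_defeta}, conserved in time, the second alternative can occur only boundedly often; quantifying this is the content of Lemma \ref{Lem_numberofsteps}, which shows (using the conserved total variation in place of the mass of a single interval) that after at most $It_2$ consecutive steps the characteristics must have advanced by the full distance $x_N-x_{N-1}$.

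It then remains to trade the index $K$ for elapsed time. Applying Lemma \ref{Lem_numberofsteps} with starting time $T_{k_0}$ gives \eqref{Eq_comp1}, while the definition of $G_j$ furnishes the complementary bound \eqref{Eq_comp2}; comparing the two produces the key recursion \eqref{eq_Tk0}, that is $T_{k_0}+T_{int}\le T_{k_0+It_2}$. Iterating \eqref{eq_Tk0} a total of $\lceil t/T_{int}\rceil$ times shows $t\le T_{It_2\lceil t/T_{int}\rceil}$, so one may take $K=It_2\lceil t/T_{int}\rceil$ in \eqref{Eq_pro}, which is precisely \eqref{Exponential_Stability_Estimate}. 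Finally, rewriting $\kappa^{It_2\lceil t/T_{int}\rceil}=e^{(It_2\log\kappa)\lceil t/T_{int}\rceil}$ and setting $\alpha:=It_2\log\kappa$, $\beta:=T_{int}$ recovers the form \eqref{Eq_stabestimate} asserted in Theorem \ref{StabilityThm}, with $\alpha,\beta$ depending only on the quantities listed there; this completes the proof.
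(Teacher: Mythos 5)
Your proposal is correct and follows essentially the same route as the paper: the same iteration of the local estimate \eqref{Final_stability_estimate} along the maximal times $T_k$ satisfying \eqref{muu_max}--\eqref{T_max}, the same dichotomy from Lemma \ref{Lem_a} and counting argument of Lemma \ref{Lem_numberofsteps}, and the same comparison of \eqref{Eq_comp1} with \eqref{Eq_comp2} yielding the recursion \eqref{eq_Tk0} and hence $K=It_2\lceil t/T_{int}\rceil$. Your explicit remarks that $L$ is tuned so the denominator in \eqref{C_1est2} stays above $\tfrac12$ (explaining the factor $2$ in $\kappa$) and that $\alpha=It_2\log\kappa$, $\beta=T_{int}$ recover \eqref{Eq_stabestimate} merely make precise what the paper leaves implicit.
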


\begin{corollary}
\label{Cor_Nonlinear_v_Est}
For $T < T_{int}$ we have
\begin{equation*}
\int_0^T |v_1(t) - v_2(t)|dt \le 2 \max\left( \frac{1}{\min(g_1)}, T_{int} \right) (It_1) {\kappa}^{It_2} \rho_{MT} (\mu_1(0),\mu_2(0)).
\end{equation*}
\end{corollary}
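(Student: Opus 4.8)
The plan is to localise the Nonlinear Estimate \eqref{Nonlinear_Estimate} to the partition $0=T_0<T_1<\dots$ built in Section \ref{Sec_LargeTimes} and to sum the resulting bounds, exploiting that for $T<T_{int}$ only boundedly many of these subintervals are needed. The feature of \eqref{Nonlinear_Estimate} that prevents its direct use is the denominator $1-\frac{{\rm Lip}(g_1)}{\min(g_1)}\bigl(\mu_1(0)(J_{max})+\mu_2(0)(J_{max})\bigr)$, which reflects the fine structure of the initial data and is not controlled by the total mass alone; the entire purpose of the partition is to keep this quantity uniformly away from zero.

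First I would apply the estimate \eqref{Nonlinear_Estimate}, with initial time $T_k$ and final time $\min(T_{k+1},T)$, on each subinterval, obtaining
$$\int_{T_k}^{\min(T_{k+1},T)}|v_1(t)-v_2(t)|\,dt \le \frac{\max\bigl(\tfrac{1}{\min(g_1)},\,\Delta T_k\bigr)\,\rho_{MT}(\mu_1(T_k),\mu_2(T_k))}{1-\frac{{\rm Lip}(g_1)}{\min(g_1)}\bigl(\mu_1(T_k)(J_{max}^{T_k})+\mu_2(T_k)(J_{max}^{T_k})\bigr)}.$$
By construction \eqref{muu_max} one has $\mu_j(T_k)(J_{max}^{T_k})\le L=\tfrac14\tfrac{\min(g_1)}{{\rm Lip}(g_1)}$ for $j=1,2$, whence $\frac{{\rm Lip}(g_1)}{\min(g_1)}\bigl(\mu_1(T_k)(J_{max}^{T_k})+\mu_2(T_k)(J_{max}^{T_k})\bigr)\le\tfrac12$ and the denominator is bounded below by $\tfrac12$, so the fraction is at most $2$. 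Moreover $\Delta T_k\le\min(1,T_{max})\le T_{max}\le T_{int}$ by \eqref{T_max} and $T_{max}=\min_i|x_i-x_{i-1}|/\sup(g_1)\le T_{int}$, so $\max\bigl(\tfrac{1}{\min(g_1)},\Delta T_k\bigr)\le\max\bigl(\tfrac{1}{\min(g_1)},T_{int}\bigr)$.

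Next I would control $\rho_{MT}(\mu_1(T_k),\mu_2(T_k))$ by the iterated local stability estimate \eqref{Eq_pro}, which gives $\rho_{MT}(\mu_1(T_k),\mu_2(T_k))\le\kappa^{k}\rho_{MT}(\mu_1(0),\mu_2(0))$, and then count the subintervals. For $T<T_{int}$ every partition time $T_k\le T$ satisfies $\max\bigl(G_1(0,T_k),G_2(0,T_k)\bigr)\le\sup(g_1)\,T<\sup(g_1)\,T_{int}=x_N-x_{N-1}$; by the contrapositive of Lemma \ref{Lem_numberofsteps} this forces $k<It_1$. Hence at most $It_1$ subintervals meet $[0,T]$, and for each of them $k<It_1\le It_2$, so $\kappa^{k}\le\kappa^{It_2}$ because $\kappa\ge1$.

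Summing the per-interval bounds over these at most $It_1$ pieces then yields
$$\int_0^T|v_1(t)-v_2(t)|\,dt \le 2\,(It_1)\,\max\Bigl(\tfrac{1}{\min(g_1)},T_{int}\Bigr)\,\kappa^{It_2}\,\rho_{MT}(\mu_1(0),\mu_2(0)),$$
which is exactly the asserted inequality. The only delicate point is the counting step: it uses Lemma \ref{Lem_numberofsteps} to turn the geometric restriction $T<T_{int}$ into the combinatorial bound $k<It_1$ on the number of partition times, and \eqref{muu_max} to keep the denominator uniformly bounded away from zero; the remaining manipulations are the elementary chaining of nonnegative integrals across the partition.
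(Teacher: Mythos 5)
Your proposal is correct and follows essentially the same route as the paper: apply the Nonlinear Estimate on each subinterval $[T_k,T_{k+1}]$ of the partition from Section \ref{Sec_LargeTimes}, use \eqref{muu_max} to bound the denominator below by $\tfrac12$, control $\rho_{MT}(\mu_1(T_k),\mu_2(T_k))$ by the iterated stability estimate, and sum over the at most $It_1$ relevant subintervals. Your write-up is in fact more careful than the paper's two-line proof, in particular in making explicit the counting step via Lemma \ref{Lem_numberofsteps} and the comparison $It_1\le It_2$.
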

\begin{proof}
By \eqref{Nonlinear_Estimate} and \eqref{Exponential_Stability_Estimate} we obtain
\begin{eqnarray*}
\int_{T_k}^{T_{k+1}} |v_1(t) - v_2(t)|dt \le 2\max\left(\frac {1}{\min(g_1)} ,T_{int} \right) \rho_{MT} (\mu_1(T_k),\mu_2(T_{k}))\\ \le 2\max\left(\frac {1}{\min(g_1)} ,T_{int} \right) \kappa^{It_2} \rho_{MT} (\mu_1(0),\mu_2(0)). 
\end{eqnarray*}
Summing from $k=0$ to $k=N-1$, we conclude.
\end{proof}

\begin{remark}
\rm
Time steps in iterations which lead to the global stability estimate \eqref{Exponential_Stability_Estimate} are \emph{different} for every pair of initial measures. This is due to the fact that it is constant 'mass step' that is used rather than constant time step (see Lemma \ref{Lem_a}).
In the end, however, the estimate has the same form for \emph{every} pair of initial measures and depends only on their total variations. 
This is due to the fact that there is a finite potential for small time steps which depends only on the total variation of measures (see Lemma \ref{Lem_numberofsteps}).
\end{remark}

\begin{remark}
\rm
The standard theory of Lipschitz semiflows, see e.g. \cite{BressanHyp}, does not allow us to obtain a global stability estimate from the local one. We recall that a mapping $\Phi: [0,\epsilon] \times [0,T] \times (M,\rho) \rightarrow (M,\rho)$ is called \emph{Lipschitz semiflow} in metric space $(M,\rho)$ if
\begin{itemize}
\item $\Phi(0,t,\mu) = \mu$ for $t\in [0,T]$,
\item  for $t,s_1,s_2$ such that $s_1,s_2,s_1+s_2 \in [0,\epsilon]$ and $t, t+s_1+s_2 \in [0,T]$ 
we have $$\Phi(s_2,t+s_1,\Phi(s_1,t,\mu)) = \Phi(s_1+s_2,t,\mu)$$ (semigroup property),
\item for $t,s_1,s_2$ such that $s_1,s_2 \in [0,\epsilon]$ and $t,t+s_1,t+s_2$ belong to $[0,T]$ we have
\begin{equation}
\label{Est_semiflow}
\rho(\Phi(s_1,t,\mu_1),\Phi(s_2,t,\mu_2)) \le L(\rho(\mu_1,\mu_2) + |s_1-s_2|)
\end{equation}
(Lipschitz continuity).
\end{itemize}
In our case, defining 
\begin{equation}
\label{Eq_Phi}
\Phi(s,t,\mu):= \nu(s), 
\end{equation}
where $\nu(s)$ is the unique solution of problem \eqref{eq_munonlin}-\eqref{eq_icnonlin} with initial condition $\nu(0)=\mu$, we would obtain a semiflow $\Phi$, which would however not be Lipschitz due to the fact that the constant $C_1$ in estimate $\eqref{C_1est}$ cannot be chosen uniformly with respect to $\mu$. Our elementary method of prolongation of the estimate overcomes this difficulty. 
\end{remark}

\begin{remark}
\rm
Stability of measure-transmission solutions of system \eqref{eq_munonlin}-\eqref{eq_icnonlin} with respect to perturbation of the initial condition for general $p$ remains open. 
\end{remark}
\begin{remark}
\label{Rem_whystab}
\rm
The stability result is important from the modelling point of view, since every reasonable model of reality needs to be stable. Moreover, the proof of Theorem \ref{StabilityThm} gives some hints, in the case $p=0$, for construction of a convergent numerical scheme for simulating system \eqref{eq_munonlin}-\eqref{eq_icnonlin}. Such a scheme, based on particle methods, will be presented in a forthcoming paper. 
\end{remark}

\noindent \textbf{Acknowledgements.} The author was partially supported by the International PhD Projects Programme of Foundation for Polish Science operated within the Innovative Economy Operational Programme 2007-2013 (PhD Programme: Mathematical Methods in Natural Sciences). He is grateful to Tomasz Cie\'slak from the Institute of Mathematics, Polish Academy of Sciences for advice and encouragement.

\section*{Appendix. Auxiliary estimates.}

\noindent In this section we gather estimates, which for clarity of the exposition have been omitted from the main text. 
\label{sec_estimates}
\begin{proposition}\ 
\label{Est_basic}
For $x,y \ge 0$ it holds
\begin{enumerate}[i)]
\item $\left|e^x - 1\right| \le |x|e^x$,
\item $\left|e^x - e^y\right| \le |x-y|e^{\max(x,y)}$,
\item $\left| e^{-x} - e^{-y} \right| \le |x-y| e^{-\min(x,y)}$.
\end{enumerate}
\end{proposition}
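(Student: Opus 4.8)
The plan is to treat all three inequalities as Lipschitz-type estimates for the exponential function, each of which follows from a single application of the mean value theorem together with the monotonicity of $t \mapsto e^{t}$. First I would observe that item i) is just the special case $y=0$ of item ii), since $e^{0}=1$ and, for $x \ge 0$, $\max(x,0)=x$; hence it suffices to prove ii) and iii) and then read off i) for free.

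For ii), I would apply the mean value theorem to $f(t)=e^{t}$ on the interval with endpoints $x$ and $y$: there exists $\xi$ between $x$ and $y$ with $e^{x}-e^{y}=e^{\xi}(x-y)$. Taking absolute values gives $\left|e^{x}-e^{y}\right|=e^{\xi}|x-y|$, and since $\xi \le \max(x,y)$ and the exponential is increasing, $e^{\xi}\le e^{\max(x,y)}$, which yields the claim. Equivalently, assuming without loss of generality $y \le x$, one can write $e^{x}-e^{y}=\int_{y}^{x}e^{t}\,dt \le (x-y)\,e^{x}$ because $e^{t}\le e^{x}$ on $[y,x]$; this avoids invoking the mean value theorem and uses only the fundamental theorem of calculus and monotonicity.

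For iii), the same argument applied to $f(t)=e^{-t}$ gives, for some $\xi$ between $x$ and $y$, the identity $e^{-x}-e^{-y}=-e^{-\xi}(x-y)$, so $\left|e^{-x}-e^{-y}\right|=e^{-\xi}|x-y|$; since $\xi \ge \min(x,y)$ and $t \mapsto e^{-t}$ is decreasing, $e^{-\xi}\le e^{-\min(x,y)}$, completing the proof. In integral form, with $y \le x$ one has $e^{-y}-e^{-x}=\int_{-x}^{-y}e^{s}\,ds \le (x-y)\,e^{-y}$ because $e^{s}\le e^{-y}$ on $[-x,-y]$, and $e^{-y}=e^{-\min(x,y)}$.

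There is no genuine obstacle here: all three are elementary bounds, and the only point requiring the slightest care is the correct identification of the endpoint that produces the dominant exponential factor --- $\max(x,y)$ in ii) and $\min(x,y)$ in iii) --- which is dictated precisely by whether the exponential is increasing or decreasing along the interval of integration. I would also note that the hypothesis $x,y \ge 0$ is not actually needed for the argument; it merely records the range in which these estimates are subsequently applied in the body of the paper.
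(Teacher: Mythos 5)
Your proof is correct and is precisely the elementary mean-value/fundamental-theorem argument the paper has in mind: the paper itself dismisses the proof of Proposition \ref{Est_basic} as elementary, and uses the same integral-form bound in its proof of Proposition \ref{Est_3}. One small caveat: your closing remark that the hypothesis $x,y\ge 0$ is superfluous is true for items ii) and iii) but not for item i), since for $x<0$ the inequality $\left|e^x-1\right|\le |x|e^x$ fails (e.g.\ at $x=-1$ one has $1-e^{-1}\approx 0.63 > e^{-1}\approx 0.37$), so i) genuinely requires $x\ge 0$ (or must be restated with $e^{\max(x,0)}$ on the right, as your reduction to ii) with $y=0$ would actually produce).
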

\begin{proof}
Proof is elementary. 
\end{proof}

\begin{proposition}
\label{Est_3}
Let $\xi(t)$ be an arbitrary non-negative Borel function on $[0,T]$. Then 
\begin{equation*}
\sup_{t\in[0,T]}\left| e^{\xi(t)}-1\right| \le \sup_{t\in[0,T]}\left|  \xi(t)\right |  e^{\sup_{t\in[0,T]}\left| \xi(t)\right |}. 
\end{equation*}
\end{proposition}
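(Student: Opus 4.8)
The plan is to reduce this to the pointwise inequality already established in Proposition \ref{Est_basic} i) and then exploit the non-negativity of $\xi$ to pass to suprema. For brevity, write $M := \sup_{t\in[0,T]}|\xi(t)|$; since $\xi \ge 0$ we have $|\xi(t)| = \xi(t)$, so that $0 \le \xi(t) \le M$ for every $t \in [0,T]$.

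First I would fix an arbitrary $t \in [0,T]$ and apply Proposition \ref{Est_basic} i) with $x = \xi(t) \ge 0$, which yields
\begin{equation*}
\left| e^{\xi(t)} - 1 \right| \le |\xi(t)| \, e^{\xi(t)}.
\end{equation*}
Next, I would bound each of the two factors on the right-hand side by its supremal value. The factor $|\xi(t)|$ is dominated by $M$ by definition of $M$, and the factor $e^{\xi(t)}$ is dominated by $e^{M}$ because the exponential is increasing and $\xi(t) \le M$. Combining these,
\begin{equation*}
\left| e^{\xi(t)} - 1 \right| \le |\xi(t)| \, e^{\xi(t)} \le M \, e^{M}.
\end{equation*}

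Since the right-hand side $M e^{M}$ no longer depends on $t$, I would finish by taking the supremum over $t \in [0,T]$ on the left, obtaining $\sup_{t\in[0,T]}|e^{\xi(t)}-1| \le M e^{M}$, which is exactly the claimed estimate. There is no genuine obstacle here: the only point requiring care is that the estimate $e^{\xi(t)} \le e^{M}$ relies on monotonicity of the exponential together with the assumption $\xi \ge 0$, which guarantees $\xi(t) = |\xi(t)| \le M$; were $\xi$ allowed to take negative values this step would fail, so the non-negativity hypothesis is used precisely here.
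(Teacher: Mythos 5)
Your proof is correct and follows essentially the same route as the paper: the paper writes $e^{\xi(t)}-1=\int_0^{\xi(t)}e^s\,ds$ and bounds the integral by $|\xi(t)|\,e^{\sup|\xi|}$, which is exactly the pointwise inequality of Proposition \ref{Est_basic} i) that you invoke before passing to the supremum. Your remark on where non-negativity of $\xi$ enters is accurate and harmless.
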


\begin{proof}
\begin{equation*}
\sup_{t\in[0,T]}\left| e^{\xi(t)}-1\right| =\sup_{t\in[0,T]}\left| \int_0^{\xi(t)} e^s ds\right|\leq \sup_{t\in[0,T]}\left|  \xi(t)\right |  e^{\sup_{t\in[0,T]}\left| \xi(t)\right | }. 
\end{equation*}
\end{proof}

\begin{proposition}
\label{Est_4}
Let $\tau_1,\tau_2$ be defined by \eqref{def_tauiibis}. For $0<T<T_{max}$ we have
$$|\tau_2(x_b) - \tau_1(x_b)| \le \frac {{\rm Lip}(g_1)}{\min(g_1)} \int_0^T |v_2(s) - v_1(s)|.$$
\end{proposition}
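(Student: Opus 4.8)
The plan is to reduce the statement to the defining relation of $\tau_j$ and then exploit the strict positivity and Lipschitz continuity of $g_1$. First I would observe that, since $g_1>0$ (see \eqref{Cor_ming1}), the map $t \mapsto G_j(t)$ is continuous and strictly increasing, so the supremum in \eqref{def_tauiibis} is attained and $\tau_j(x_b)$ is the unique time at which the characteristic starting from $x_b$ reaches $x_N$. Consequently $\tau_j(x_b)$ is characterized by
$$G_j(\tau_j(x_b)) = x_N - x_b, \qquad j=1,2,$$
which immediately yields the key identity $G_1(\tau_1(x_b)) = G_2(\tau_2(x_b))$.

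Assuming without loss of generality that $\tau_1(x_b) \le \tau_2(x_b)$, I would split $G_2(\tau_2(x_b)) = G_2(\tau_1(x_b)) + \int_{\tau_1(x_b)}^{\tau_2(x_b)} g_1(v_2(s))\,ds$ and rearrange the identity above into
$$\int_{\tau_1(x_b)}^{\tau_2(x_b)} g_1(v_2(s))\,ds = \int_0^{\tau_1(x_b)} \big(g_1(v_1(s)) - g_1(v_2(s))\big)\,ds.$$
Then I would estimate the two sides separately. On the left, $g_1 \ge \min(g_1)>0$ gives the lower bound $\min(g_1)\,|\tau_2(x_b) - \tau_1(x_b)|$. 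On the right, the Lipschitz continuity of $g_1$ gives $|g_1(v_1(s)) - g_1(v_2(s))| \le {\rm Lip}(g_1)\,|v_1(s)-v_2(s)|$, whence the modulus of the right-hand side is at most ${\rm Lip}(g_1)\int_0^{\tau_1(x_b)} |v_1(s)-v_2(s)|\,ds$. Since in the range of interest the smaller hitting time satisfies $\tau_1(x_b) = \tau_{min}(x_b) \le T$ (precisely the case where the characteristics reach $x_N$ before $T$, e.g.\ $x_b \in J_{min}$ in the nonlinear estimate), I may enlarge the domain of integration to $[0,T]$. Combining the two bounds and dividing by $\min(g_1)$ delivers the claim.

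The argument is essentially forced once the defining relation is written down, so I do not anticipate a genuine obstacle; the two points deserving care are (i) justifying that $\tau_j(x_b)$ is the actual hitting time rather than merely a supremum, which rests on the strict monotonicity of $G_j$ coming from $g_1>0$, and (ii) ensuring that the smaller of the two hitting times does not exceed $T$, so that the integral over $[0,\tau_1(x_b)]$ can be controlled by $\int_0^T |v_2-v_1|\,ds$. The symmetric case $\tau_2(x_b) \le \tau_1(x_b)$ is identical after interchanging the indices, and since the final bound is symmetric in $v_1$ and $v_2$ this also accommodates the crossing of characteristics noted in Remark \ref{Remark_crosschar}.
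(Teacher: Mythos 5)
Your proof is correct and is essentially the paper's own argument: the paper likewise bounds $|\tau_2-\tau_1|$ by the residual distance $|G_2(\min(\tau_1,\tau_2))-G_1(\min(\tau_1,\tau_2))|$ divided by $\min(g_1)$ and then controls that distance by ${\rm Lip}(g_1)\int_0^T|v_2(s)-v_1(s)|\,ds$; your rearrangement via $G_1(\tau_1(x_b))=G_2(\tau_2(x_b))=x_N-x_b$ is just a more explicit writing of the same two inequalities. The two caveats you flag (strict monotonicity of $G_j$ from $g_1>0$, and $\min(\tau_1,\tau_2)\le T$ on the relevant range of $x_b$) are exactly the points the paper uses implicitly.
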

\begin{proof}

\begin{eqnarray*}
|\tau_2(x_b) - \tau_1(x_b)| \le 
\frac {|(x_b+G_2(\min(\tau_1,\tau_2))) - (x_b + G_1(\min(\tau_1,\tau_2)))|}{\min(g_1)} &\le&
\\ \frac {1}{\min(g_1)} \int_0^{\min(\tau_1,\tau_2)} |g_1(v_2(s)) - g_1(v_1(s))|ds &\le&
 \frac {{\rm Lip}(g_1)}{\min(g_1)} \int_0^T |v_2(s) - v_1(s)|ds.
\end{eqnarray*}
\end{proof}

\begin{proposition}
\label{Est_9206}
Let $f^1,f^2$ be two bounded functions defined on interval $[0,T]$. Denote \\ $\sup(f) := \max(\sup(|f^1|), \sup(|f^2|)).$ Then
\begin{equation*}
\left| e^{\int_r^T f^1(s)ds} - e^{\int_r^T f^2(s)ds}\right| \le e^{3T \sup(f)}\int_r^T |f^2(s) - f^1(s)|ds.
\end{equation*}
\end{proposition}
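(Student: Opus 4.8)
The plan is to reduce the estimate to Proposition \ref{Est_basic} ii) by setting $x := \int_r^T f^1(s)\,ds$ and $y := \int_r^T f^2(s)\,ds$, so that the left-hand side of the claim becomes exactly $|e^x - e^y|$. Part ii) of that proposition then yields at once
\begin{equation*}
\left| e^{\int_r^T f^1(s)ds} - e^{\int_r^T f^2(s)ds}\right| \le |x - y|\, e^{\max(x,y)}.
\end{equation*}
It remains to bound the two factors on the right separately.

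First I would estimate $|x-y|$. By linearity of the integral and the triangle inequality for integrals,
\begin{equation*}
|x - y| = \left| \int_r^T (f^1(s) - f^2(s))\,ds \right| \le \int_r^T |f^2(s) - f^1(s)|\,ds,
\end{equation*}
which is precisely the integral appearing on the right-hand side of the statement.

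Next I would bound the exponential factor. Since $\max(x,y) \le \max(|x|,|y|)$ and, for $k \in \{1,2\}$, one has $\left|\int_r^T f^k(s)\,ds\right| \le (T-r)\sup(|f^k|) \le T\sup(f)$, it follows that $\max(x,y) \le T\sup(f)$, and hence $e^{\max(x,y)} \le e^{T\sup(f)} \le e^{3T\sup(f)}$, the last step using $T\sup(f) \ge 0$. Combining the two bounds gives the claim.

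There is essentially no obstacle here: the constant $3$ in the exponent is deliberately wasteful, since a factor of $1$ already suffices, and this slack turns the argument into a direct application of Proposition \ref{Est_basic}. The only point requiring minimal care is that $\max(x,y)$ refers to the signed values of the integrals, so one passes through $\max(|x|,|y|)$ in order to cover the case in which $f^1$ and $f^2$ take negative values; the generous constant makes even this routine.
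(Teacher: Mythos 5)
Your proof is correct, but it takes a different route from the paper's. The paper factors out one exponential, writing the difference as $e^{\int_r^T f^1}\bigl(1-e^{\int_r^T(f^2-f^1)}\bigr)$, and then applies the bound $|1-e^z|\le |z|e^{|z|}$ to $z=\int_r^T(f^2-f^1)$; this produces the exponent $T(2\sup|f^1|+\sup|f^2|)$, which is exactly where the constant $3$ in the statement comes from. You instead apply the mean-value-type inequality $|e^x-e^y|\le|x-y|e^{\max(x,y)}$ directly to the two signed integrals, obtaining the sharper exponent $T\sup(f)$ and then discarding the slack. Your route is shorter and gives a better constant; the paper's route explains the stated constant. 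One small point of care: Proposition \ref{Est_basic}~ii) is stated in the paper only for $x,y\ge 0$, while your $x=\int_r^T f^1$ and $y=\int_r^T f^2$ may be negative. You flag this, but passing to $\max(|x|,|y|)$ only fixes the bound on the exponent, not the applicability of the cited inequality itself; you should note that ii) in fact holds for all real $x,y$ (by the mean value theorem, $e^x-e^y=e^{\xi}(x-y)$ with $\xi$ between $x$ and $y$, so $|e^x-e^y|\le e^{\max(x,y)}|x-y|$), after which your argument is complete.
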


\begin{proof}
\begin{eqnarray*} 
\left| e^{\int_r^T f^1(s)ds} - e^{\int_r^T f^2(s)ds}\right| = \left| e^{\int_r^T f^1(s)ds} \left(1 - e^{\int_r^T (f^2(s) - f^1(s))ds} \right) \right| &\le &\\ 
e^{T \sup(|f^1|)} \left(\int_r^T |f^2(s) - f^1(s)|ds \right) e^{T (\sup(|f^1|)+\sup(|f^2|))} &\le &\\
e^{T (2\sup(|f^1|) + \sup(|f^2|))}\int_r^T |f^2(s) - f^1(s)|ds.
\end{eqnarray*}
\end{proof}


\begin{thebibliography}{2}

\footnotesize

\bibitem{GILBERT}
S.~Gilbert, {Developmental Biology}, Sinauer Associates Inc., tenth edition, 2013.

\bibitem{Neurogenesis} J.B.~Aimone, W.~Deng, F.H.~Gage, {Adult neurogenesis: integrating theories and separating functions}, Trends Cogn. Sci. 14(7) (2010) 325-337.

\bibitem{Kriegstein}
A.~Kriegstein, A.~Alvarez-Buylla, {The Glial Nature of Embryonic and Adult Neural Stem Cells}, Annu. Rev. Neurosci. 32 (2009) 149-184. 

\bibitem{GotoKaneko}
{Y. Goto, K. Kaneko}, {Minimal model for stem-cell differentiation,} Phys. Rev. E 88 032718 (2013). 
 
\bibitem{VillaniSerra}
{M. Villani, R. Serra},  {On the dynamical properties of a model of cell differentiation,} EURASIP J. Bioinform. Syst. Biol. 2013:4 (2013).

\bibitem{Bodaker}
{M. Bodaker, Y. Louzoun, E. Mitrani}, {Mathematical Conditions for Induced Cell
Differentiation and Trans-differentiation in Adult Cells}, B. Math. Biol. 75 (2013) 819-844. 

\bibitem{Corey} {M. Kimmel, S. Corey,} {Stochastic hypothesis of transition from inborn neutropenia to AML: interactions of cell population dynamics and population genetics,} {Front. Oncol.} 3:89 (2013). 

\bibitem{Sehl}{M.~Sehl, Z.~Hua, J.~Sinsheimer, K.~Lange,} {Extinction models for cancer stem cell therapy,} Math. Biosci. 234(2) (2011) 132-146.

\bibitem{MacMc} {H. MacMillan, M. McConnell,} {Seeing beyond the average cell: branching process models of cell proliferation, differentiation, and death during mouse brain development,} Theor. Biosci. 130 (2011) 31-43.


\bibitem{Adimy0}
{M. Adimy, F. Crauste,} {Delay Differential Equations and Autonomous Oscillations in Hematopoietic Stem Cell Dynamics Modelling}, Math. Model. Nat. Phenom. 7(6) (2012) 1-22.

\bibitem{Arino}
O.~Arino, M.~Kimmel, {Stability analysis of models of cell production systems}, {Math. Model. 7(9-12) (1986) 1269-1300.}

\bibitem{Kim} M.~Doumic, P. Kim, B. Perthame, {Stability analysis of a simplified yet complete model for chronic myelogenous leukemia}, Bull. Math. Biol. 72 (2010) 1732-1759.

\bibitem{GranerGlazier}
F. Graner, J. Glazier, {Simulation of Biological Cell Sorting Using a Two-Dimensional Extended Potts Model,} Phys. Rev. Lett. 69 (1992) 2013-2016. 

\bibitem{Wang}
M-X. Wang, Y-J. Li, P-Y. Lai, C.K. Chan, {Model on cell movement, growth, differentiation and de-differentiation: Reaction-diffusion equation and wave propagation,} Eur. Phys. J. E 36(6):65 (2013). 

\bibitem{Lo} W. Lo, C. Chou, K. Gokoffski, F. Wan, A. Lander, A. Calof, Q. Nie, {Feedback regulation in multistage cell lineages,} Math. Biosci. Eng. 6 (2009) 59-82.

\bibitem{AMC}
A. Marciniak-Czochra, T. Stiehl, A.D. Ho, W. Jaeger, W. Wagner, {Modeling asymmetric cell division in hematopoietic stem cells - regulation of self-renewal is essential for efficient repopulation,} {Stem Cells Dev.} 18 (2009) 377--385.
\bibitem{Nakata} Y. Nakata, Ph. Getto, A. Marciniak-Czochra, T. Alarc\'on, {Stability analysis of multi-compartment models for cell production systems,} J  Biol. Dyn. 6 Suppl 1 (2012) 2-18. 
\bibitem{Stiehl}
T.~Stiehl, A.~Marciniak-Czochra, {Characterization of stem cells using mathematical models of multistage cell lineages,} Math. Comp. Model. 53 (2010) 1505-1517.


\bibitem{Adimy}
{M. Adimy, F. Crauste, S. Ruan,} {A mathematical study of the hematopoiesis process with applications to chronic myelogenous leukemia,} SIAM J. Appl. Math., 65 (2005) 1328-1352. 

\bibitem{Belair}
J. Belair, M.C. Mackey, J. Mahaffy, {Age structured and two-delay models for erythropoiesis,} {Math. Biosci.} 128 (1995) 317-346.

\bibitem{Colijn}
C. Colijn, M.C. Mackey, {A mathematical model of hematopoiesis-I. Periodic chronic myelogenous leukemia}, J. Theor. Biol. 237(2005) 117-132. 
\bibitem{diekmanngetto} {O.~Diekmann, Ph.~Getto,} {Boundedness, global existence and continuous dependence for nonlinear dynamical systems describing physiologically structured populations}, J. Differ. Equations 215 (2005) 268--319.

\bibitem{Spalding} K. Spalding, O. Bergmann, K. Alkass, S. Bernard, M. Salehpour, H. Huttner, E. Bostr\"om, I. Westerlund, C. Vial, B. Buchholz, G. Possnert, D. Mash, H. Druid, J. Frisen, {Dynamics of Hippocampal Neurogenesis in Adult Humans,} Cell 153(6) (2013) 1219-1227. 

\bibitem{Doumic} M.~Doumic, A.~Marciniak-Czochra, B.~Perthame, J.~Zubelli, {Structured population model of stem cell differentiation}, {SIAM J. Appl. Math.} 71 (2011) 1918-1940.

\bibitem{OUR} P. Gwiazda, G. Jamr\'oz, A. Marciniak-Czochra, {Models of discrete and continuous cell differentiation in the framework of transport equation,} {SIAM J. Math. Anal.} 44 (2012) 1103-1133. 

\bibitem{EVANS_GARIEPY}
L.~Evans, R.~Gariepy, {Measure theory and fine properties of functions}, CRC Press, 1992.


\bibitem{VILLANI}
C.~Villani, {Topics in optimal transportation,} Graduate Studies in Mathematics, 58, American Mathematical Society, Providence, RI, 2003.

\bibitem{Cohn}
{D. Cohn,} {Measure Theory,} Birkh\"auser, Boston, 1980. 

\bibitem{Lojasiewicz} S. \L{}ojasiewicz, {An Introduction to the Theory of Real Functions}, John Wiley\&Sons, Chichester, New York, Brisbane, Toronto, Singapore, 1988.

\bibitem{MyPHD}
{G. Jamr\'oz,} {Structured Population Models of Cell Differentiation,} PhD thesis, 2014, submitted, URL:  \url{http://www.mimuw.edu.pl/~jamroz/JamrozPhD.pdf}.

\bibitem{ambrosio}
L.~Ambrosio, N.~Gigli, G.~Savar\'e, {Gradient flows in metric spaces and in the space of probability measures}, Birkh\"auser, ETH Lecture Notes in Mathematics, 2005.

\bibitem{CrippaPHD}
G. Crippa, {The flow associated to weakly differentiable vector fields,} PhD Thesis, Scuola Normale Superiore di Pisa, Universit\"at Zurich, 2007, URL: \url {http://user.math.uzh.ch/delellis/uploads/media/Gianluca.pdf}. 

\bibitem{BressanHyp}
A. Bressan, {Hyperbolic Systems of Conservation Laws. The One-Dimensional Cauchy Problem,} Oxford lecture series in mathematics and its applications, 2000. 


\end{thebibliography}
\end{document}